\newtheorem{theorem}{Theorem}[section]
\newtheorem{corollary}[theorem]{Corollary}
\newtheorem{lemma}[theorem]{Lemma}
\newtheorem{proposition}[theorem]{Proposition}
\theoremstyle{definition}
\newtheorem{definition}[theorem]{Definition}
\theoremstyle{remark}
\newtheorem{remark}[theorem]{Remark}
\newtheorem{example}[theorem]{Example}
\DeclareMathOperator{\dist}{dist}
\def\reals{\mathbf{R}}
\def\complexes{\mathbf{C}}
\def\naturalnumbers{\mathbf{N}}
\def\integers{\mathbf{Z}}
\def\E{\mathbb{E}\,} \def\P{\mathbb{P}}
\def\T{\mathcal{T}}
\def\M{\mathcal{M}}
\def\H{\mathbf{H}}
\def\mhyp{m_{hyp}}
\def\W{W}
\def\Wper{W_{per}}
\newcommand{\mutorus}[1]{\mu_{#1,S^1}}
\author[J.  Junnila]{Janne Junnila}
\thanks{The first author was supported by the Doctoral Programme in Mathematics and Statistics at University of Helsinki}
\address{University of Helsinki, Department of Mathematics and Statistics, P.O. Box 68, FIN-00014 University of Helsinki, Finland}
\email{janne.junnila@helsinki.fi}
\author[E. Saksman]{Eero Saksman}
\thanks{The second author was supported by The Finnish Centre of Excellence (CoE) in Analysis and Dynamics Research}
\address{University of Helsinki, Department of Mathematics and Statistics, P.O. Box 68, FIN-00014 University of Helsinki, Finland}
\email{eero.saksman@helsinki.fi}
\keywords{Multiplicative chaos, uniqueness, critical temperature}
\subjclass[2010]{Primary 60G57, 60G15, Secondary 60K35, 60G60}
\title{The uniqueness of the Gaussian multiplicative chaos revisited}
\begin{document}

\begin{abstract}
We consider Gaussian multiplicative chaos measures defined in a general setting of metric measure spaces. Uniqueness results are obtained, verifying that different sequences of approximating Gaussian fields lead to the same chaos measure. Specialized to Euclidean spaces, our setup covers both the subcritical chaos and the critical chaos, actually extending to all non-atomic Gaussian chaos measures.
\end{abstract}

\maketitle

\section{Introduction}\label{sec:introduction}

The theory of multiplicative chaos was created by Kahane \cite{kahane1985,kahane1987positive} in the 1980's in order to obtain a continuous counterpart of the multiplicative cascades, which were proposed by Mandelbrot in early 1970's as a model for turbulence. During the last 10 years there has been a new wave of interest on multiplicative chaos, due to e.g.\ its important connections to Stochastic Loewner Evolution \cite{astala2011random,sheffield2010conformal,duplantier2011schramm}, quantum gravity \cite{duplantier2009duality,duplantier2011liouville,berestycki2014liouville,miller2013quantum}, models in finance and turbulence \cite[Section 5]{rhodes2014gaussian}, and the hypothetical statistical behaviour of the Riemann zeta function over the critical line \cite{fyodorov2014freezing}.

In Kahane's original theory one considers a sequence of a.s.\ continuous and centered Gaussian fields $X_n$ that can be thought of as approximations of a (possibly distribution valued) Gaussian field $X$. The fields are defined on some metric measure space $(\T,\lambda)$ and the increments $X_{n+1}-X_n$ are assumed to be independent. One may then define the random measures $\mu_n$  on $\T$ by setting
$$
\mu_n(dx):=\exp(X_n(x)-\frac{1}{2}\E X_n(x)^2)\lambda(dx).
$$
In this situation basic martingale theory verifies that almost surely there exists a (random) limit measure $\mu=\lim_{n\to\infty}\mu_n$, where the convergence is understood in the ${\rm weak^*}$-sense. The measure $\mu$ is called the \emph{multiplicative chaos} defined by $X$ (or rather by the sequence ($X_n$)), and Kahane shows that under suitable conditions the limit does not depend on the choice of the approximating sequence ($X_n$). However, the limit may well reduce to the zero measure almost surely.

We next recall some of the most important cases of multiplicative chaos in the basic setting where $\T$ is a subset of a Euclidean space, say $\T=[0,1]^d$, and $\lambda$ is the Lebesgue measure. Especially we assume that the limit field $X$ is log-correlated, i.e.\ it has the covariance
\begin{equation}\label{eq:log_covariance}
C_X(x,y)=2d\beta^2\log |x-y|+ G(x,y),\qquad x,y\in \T,
\end{equation}
where $G$ is a continuous and bounded function.
This means that $X$ is essentially a multiple of (the trace of) a Gaussian free field (GFF).

Assuming that the $X_n$ are nice approximations of the field $X$ as explained above, Kahane's theory yields that in case $\beta\in (0,1)$ the convergence $\mu_n \overset{w^*}{\to} \mu_\beta$ takes place almost surely and the obtained chaos $\mu_\beta$ is non-trivial.
It is an example of \emph{subcritical Gaussian chaos}, and, as we shall soon recall in more detail, in this normalisation $\beta=1$ appears as a critical value.

In order to give a more concrete view of the chaos we take a closer look at a particularly important example of approximating Gaussian fields in the case where $d=1$ and $\mu$ is the so-called exactly scale invariant chaos due to Bacry and Muzy \cite{bacry2003log}, \cite[p. 15]{rhodes2014gaussian}. Consider the hyperbolic white noise $W$ in the upper half plane $\reals^2_+$ so that
$\E \W(A_1)\W(A_2)=m_{\text{hyp}}(A_1 \cap A_2)$ for Borel subsets $A_1,A_2\in \reals^2_+$ with compact closure in $\reals^2_+$. Above $dm_{\text{hyp}}=y^{-2}dx \, dy$ denotes the hyperbolic measure in the upper half plane. For every $t>0$ consider the set
\begin{equation}\label{eq:At}
A_t(x) := \{(x',y')\in \reals^2_+ : y' \ge \max(e^{-t},2|x' - x|) \text{\ and } |x' - x|\leq 1/2\}
\end{equation}
and define the field $X_t$ on $[0,1]$ by setting
\[X_t(x) := \sqrt{2d}\W(A_{t}(x)).\]
Note that the sets $A_t(x)$ are horizontal translations of the set $A_t(0)$.
One then defines the subcritical exactly scale invariant chaos by setting
\begin{equation}\label{eq:scr}
  d\mu_\beta(x)\overset{\textrm{a.s}}{:=}\lim_{t\to\infty}\exp\big( \beta X_{t}(x)-\frac{\beta^2}{2}\E (X_{t}(x))^2\big) \, dx \quad \text{for } \beta < 1.
\end{equation}

If $\beta = 1$, the above limit equals the zero measure almost surely. To construct the exactly scaling chaos measure at criticality $\beta=1$, one has to perform a non-trivial normalization as follows:
\begin{equation}\label{eq:cr}
  d\mu_{1}(x):=\lim_{t\to\infty}\sqrt{t}\exp\big( X_{t}(x)-\frac{1}{2}\E (X_{t}(x))^2\big) \, dx,
\end{equation}
where the limit now exists in probability.

The need of a nontrivial normalisation at the critical parameter value in \eqref{eq:cr} has been observed in many analogous situations before, e.g. \cite{bramson1983convergence,yor2001some}. A convergence result analogous to \eqref{eq:cr} was proven by Aidekon and Shi in the important work \cite{aidekon2014seneta} in the case of Mandelbrot chaos measures that can be thought of as a discrete analogue of continuous chaos. Independently C. Webb \cite{webb2011exact} obtained the corresponding result (with convergence in distribution) for the Gaussian cascades (\cite{aidekon2014seneta} and \cite{webb2011exact} considered the total mass, but the convergence of the measures can then be verified without too much work). Finally, Duplantier, Rhodes, Sheffield and Vargas \cite{duplantier2014critical,duplantier2014renormalization} established \eqref{eq:cr} for a class of continuous Gaussian chaos measures including the exactly scaling one. We refer to \cite{rhodes2014gaussian,duplantier2014log} for a much more thorough discussion of chaos measures and their applications, as well as for further references on the topic.

An important issue is to understand when the obtained chaos measure is independent of the choice of the approximating fields $X_n$. As mentioned before, Kahane's seminal work contained some results in this direction. Robert and Vargas \cite{robert2010gaussian} addressed the uniqueness question in the case of subcritical log-correlated fields \eqref{eq:log_covariance} for convolution approximations $X_n = \phi_{\varepsilon_n} * X$. 
Duplantier's and Sheffield's paper \cite{duplantier2011liouville} gives uniqueness results for particular approximations  of the 2-dimensional GFF. More general results developing the method of \cite{robert2010gaussian} are contained in the reviews \cite{rhodes2014gaussian} due to Rhodes and Vargas, and in \cite{david2015liouville} the method is also applied in a special case of critical chaos. Their conditions are very similar to ours in this paper.  Another approach is contained in the paper of Shamov \cite[Sections 7, 8]{shamov2014gaussian}. The techniques of the latter paper are based on an interesting new characterisation of chaos measures, which is applicable in the subcritical range. Finally, Berestycki \cite{Be2015} has a new simple proof for convolution approximations, again in the subcritical regime.

In the present paper we develop an alternative approach to this important question of independence on the approximation fields used. Our simple idea uses a specifically tailored auxiliary field added to the original field in order to obtain comparability directly from Kahane's convexity inequality, and the choice is made so that in the limit the effect of the auxiliary field vanishes. The approach is outlined before the actual proof in the beginning of Section~\ref{sec:auxiliary}. One obtains a unified result that applies in general to chaos measures obtained via an arbitrary normalization, the only requirement is that the chaos measure is non-atomic almost surely. Especially, the case of the classical critical chaos is covered, and moreover our results apply also to a class of chaos measures that lie between the critical and supercritical ones, which one expects to be useful in the study of finer properties of the critical chaos itself.

Our basic result considers the following situation: Let $(X_n)$ and $(\widetilde{X}_n)$ be two sequences of H\"older-regular Gaussian fields (see Section~\ref{sec:notation} for the precise definition) on a compact doubling metric space $(\T,d)$. Assume that for each $n \ge 1$ we have a non-negative Radon reference measure $\rho_n$ defined on $\T$. Define the measures
\[d\mu_n(x) := e^{X_n(x) - \frac{1}{2} \E[X_n(x)^2]} \, d\rho_n(x)\]
for all $n \ge 1$. The measures $\widetilde{\mu}_n$ are defined analogously by using the fields $\widetilde{X}_n$ instead.

\begin{theorem}\label{thm:uniqueness}
  Let $C_n(x,y)$ and $\widetilde{C}_n(x,y)$ be the covariance functions of the fields $X_n$ and $\widetilde{X}_n$ respectively. Assume that the random measures $\widetilde{\mu}_n$ converge in distribution to an almost surely non-atomic random measure $\widetilde{\mu}$ on $\T$. Moreover, assume that the covariances $C_n$ and $\widetilde{C}_n$ satisfy the following two conditions:
  There exists a constant $K > 0$ such that
  \begin{equation}\label{ehto1}
    \sup_{x,y \in \T} |C_n(x,y) - \widetilde{C}_n(x,y)| \le K < \infty \quad \text{for all } n \ge 1,
  \end{equation}
  and
  \begin{equation}\label{ehto2}
    \lim_{n \to \infty} \sup_{d(x,y) > \delta} |C_n(x,y) - \widetilde{C}_n(x,y)| = 0 \quad \text{for every } \delta > 0.
  \end{equation}
Then the measures $\mu_n$ converge in distribution to the same random measure $\widetilde{\mu}$.
\end{theorem}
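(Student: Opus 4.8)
The plan is to prove convergence in distribution by showing that the Laplace functionals converge, i.e.\ that $\E e^{-\mu_n(f)} \to \E e^{-\widetilde{\mu}(f)}$ for every continuous $f \ge 0$ on $\T$; since $t \mapsto e^{-t}$ is convex and bounded, this is precisely the setting in which Kahane's convexity inequality applies, and it determines the law of a random measure. The device is a single auxiliary centered Gaussian field $G = G_{\delta,\varepsilon}$, independent of all the $X_n$ and $\widetilde{X}_n$, whose covariance $B(x,y) \ge 0$ is chosen so that $B(x,y) \ge K$ whenever $d(x,y) \le \delta$ and $B(x,y) \ge \varepsilon$ for all $x,y$. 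By \eqref{ehto1} the first property forces $B \ge |C_n - \widetilde{C}_n|$ near the diagonal for \emph{every} $n$, while by \eqref{ehto2} the second forces $B \ge |C_n - \widetilde{C}_n|$ away from the diagonal once $n \ge n_0(\delta,\varepsilon)$. Writing $M(x) = e^{G(x) - \frac{1}{2}\E G(x)^2}$, $\mu_n^B := M\mu_n$ and $\widetilde{\mu}_n^B := M\widetilde{\mu}_n$, the field $\widetilde{X}_n + G$ has covariance $\widetilde{C}_n + B \ge C_n$ and the field $X_n + G$ has covariance $C_n + B \ge \widetilde{C}_n$, so Kahane's inequality yields, for $n \ge n_0(\delta,\varepsilon)$,
\[ \E e^{-\mu_n(f)} \le \E e^{-\widetilde{\mu}_n^B(f)}, \qquad \E e^{-\widetilde{\mu}_n(f)} \le \E e^{-\mu_n^B(f)}. \]

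The heart of the matter is that the factor $M$ is asymptotically invisible when it multiplies a non-atomic measure. Conditionally on such a measure $\nu$, the variable $\nu^B(f) = \int M f\,d\nu$ has mean $\nu(f)$ and conditional variance $\iint f(x)f(y)(e^{B(x,y)}-1)\,d\nu(x)\,d\nu(y)$. Splitting at the scale $r = r(\delta,\varepsilon)$ beyond which $B$ has decayed to $\le 2\varepsilon$, the near-diagonal part is at most $(e^{\sup_x B(x,x)}-1)\|f\|_\infty^2 \int \nu(\mathcal{B}(x,r))\,d\nu(x)$, and the remaining part is at most $(e^{2\varepsilon}-1)\|f\|_\infty^2\,\nu(\T)^2$. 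Letting $\delta \to 0$ (which shrinks $r$ to $0$) makes the first term vanish, because $\nu \otimes \nu$ assigns mass zero to the diagonal when $\nu$ is non-atomic; then letting $\varepsilon \to 0$ kills the second. Hence $\nu^B(f) \to \nu(f)$ in probability. Applying this with $\nu = \widetilde{\mu}$ gives the upper bound at once: fixing $(\delta,\varepsilon)$ and letting $n \to \infty$ in the first inequality (using $\widetilde{\mu}_n^B \Rightarrow \widetilde{\mu}^B$, which follows from $\widetilde{\mu}_n \Rightarrow \widetilde{\mu}$ and the independence of $G$), then letting $(\delta,\varepsilon)\to 0$ and invoking the non-atomicity of $\widetilde{\mu}$, yields $\limsup_n \E e^{-\mu_n(f)} \le \E e^{-\widetilde{\mu}(f)}$.

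The reverse inequality is where the work lies. Letting $n \to \infty$ in the second Kahane inequality gives $\E e^{-\widetilde{\mu}(f)} \le \liminf_n \E e^{-\mu_n^B(f)}$, so it remains to replace $\mu_n^B$ by $\mu_n$, and for this the vanishing of the auxiliary factor must hold \emph{uniformly in $n$}: one needs an equi-non-atomicity statement for the family $(\mu_n)$ together with tightness of the total masses $\mu_n(\T)$. Both are transferred from the $\widetilde{\mu}_n$-side by comparison. A direct first-moment computation using \eqref{ehto1} gives $\E\!\iint_{d(x,y)\le r} d\mu_n\,d\mu_n = \iint_{d(x,y)\le r} e^{C_n}\,d\rho_n\,d\rho_n \le e^K\, \E\!\iint_{d(x,y)\le r} d\widetilde{\mu}_n\,d\widetilde{\mu}_n$, while the second Kahane inequality, read as $\mu_n^B(g) \preceq_{\mathrm{st}} \widetilde{\mu}_n(g)$, controls the mass of $\mu_n$ on small balls by that of the convergent family $\widetilde{\mu}_n$. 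Feeding this uniform control back into the conditional-variance estimate of the previous paragraph shows that $\E e^{-\mu_n^B(f)} - \E e^{-\mu_n(f)} \to 0$ as $(\delta,\varepsilon)\to 0$ uniformly in $n$, whence $\liminf_n \E e^{-\mu_n(f)} \ge \E e^{-\widetilde{\mu}(f)}$, closing the argument.

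I expect the main obstacle to be precisely this uniform, quantitative transfer of non-atomicity and tightness from the limit $\widetilde{\mu}$ to the whole sequence $(\mu_n)$: the pointwise statement $\nu^B(f) \to \nu(f)$ is soft, but turning it into a bound uniform in $n$ demands genuine equi-regularity of $(\mu_n)$, which has to be extracted from the hypotheses through the comparison inequalities rather than assumed. A secondary technical point is the construction, on a general compact doubling metric space, of a centered Gaussian field whose covariance is non-negative, bounded, exceeds $K$ on the $\delta$-neighbourhood of the diagonal, and decorrelates at scale $\delta$; on $\R^d$ a suitably scaled Gaussian kernel serves, and in the doubling setting one assembles such a kernel from the H\"older-regular fields available in Section~\ref{sec:notation}.
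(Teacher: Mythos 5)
Your overall architecture --- an auxiliary decorrelating Gaussian field added via Kahane's inequality, with non-atomicity used to make the auxiliary factor vanish in the limit --- is the same as the paper's, and your first direction, $\limsup_n \E e^{-\mu_n(f)} \le \E e^{-\widetilde{\mu}(f)}$, is sound: there the auxiliary factor $M$ lands on the $\widetilde{\mu}_n$ side, so only the given hypotheses (convergence of $\widetilde{\mu}_n$ and non-atomicity of $\widetilde{\mu}$) are needed. The genuine gap is in the reverse direction, exactly where you anticipate trouble. Your proposed transfer of ``equi-non-atomicity'' rests on the bound $\E\big[(\mu_n\otimes\mu_n)(\{d(x,y)\le r\})\big] = \iint_{d(x,y)\le r}e^{C_n}\,d\rho_n\,d\rho_n \le e^{K}\,\E\big[(\widetilde{\mu}_n\otimes\widetilde{\mu}_n)(\{d(x,y)\le r\})\big]$. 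The computation is correct, but the right-hand side is not controlled by the hypotheses: convergence in distribution of $\widetilde{\mu}_n$ to a non-atomic limit gives no bound whatsoever on first moments of the product measures near the diagonal, and in precisely the cases the theorem is designed to cover these moments blow up. For a critical log-correlated field with $d\rho_n=\sqrt{\log n}\,dx$ one has $\iint_{d(x,y)\le r}e^{\widetilde{C}_n}\,d\rho_n\,d\rho_n\to\infty$ as $n\to\infty$ for every fixed $r>0$; even in the subcritical case \eqref{eq:log_covariance} with $1/\sqrt{2}\le\beta<1$ the quantity $\iint_{d(x,y)\le r}e^{\widetilde{C}_n}\,dx\,dy$ is unbounded in $n$ (the limit kernel $|x-y|^{-2d\beta^2}$ is non-integrable over the diagonal). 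So your comparison reads $\infty\le e^K\cdot\infty$ and yields nothing. A secondary flaw in the same step: Kahane's inequality compares expectations of convex (or concave) functionals only; it does not give the stochastic domination $\mu_n^B(g)\preceq_{st}\widetilde{\mu}_n(g)$ that you invoke.

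What is missing is the moment-free substitute that the paper builds, and it is softer than what you aim for: the paper never proves any uniform-in-$n$ statement. It first transfers tightness from $(\widetilde{\mu}_n)$ to $(\mu_n)$ (Lemma~\ref{lemma:convergence}) by applying Kahane to a slowly growing concave function $h$ tailored to the tight sequence (Lemma~\ref{lemma:auxiliary_function}) --- a device that works with no moment assumptions at all. It then passes, via Prokhorov, to a subsequential limit $\mu$ of $(\mu_n)$ and proves that \emph{any such limit is non-atomic} (Lemma~\ref{lemma:diagonal_approximation}), by a product-space Kahane comparison on $\T\times\T$, again with the function $h$, tested against bump functions concentrated along the diagonal, followed by Skorokhod representation and uniform integrability of $h$. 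Once the subsequential limit $\mu$ is known to be non-atomic, the vanishing of the auxiliary factor need only be applied to the fixed measure $\mu$ --- a soft, per-limit statement of the same kind as your first direction --- rather than uniformly along the sequence $(\mu_n)$. Your proof can be repaired by replacing the uniform-in-$n$ claim with this subsequence scheme, but then the non-atomicity of subsequential limits is an unavoidable and genuinely nontrivial ingredient which your proposal lacks and which the moment computation cannot supply; the concave-function machinery of Lemmas~\ref{lemma:auxiliary_function}--\ref{lemma:diagonal_approximation} (rather than Laplace functionals alone) is what fills it. The remaining point you defer --- constructing the field with covariance $B$ on a general compact doubling space --- is handled by the partition-of-unity construction of Lemma~\ref{lemma:construction_of_Z} and is indeed secondary.
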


\begin{remark}\label{rmk:uniqueness_non_compact}
  For simplicity we have stated the above theorem and will give the proof in the setting of a compact space $\T$.
  Similar results are obtained for non-compact $\T$ by standard localization.
  For example assume that $\T$ has an exhaustion $\T = \bigcup_{n=1}^\infty K_n$ with compacts $K_1 \subset K_2 \subset \dots \subset \T$, such that every compact $K \subset \T$ is eventually contained in some $K_n$.
  Then if the assumptions of Theorem~\ref{thm:uniqueness} are valid for the restrictions to each $K_n$, the claim also holds for $\T$, where now weak convergence is defined using compactly supported test functions.
\end{remark}

\noindent The proof of the above theorem is contained in Section~\ref{sec:proof}, where it is also noted that one may somewhat loosen the condition \eqref{ehto1}, see Remark~\ref{rmk:loosen}. We refer to Section~\ref{sec:notation} for precise definitions of convergence in the space of measures and other needed prerequisities.

Section~\ref{sec:inprobability} addresses the question when the convergence in Theorem~\ref{thm:uniqueness} can be lifted to convergence in probability (or in $L^p$). The basic underlying assumption is that this is the case for some other approximation sequence that has a martingale structure -- a condition which is often met in applications.

In Section~\ref{sec:auxiliary} we state some consequences for convolution approximations (see Corollaries~\ref{cor:convolutions} and~\ref{cor:convolutions_circle}). Moreover, convergence results are stated for mildly perturbed fields.

Finally, Section~\ref{sec:application} illustrates the use of the results of the previous sections. This is done via taking a closer look at the fundamental critical chaos on the unit circle, obtained from the GFF defined via the Fourier series
\[X(x)=2\sqrt{\log 2} A_0 + \sqrt{2}\sum_{k=1}^\infty k^{-1/2}\big(A_k\sin (2\pi k x)+B_k\cos (2\pi k x)\big)\quad \textrm{for}\quad x\in [0,1),\]
where the $A_n, B_n$ are independent standard Gaussians. In \cite{astala2011random} the corresponding subcritical Gaussian chaos was constructed using martingale approximates defined via the periodic hyperbolic white noise. We shall consider four different approximations of $X$:

\begin{enumerate}
  \item[1.] $X_{1,n}$ is the approximation of $X$ obtained by cutting the periodic hyperbolic white noise construction of $X$ on the level $1/n$.

  \item[2.] $X_{2,n}(x) = 2\sqrt{\log 2}A_0 + \sqrt{2}\sum_{k=1}^n k^{-1/2}\big(A_k\sin (2\pi k x)+B_k\cos(2\pi k x)\big)$ for $x \in [0,1)$.

  \item[3.] $X_{3,n}=\phi_{1/n}*X,$ where $\phi$ is a mollifier function defined on $\T$ that satisfies some weak conditions.

  \item[4.] $X_{4,n}$ is obtained as the $n$th partial sum of a vaguelet decomposition of $X$.
\end{enumerate}

\begin{theorem}\label{co:1} For all $j=1,\ldots,3$ the random measures
  \[\sqrt{\log n}\exp\big( X_{j,n}(x)-\frac{1}{2}\E (X_{j,n}(x))^2\big) \, dx\]
  converge as $n\to\infty$ in probability to the same nontrivial random measure $\mutorus{1}$ on $\T$, which is the fundamental critical measure on $\T$. The convergence actually takes place in $L^p(\Omega)$ for every $0 < p < 1$. The same holds for the vaguelet decomposition $X_{4,n}$ with the normalization $\sqrt{n \log 2}$ instead of $\sqrt{\log n}$.
\end{theorem}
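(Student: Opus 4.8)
The plan is to use the periodic hyperbolic white noise approximation $X_{1,n}$ as the reference sequence in Theorem~\ref{thm:uniqueness} and to bootstrap the remaining cases from it. First I would record that the periodic hyperbolic white noise construction has independent increments, so that (up to the deterministic factor) the measures $\sqrt{\log n}\exp(X_{1,n}-\frac{1}{2}\E X_{1,n}^2)\,dx$ are exactly the martingale-type critical approximation of \eqref{eq:cr} transported to the circle: the hyperbolic cut-off at level $1/n$ corresponds to depth $t=\log n$, whence the normalisation $\sqrt{t}=\sqrt{\log n}$. Invoking the critical convergence results of Duplantier--Rhodes--Sheffield--Vargas one obtains that this sequence converges in probability, and in $L^p(\Omega)$ for $0<p<1$, to a nontrivial limit $\mutorus{1}$; one also records the known fact that the critical chaos is almost surely non-atomic, which is precisely the hypothesis needed to feed $X_{1,n}$ into Theorem~\ref{thm:uniqueness} as the sequence $\widetilde{X}_n$ with common reference measure $\rho_n=\sqrt{\log n}\,dx$.

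The analytic core is the covariance bookkeeping. I would compute each covariance and compare it with the full log-correlated kernel $\E X(x)X(y)=2\log 2-2\log|\sin\pi(x-y)|$. For the Fourier cut-off one has $C_{2,n}(x,y)=4\log 2+2\sum_{k=1}^n k^{-1}\cos(2\pi k(x-y))$, with variance $4\log 2+2\sum_{k=1}^n k^{-1}=2\log n+O(1)$, while the hyperbolic cut-off has variance $2+2\log n$ and $X_{3,n}=\phi_{1/n}*X$ has the doubly mollified kernel. Condition \eqref{ehto1} then reduces to checking that the differences of variances on the diagonal and of the kernels off it stay bounded uniformly in $n$, which follows from $\sum_{k=1}^n k^{-1}=\log n+O(1)$ and the corresponding $O(1)$ gaps for the other two families. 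Condition \eqref{ehto2} amounts to all three kernels converging to the same limit uniformly on $\{d(x,y)>\delta\}$; for the Fourier tail this is the Dirichlet-test estimate $|\sum_{k>n}k^{-1}\cos(2\pi k(x-y))|\le C/(n\,|\sin\pi(x-y)|)$, and for the mollified field it is standard. With both conditions verified, Theorem~\ref{thm:uniqueness} yields convergence in distribution of the measures built from $X_{2,n}$ and $X_{3,n}$ to the same limit $\mutorus{1}$.

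To upgrade to convergence in probability and in $L^p$ I would appeal to the machinery of Section~\ref{sec:inprobability}: since the reference sequence $X_{1,n}$ converges in probability and carries the requisite martingale structure, the distributional convergence just obtained lifts to convergence in probability, and the uniform $L^p$ bounds for $0<p<1$ transfer as well. The vaguelet case is handled by the same template after a reindexing: the $n$th vaguelet partial sum has effective cut-off scale $2^{-n}$, hence depth $\approx n\log 2$ and variance $\approx 2n\log 2$, which is why the correct normalisation is $\sqrt{n\log 2}$. Comparing $X_{4,n}$ against the hyperbolic reference $X_{1,2^{n}}$, whose depth is $\log 2^{n}=n\log 2$ and whose normalisation $\sqrt{\log 2^{n}}=\sqrt{n\log 2}$ agrees, puts it in the scope of Theorem~\ref{thm:uniqueness} with reference measure $\sqrt{n\log 2}\,dx$; note that $\sqrt{n\log 2}\exp(X_{1,2^n}-\frac{1}{2}\E X_{1,2^n}^2)\,dx$ is just the subsequence $m=2^n$ of the convergent family, so it still converges to $\mutorus{1}$.

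I expect the main obstacle to be twofold. The routine-looking but genuinely delicate point is the uniform control in \eqref{ehto1}: one must ensure the depth/index identifications (hyperbolic $e^{-t}=1/n$ versus Fourier cut-off $n$ versus dyadic level) are matched closely enough that the variance discrepancies on the diagonal stay bounded rather than grow, since a mismatch of even logarithmic order would violate \eqref{ehto1}. The second, conceptually heavier obstacle is the vaguelet covariance estimate: unlike the Fourier and convolution families, the vaguelet kernel is not given in closed form, so verifying \eqref{ehto1}--\eqref{ehto2} requires quantitative decay and localisation estimates for the vaguelet system together with the reindexing $m=2^n$, and some care that the $L^p$-lifting of Section~\ref{sec:inprobability} remains applicable along this subsequence.
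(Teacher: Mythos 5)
Your overall architecture coincides with the paper's (the hyperbolic white-noise field $X_{1,n}$ as the reference sequence with martingale structure, covariance comparisons feeding Theorem~\ref{thm:uniqueness}, the machinery of Section~\ref{sec:inprobability} for the lift to probability, and the reindexing $m=2^n$ for the vaguelet case), but your first step hides a genuine gap. The measures built from $X_{1,n}$ are \emph{not} ``exactly the critical approximation \eqref{eq:cr} transported to the circle'': the convergence result of \cite{duplantier2014renormalization} is proved for the exactly scale invariant field built from the cones $A_t(x)$ on the interval, whose covariance is $2\log\frac{1}{|x-y|}+O(1)$, whereas the periodic cones $H_t(x)$ (with the $\arctan$ boundary) define a different field, with covariance $2\log\frac{1}{2|\sin \pi(x-y)|}+O(1)$, to which no off-the-shelf scale-invariance theorem applies. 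The paper's Lemma~\ref{lemma:white_noise_limit} spends most of its effort exactly on this transfer: it splits $H_t$ into $H_t^{\pm}$, writes the local field as $X_t+Z_t$ where $X_t=\W(A_t(\cdot))$ is the genuine DRSV field, proves via Lemma~\ref{lemma:white_noise_holder} that the correction field $Z_t$ is H\"older-regular and converges almost surely uniformly, transfers the weak $L^p$ convergence through Proposition~\ref{prop:adding_regular_field} (applied twice, once for $Z_t$ and once for the regular part $Y^+$), and finally globalizes with a partition of unity on the circle. Without this, or an equivalent argument, your base case --- and hence everything downstream, including the non-atomic reference measure that Theorem~\ref{thm:uniqueness} requires --- is unsupported.

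A second gap concerns the lift to convergence in probability and in $L^p$. Theorem~\ref{thm:convergence_in_probability} is not a black box of the form ``distributional convergence plus a martingale reference implies convergence in probability'': it applies only to approximations written as $R_nX$ for a \emph{linear regularization process} in the sense of Definition~\ref{def:linear_reg}, and one must construct $R_n$ and verify its two conditions for each family. This is routine for the Fourier and convolution approximations (partial Fourier sums plus Jackson's theorem; the convolution operators themselves), but for the vaguelets the paper's Lemma~\ref{lemma:convg_in_prob} needs a real argument: the projection $R_nf=\int_0^1 f+\sum_{j\le n}\sum_{k}\big(\int_0^1\psi_{j,k}\,I^{-1/2}f\big)\nu_{j,k}$, an integration by parts against $\psi'$ (this is where the bounded-variation hypothesis \eqref{eq:wavelet_derivative_decay} enters), and mapping properties of $I^{1/2}$ and the Hilbert transform on H\"older spaces, in order to get $\|R_nf-f\|_\infty\to 0$ on $C^\alpha$. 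Your proposal asserts that the lift ``transfers'' without any of this. Finally, a smaller but real point in the covariance bookkeeping: your two checks --- diagonal variances matching to $O(1)$, and the off-diagonal Dirichlet-test bound $C/(n|\sin\pi(x-y)|)$ --- leave the window $0<|x-y|\lesssim 1/n$ uncovered, since the tail bound degenerates precisely there. The paper closes this window by showing that both covariances equal $2\log n+O(1)$ throughout it, using $|f_n'(x)|\le 4\pi n$ for the Fourier side and the explicit piecewise formula for the hyperbolic covariance; you correctly flag this as the delicate point, but it does require that extra argument rather than following from the two checks you list.
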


We refer to Section~\ref{sec:application} for the precise definitions of the approximations used above.
Theorem~\ref{co:1} naturally holds true in the subcritical case if above $X_{j,n}$ is replaced by $\beta X_{j,n}$ with $\beta\in (0,1)$, and one removes the factor $\sqrt{\log n}$. We denote the limit measure by $\mutorus{\beta}$.

\medskip

\noindent\emph{Acknowledgements.}\quad We thank Dario Gasbarra for useful discussions in connection with Lemma~\ref{lemma:measure_equal_in_prob} and Christian Webb for many valuable remarks on the manuscript.

\section{Notation and basic definitions}\label{sec:notation}

\noindent A metric space is \emph{doubling} if there exists a constant $M > 0$ such that any ball of radius $\varepsilon > 0$ can be covered with at most $M$ balls of radius $\varepsilon/2$. In this work we shall always consider a doubling compact metric space $(\T,d)$. We denote by $\M^+$ the space of (positive) Radon measures on $\T$.
The space $\M$ of real-valued Radon measures on $\T$ can be given the weak$^*$-topology by interpreting it as the dual of $C(\T)$. We then give $\M^+ \subset \M$ the subspace topology.

The space $\M^+$ is metrizable (which is not usually the case for the full space $\M$), for example by using the Kantorovich--Rubinstein metric defined by
\[d(m, m') := \sup \left\{\int_{\T} f(x) \, d(m - m')(x) : f \colon \T \to \reals \text{ is 1-Lipschitz}\right\}.\]
For a proof see \cite[Theorem 8.3.2]{bogachev2007measure}.

Let $\mathcal{P}(\M^+)$ denote the space of Radon probability measures on $\M^+$. One should note that Borel probability measures and Radon probability measures coincide in this situation, as well as in the case of $\mathcal{P}(\T)$, since we are dealing with Polish spaces. Let $(\Omega, \mathcal{F}, \P)$ be a fixed probability space. 
We call a measurable map $\mu \colon \Omega \to \M^+$ a {\it random measure}\ on $\T$.
For a given random measure $\mu$ the push-forward measure $\mu_* \P \in \mathcal{P}(\M^+)$ is called the distribution of $\mu$ and we say that a family of random measures $\mu_n$ converges in distribution if the measures $\mu_{n*} \P$ converge weakly in $\mathcal{P}(\M^+)$ (i.e.\ when evaluated against bounded continuous functions $\mathcal{P}(\M^+) \to \reals$). In order to check the convergence in distribution, it is enough to verify that
\[\mu_n(f) := \int f(x) \, d\mu_n(x)\]
converges in distribution for every $f \in C(\T)$.

A stronger form of convergence is the following: We say that a sequence of random measures $(\mu_n)$ converges {\it weakly in $L^p$}\ to a random measure $\mu$ if for all $f \in C(\T)$ the random variable $\int f(x) \, d\mu_n(x)$ converges in $L^p(\Omega)$ to $\int f(x) \, d\mu(x)$. This obviously implies the convergence $\mu_n \to \mu$ in distribution.

A (pointwise defined) Gaussian field $X$ on $\T$ is a random process indexed by $\T$ such that $(X(t_1), \dots, X(t_n))$ is a multivariate Gaussian random variable for every $t_1,\dots,t_n \in \T$, $n \ge 1$.
We will assume that all of our Gaussian fields are centered unless otherwise stated.

\begin{definition}\label{def:holder_regular}
  A (centered) Gaussian field $X$ on a compact metric space $\T$ is \textit{H\"older-regular} if the map $(x,y) \mapsto \sqrt{\E|X(x) - X(y)|^2}$ is $\alpha$-H\"older continuous on $\T \times \T$ for some $\alpha > 0$.
\end{definition}

\begin{lemma}\label{lemma:holder_regular_cont}
  The realizations of any H\"older-regular Gaussian field on $\T$ can be chosen to be almost surely $\beta$-H\"older continuous with some $\beta > 0$.
\end{lemma}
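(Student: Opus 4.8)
The plan is to reduce the statement to the Kolmogorov--Chentsov continuity theorem on metric spaces, exploiting that the field is Gaussian (so that all moments of the increments are controlled by the $L^2$-increment) and that $\T$ is doubling (so that the metric entropy grows polynomially). Write $d_X(x,y) := \sqrt{\E|X(x)-X(y)|^2}$ for the canonical metric of the field. By the definition of H\"older-regularity there are constants $\alpha>0$ and $C>0$ with $d_X(x,y) \le C\, d(x,y)^\alpha$ for all $x,y \in \T$.

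First I would record the Gaussian moment estimate. Since $X$ is centered and Gaussian, for each pair $x,y$ the increment $X(x)-X(y)$ is a centered Gaussian variable of variance $d_X(x,y)^2$, so for every $p>0$ one has $\E|X(x)-X(y)|^p = c_p\, d_X(x,y)^p$, where $c_p$ is the $p$-th absolute moment of a standard Gaussian. Combining this with the H\"older bound on $d_X$ yields
\[\E|X(x)-X(y)|^p \le c_p\, C^p\, d(x,y)^{\alpha p} \qquad \text{for all } x,y \in \T.\]
Next I would extract a polynomial covering bound from the doubling property. Starting from a single ball of radius $\diam(\T)$ that contains $\T$ and iterating the doubling condition, each halving of the radius increases the number of covering balls by a factor at most $M$; after $k$ steps one covers $\T$ by at most $M^k$ balls of radius $\diam(\T)\,2^{-k}$. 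Hence, setting $D := \log_2 M$, the space $\T$ can be covered by at most $C_0\, r^{-D}$ balls of radius $r$ for every $0 < r \le \diam(\T)$.

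With these two ingredients in hand the conclusion follows from the metric-space version of Kolmogorov's theorem: choosing $p$ so large that $\alpha p > D$, the moment bound together with the polynomial covering estimate produces, via a chaining argument over a sequence of $2^{-j}$-nets, a modification of $X$ whose realizations are almost surely $\beta$-H\"older continuous for every $\beta < \alpha - D/p$. Taking for instance $\beta = \alpha/2$ and $p$ correspondingly large gives the claim.

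The actual work, and the only place where the doubling hypothesis is genuinely used, is the chaining step: one builds nested nets $N_j \subset \T$ with $\#N_j \lesssim 2^{jD}$, bounds the maximal increment across consecutive nets by applying the moment estimate to each of the $\lesssim 2^{jD}$ relevant pairs together with Markov's inequality and a union bound, and then sums the resulting geometric series to obtain an almost surely finite H\"older modulus on $\bigcup_j N_j$; continuity extends the estimate from this dense set to all of $\T$. I expect the bookkeeping of this chaining estimate, rather than any conceptual difficulty, to be the principal obstacle, but it is entirely routine once the polynomial entropy bound and the arbitrary-moment control are available.
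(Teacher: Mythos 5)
Your proposal is correct, but it follows a genuinely different route from the paper. The paper disposes of this lemma in one line by invoking Dudley's theorem (the metric-entropy criterion for continuity of Gaussian processes, \cite[Theorem 1.3.5]{adler2009random}): the doubling property gives polynomial covering numbers $N(\T,d,r)\lesssim r^{-D}$, the H\"older bound $d_X(x,y)\le C\,d(x,y)^\alpha$ transfers this to the canonical metric, and the entropy integral $\int_0^\delta \sqrt{\log N(\T,d_X,\varepsilon)}\,d\varepsilon \lesssim \delta\sqrt{\log(1/\delta)}$ then yields a modulus of continuity of order $d(x,y)^\alpha\sqrt{\log(1/d(x,y))}$, hence H\"older continuity of every exponent $\beta<\alpha$. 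You instead run a Kolmogorov--Chentsov chaining argument from scratch: Gaussianity enters only through the moment identity $\E|X(x)-X(y)|^p=c_p\,d_X(x,y)^p$, the doubling property again supplies the polynomial entropy bound, and Markov plus a union bound over nested nets gives exponent $\beta<\alpha-D/p$ for $p$ arbitrarily large. Your reduction of the paper's definition (H\"older continuity of $(x,y)\mapsto d_X(x,y)$ on $\T\times\T$) to the bound $d_X(x,y)\le C\,d(x,y)^\alpha$ by setting one argument equal to the other is correct, and your sketch of the chaining step (separated nets, bounded overlap from doubling, Borel--Cantelli, extension from the dense set to a modification) is the standard and complete skeleton of the proof. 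What each approach buys: the paper's is shorter and, being tailored to Gaussian processes, gives the sharper log-corrected modulus; yours is self-contained, avoids Gaussian concentration entirely, and therefore applies verbatim to any field with polynomial moment control of increments --- at the cost of losing an arbitrarily small amount $D/p$ in the exponent, which is immaterial for the statement at hand.
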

\begin{proof}
  This is an immediate consequence of Dudley's theorem (See for instance \cite[Theorem 1.3.5]{adler2009random}.) and the fact that our space is doubling.
\end{proof}

\begin{remark}
  By Dudley's theorem the conclusion of Lemma~\ref{lemma:holder_regular_cont} would be valid under much less restrictive assumptions on the covariance, and most of the results of the present paper could be reformulated accordingly.
\end{remark}

\noindent Assume that we are given a sequence of H\"older-regular Gaussian fields $(X_n)$ on $\T$ and also a sequence of measures $\rho_n \in \M^+$.
Define for all $n \ge 1$ a random measure $\mu_n \colon \Omega \to \M^+$ by setting
\begin{equation}\label{eq:yhteys}
\mu_n(f) := \int_{\T} f(x) e^{X_{n}(x) - \frac{1}{2} \E[X_n{(x)}^2]} \, d\rho_n(x),
\end{equation}
for all $f \in C(\T)$.
In the case where the measures $\mu_n$ converge in distribution to a random measure $\mu \colon \Omega \to \M^+$, we call $\mu$ a \emph{Gaussian multiplicative chaos} (GMC) associated with the families $X_n$ and $\rho_n$. We call the sequence of measures $\rho_n$ a \emph{normalizing sequence}. In the standard models of subcritical and critical chaos the typical choices are $\rho_n := \lambda$ and $\rho_n := C \sqrt{n} \lambda$ (or $\rho_n := C \sqrt{\log n} \lambda$), respectively, where $\lambda$ stands for the Lebesgue measure.

Unless otherwise stated, when comparing the limits of two sequences of random measures $(\mu_n)$ and $(\widetilde{\mu}_n)$, we will always use the same normalizing sequence $(\rho_n)$ to construct both $\mu_n$ and $\widetilde{\mu}_n$.

Lastly we recall the following fundamental convexity inequality due to Kahane \cite{kahane1985}.
\begin{lemma}
  Assume that $X$ and $Y$ are two H\"older-regular fields such that the covariances satisfy $C_X(s,t) \ge C_Y(s,t)$ for all $s,t \in \T$. Then for every concave function $f \colon [0,\infty) \to [0,\infty)$ we have
      \[\E\Bigg[f\Big(\int_{\T} e^{X(t) - \frac{1}{2}\E[X(t)^2]} \, d\rho(t)\Big)\Bigg] \le \E\Bigg[f\Big(\int_{\T} e^{Y(t) - \frac{1}{2} \E[Y(t)^2]} \, d\rho(t)\Big)\Bigg]\]
  for all $\rho \in \M^+$.
\end{lemma}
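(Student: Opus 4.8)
The plan is to prove Kahane's inequality by a Gaussian interpolation argument, carried out first for a finite collection of points and then transferred to the integral by approximation. First I would discretize: fix a partition of $\T$ into small Borel cells, pick a point $x_i$ in each cell, and set $p_i := \rho(\text{cell}_i)$. The Riemann-type sums
\[
S_X := \sum_i p_i\, e^{X(x_i) - \frac12\E[X(x_i)^2]}, \qquad S_Y := \sum_i p_i\, e^{Y(x_i) - \frac12\E[Y(x_i)^2]}
\]
converge almost surely, as the mesh shrinks, to $\int_\T e^{X-\frac12\E[X^2]}\,d\rho$ and $\int_\T e^{Y-\frac12\E[Y^2]}\,d\rho$, using the almost sure continuity of the sample paths furnished by Lemma~\ref{lemma:holder_regular_cont}. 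Since $f$ is continuous and of at most linear growth (a nonnegative concave function on $[0,\infty)$ satisfies $f(x)\le a+bx$), dominated convergence lets me pass $\E[f(\cdot)]$ to the limit, so it suffices to prove the discrete statement: whenever $(X(x_i))$ and $(Y(x_i))$ are centered Gaussian vectors with $\E[X(x_i)X(x_j)]\ge\E[Y(x_i)Y(x_j)]$ entrywise and $p_i\ge0$, one has $\E[f(S_X)]\le\E[f(S_Y)]$.

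For the discrete inequality I would first assume $f\in C^2$ with $f''\le0$; the general case follows by mollifying $f$ with a nonnegative kernel, which preserves concavity, and taking a limit. As each side depends only on the law of one vector, I may couple $X$ and $Y$ as independent copies and introduce the interpolation $Z_i(t) := \sqrt{t}\,X(x_i) + \sqrt{1-t}\,Y(x_i)$ for $t\in[0,1]$, setting $U_i(t) := \exp\big(Z_i(t) - \tfrac12\E[Z_i(t)^2]\big)$, $S(t) := \sum_i p_i U_i(t)$, and
\[
\varphi(t) := \E\big[f(S(t))\big].
\]
Since $Z_i(1)=X(x_i)$ and $Z_i(0)=Y(x_i)$, the endpoints satisfy $\varphi(1)=\E[f(S_X)]$ and $\varphi(0)=\E[f(S_Y)]$, so the claim reduces to showing that $\varphi$ is nonincreasing on $[0,1]$.

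The heart of the argument is the computation of $\varphi'$. Differentiating under the expectation and writing $\dot Z_i(t)=\frac{1}{2\sqrt t}X(x_i)-\frac{1}{2\sqrt{1-t}}Y(x_i)$, the chain rule produces a term linear in the Gaussians together with a variance correction. Applying the Gaussian integration by parts formula $\E[G_k F(G)]=\sum_l \E[G_kG_l]\,\E[\partial_{G_l}F]$ to each factor $\E[X(x_i)f'(S)U_i]$ and $\E[Y(x_i)f'(S)U_i]$, and using that the $X$ and $Y$ blocks are independent, the prefactors $\frac{1}{2\sqrt t}$ and $\frac{1}{2\sqrt{1-t}}$ cancel against the $\sqrt t$ and $\sqrt{1-t}$ produced by $\partial_{X_j}Z_j$ and $\partial_{Y_j}Z_j$. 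The diagonal first-order contributions produce exactly $\tfrac12\sum_i p_i\big(\E[X(x_i)^2]-\E[Y(x_i)^2]\big)\E[f'(S)U_i]$, which is precisely cancelled by the variance-correction term, leaving only the off-diagonal second-order part
\[
\varphi'(t) = \frac12 \sum_{i,j} p_i p_j\,\big(C_X(x_i,x_j)-C_Y(x_i,x_j)\big)\,\E\big[f''(S(t))\,U_i(t)U_j(t)\big].
\]

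Finally I would read off the sign termwise: by hypothesis $C_X(x_i,x_j)-C_Y(x_i,x_j)\ge0$, the weights satisfy $p_ip_j\ge0$, and since $f$ is concave while the $U_i$ are positive we have $f''(S)U_iU_j\le0$ pointwise, hence $\E[f''(S)U_iU_j]\le0$. Thus every summand is nonpositive, $\varphi'(t)\le0$, and $\varphi(1)\le\varphi(0)$, which is the desired inequality. The key conceptual point is that only entrywise domination of the covariances is required — no positive-definiteness of the difference — precisely because the concavity of $f$ renders each individual term nonpositive. I expect the main obstacles to be technical rather than conceptual: justifying the integration by parts needs enough regularity and integrability (supplied by the $C^2$-reduction, the bounded derivatives of the mollified $f$, and the finiteness of $\E[|X(x_i)|U_i]$ for Gaussian exponentials), and the discretization limit requires care in interchanging the limit with $\E[f(\cdot)]$, for which the linear growth of $f$ together with the almost sure continuity of sample paths provides the necessary domination.
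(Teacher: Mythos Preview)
Your interpolation argument is the standard proof of Kahane's convexity inequality and is correct as sketched; the only quibble is the phrase ``off-diagonal second-order part'' --- the surviving sum $\sum_{i,j}$ of course includes the diagonal, and what you mean is that only the $f''$-contribution remains after the $f'$-terms cancel against the variance correction.

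There is nothing to compare against, however: the paper does not prove this lemma. It is introduced with the words ``Lastly we recall the following fundamental convexity inequality due to Kahane'' and is simply cited from \cite{kahane1985} as a known tool. So your write-up actually supplies a proof where the paper gives none; the authors treat the inequality as a black box and invoke it repeatedly (in Lemmas~\ref{lemma:convergence} and~\ref{lemma:diagonal_approximation} and in the proof of Theorem~\ref{thm:uniqueness}) without rederiving it.
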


\section{Convergence and uniqueness: Proof of Theorem~\ref{thm:uniqueness}}\label{sec:proof}

\noindent In this section we prove Theorem~\ref{thm:uniqueness}.
The simple idea of the proof is as follows:
We construct a sequence of auxiliary fields $Y_\varepsilon$ (see especially Lemma~\ref{lemma:construction_of_Z}) that we add on top of the fields $X_n$ in order to ensure that the covariance of $X_n + Y_\varepsilon$ dominates the covariance of $\widetilde{X}_n$ pointwise. The fields $Y_\varepsilon$ become fully decorrelated as $\varepsilon \to 0$, and their construction relies on the non-atomicity of the random measure $\widetilde{\mu}$.
After these preparations one may finish by a rather standard application of Kahane's chaos comparison inequality.

The next two lemmata are almost folklore, but we provide proofs for completeness.

\begin{lemma}\label{lemma:auxiliary_function}
  Let $(\mu_n)$ be a tight sequence of random measures. Then there exists a function $h \colon [0,\infty) \to [0,\infty)$ that has the following properties:
  \begin{enumerate}
    \item functions $h, h^2$ and $h^4$ are increasing and concave with $h(0) = 0$ and $\lim_{x \to \infty} h(x) = \infty$,
    \item $h$ satisfies $\min(1,x) h(y) \le h(xy) \le \max(1,x) h(y)$, and
    \item $\sup_{n \ge 1} \E h(\mu_n(\T))^4 < \infty$.
  \end{enumerate}
\end{lemma}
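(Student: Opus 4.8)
The plan is to reduce everything to a single scalar de la Vallée Poussin type construction applied to the total masses $Z_n := \mu_n(\T)$. First I would pass from tightness of the random measures to tightness of these real-valued random variables. Since $\T$ is compact, a subset of $\M^+$ is relatively compact in the weak$^*$ topology precisely when it is bounded in total mass; hence by Prokhorov's theorem the tightness of the laws $\mu_{n*}\P$ in $\mathcal{P}(\M^+)$ yields, for every $\varepsilon>0$, a constant $M$ with $\sup_n \P(Z_n > M) < \varepsilon$. Equivalently, the function $\bar p(t) := \sup_{n\ge 1}\P(Z_n > t)$ decreases to $0$ as $t\to\infty$.

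The key structural simplification is that property (2) is automatic. Indeed, if $h$ is increasing and concave with $h(0)=0$, then $h(\lambda t)\ge \lambda h(t)$ for $\lambda\in[0,1]$ and $h(\lambda t)\le \lambda h(t)$ for $\lambda\ge 1$, both being immediate from concavity together with $h(0)=0$. Reading these with $\lambda = x$, $t = y$ and distinguishing the cases $x\le 1$ and $x\ge 1$ gives exactly $\min(1,x)h(y)\le h(xy)\le\max(1,x)h(y)$. Thus it suffices to produce an increasing $h$ with $h(0)=0$ and $h(x)\to\infty$ such that $h$, $h^2$ and $h^4$ are all concave and $\sup_n \E h(Z_n)^4<\infty$.

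I would build such an $h$ through its fourth power. Set $g := h^4$ and demand only that $g$ be increasing and concave with $g(0)=0$ and $g(x)\to\infty$. Since $t\mapsto t^{1/4}$ and $t\mapsto t^{1/2}$ are themselves increasing and concave, the compositions $h = g^{1/4}$ and $h^2 = g^{1/2}$ are then increasing and concave as well; so property (1) holds for free, and property (3) becomes $\sup_n\E g(Z_n)<\infty$. It therefore remains to construct a single increasing concave $g$ with $g(0)=0$, $g\to\infty$ and uniformly bounded expectations $\E g(Z_n)$. Using $\bar p$ from the first step, I would choose $0=t_0<t_1<t_2<\cdots\to\infty$ with $\bar p(t_k)\le 4^{-k}$ and with the gaps $t_k-t_{k-1}$ increasing, and let $g$ be the piecewise linear function with $g(t_k)=k$. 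The increasing gaps make the slopes decrease, so $g$ is concave, while $g(t_k)=k$ forces $g\to\infty$. Finally, bounding $\E g(Z_n)\le \sum_{k\ge 0} g(t_{k+1})\,\P(Z_n\ge t_k)\le \sum_{k\ge 0}(k+1)\,4^{-(k-1)}$ gives an estimate independent of $n$, which is property (3).

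The steps are all routine once the scalar reduction is in place; the only point requiring genuine care is the first one, namely correctly extracting tightness of the total masses $Z_n$ from tightness in $\M^+$, which rests on identifying the weak$^*$-compact subsets of $\M^+$ with the mass-bounded families. The construction of $g$ is the classical de la Vallée Poussin argument, and the extra freedom to enlarge the $t_k$ is exactly what lets me keep $g$ concave while still making the moments uniformly summable.
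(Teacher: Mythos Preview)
Your proposal is correct and follows essentially the same route as the paper: a de la Vall\'ee Poussin construction on the total masses to get a concave function with uniformly bounded expectation, followed by taking a fourth root so that $h$, $h^2$, $h^4$ are simultaneously concave, with property~(2) falling out of concavity and $h(0)=0$. Your write-up is in fact more explicit than the paper's in justifying the passage from tightness in $\mathcal{M}^+$ to tightness of $\mu_n(\T)$ and in arranging the gaps $t_k-t_{k-1}$ to be increasing so that the piecewise linear $g$ is genuinely concave.
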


\begin{proof}
  First of all, by the definition of tightness one may easily pick an increasing $g \colon [0,\infty) \to [1,\infty)$ with $\lim_{x \to \infty} g(x) = \infty$ such that $\sup_{n \ge 1} \E[g(\mu_n(\T))] < \infty$.
  Namely, let $0 = t_0 \le t_1 \le t_2 \le \dots$ be an increasing sequence of real numbers such that
  $\sup_{n \ge 1} \P[\mu_n(\T) \ge t_k] \le k^{-2}$
  for all $k \ge 1$ and set $g(x) = \sum_{k=0}^\infty \chi_{[t_k,\infty)}$.
  One may choose a concave function $\widetilde{h}$ that is majorized by $g$ and satisfies both $\widetilde{h}(0) = 0$ and $\lim_{x \to \infty} h(x) = \infty$. Finally, set $h(x) := (\widetilde{h}(x))^{1/4}$. Condition (3) follows, and (2) is then automatically satisfied by concavity. Since compositions of non-negative concave functions remain concave we obtain (1) as well.
\end{proof}

\begin{lemma}\label{lemma:convergence}
  For $n\geq 1$ let $X_n$ and $\widetilde{X}_n$ be H\"older-regular Gaussian fields on $\T$ with covariance functions
  $C_n(x,y)$ and $\widetilde{C}_n(x,y)$. Define the random measures $\mu_n$ and $\widetilde{\mu}_n$ using the fields $X_n$ and $\widetilde{X}_n$, respectively.
  Assume that there exists a constant $K > 0$ such that
  \[\sup_{x,y \in \T} (\widetilde{C}_n(x,y) - C_n(x,y)) \le K < \infty\]
  for all $n\ge1$ and that the family $(\widetilde{\mu}_n)$ is tight {\rm (}in $\mathcal{P}(\M^+)${\rm )}.
  Then also the family $(\mu_n)$ is tight.
\end{lemma}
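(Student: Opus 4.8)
The plan is to deduce tightness of $(\mu_n)$ from the tightness of $(\widetilde{\mu}_n)$ by exploiting Kahane's convexity inequality together with the comparison function $h$ produced by Lemma~\ref{lemma:auxiliary_function}. First I would recall that in the space $\M^+$ equipped with the weak$^*$-topology, tightness of a family of random measures is controlled by the total masses: since $\T$ is compact, the sets $\{m \in \M^+ : m(\T) \le R\}$ are weak$^*$-compact by the Banach--Alaoglu theorem. Hence to prove that $(\mu_n)$ is tight it suffices to show that the total masses $\mu_n(\T)$ are tight as real random variables, i.e.\ that $\sup_n \P[\mu_n(\T) \ge R] \to 0$ as $R \to \infty$.

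The key idea is to compare $\mu_n(\T)$ with $\widetilde{\mu}_n(\T)$ through a suitable concave functional. Applying Lemma~\ref{lemma:auxiliary_function} to the \emph{tight} sequence $(\widetilde{\mu}_n)$, I obtain a function $h$ with $h, h^2, h^4$ increasing and concave, satisfying the multiplicative bound (2) and the uniform bound $\sup_n \E h(\widetilde{\mu}_n(\T))^4 < \infty$. The covariance hypothesis states $\widetilde{C}_n(x,y) - C_n(x,y) \le K$, i.e.\ $\widetilde{C}_n(x,y) \le C_n(x,y) + K$. To invoke Kahane's inequality I would introduce an auxiliary independent centered Gaussian variable $N$ with variance $K$ (constant in $x$), and set $X_n'(x) := X_n(x) + N$, whose covariance is $C_n(x,y) + K \ge \widetilde{C}_n(x,y)$. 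Since $X_n'$ dominates $\widetilde{X}_n$ in covariance, Kahane's convexity inequality applied to the concave function $h$ gives
\[
\E\bigl[h(\mu_n'(\T))\bigr] \le \E\bigl[h(\widetilde{\mu}_n(\T))\bigr],
\]
where $\mu_n'(\T) = e^{N - \frac12 K}\,\mu_n(\T)$ because adding the constant field $N$ rescales the measure by the factor $e^{N - \frac12\E N^2} = e^{N - \frac12 K}$.

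The next step is to remove the random factor $e^{N - \frac12 K}$. Using property (2) of $h$, namely $\min(1,x)\,h(y) \le h(xy)$ with $x = e^{N - K/2}$ and $y = \mu_n(\T)$, I get $h(\mu_n'(\T)) \ge \min(1, e^{N - K/2})\, h(\mu_n(\T))$. Taking expectations and using the independence of $N$ from $X_n$, this yields
\[
c\,\E\bigl[h(\mu_n(\T))\bigr] \le \E\bigl[h(\mu_n'(\T))\bigr] \le \E\bigl[h(\widetilde{\mu}_n(\T))\bigr],
\]
where $c := \E[\min(1, e^{N - K/2})] > 0$ is a strictly positive constant independent of $n$. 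Combining with property (3) gives $\sup_n \E[h(\mu_n(\T))] < \infty$. Since $h$ is increasing with $h(x) \to \infty$, a Markov-type estimate $\P[\mu_n(\T) \ge R] = \P[h(\mu_n(\T)) \ge h(R)] \le \E[h(\mu_n(\T))]/h(R)$ shows the total masses are uniformly tight, completing the argument.

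The main obstacle I anticipate is the careful bookkeeping around Kahane's inequality: one must verify that $h$ is genuinely concave (rather than merely $h^4$), that the independent constant shift $N$ can legitimately be incorporated while keeping the field H\"older-regular, and that the direction of the covariance comparison matches the direction of the inequality demanded by the lemma. A subtle point is that Kahane's inequality as stated compares a field with \emph{larger} covariance against one with smaller covariance via a concave $f$, so I must ensure $X_n'$ plays the role of the larger-covariance field and $\widetilde X_n$ the smaller one; the hypothesis $\widetilde C_n - C_n \le K$ is exactly what makes $C_n + K \ge \widetilde C_n$ hold. The multiplicative property (2) of $h$ is the crucial device that lets the constant shift be absorbed into a harmless factor, which is precisely why Lemma~\ref{lemma:auxiliary_function} was engineered to provide it.
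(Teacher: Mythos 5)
Your proof is correct and follows the paper's own strategy: reduce tightness to uniform control of the total masses via Banach--Alaoglu, apply Lemma~\ref{lemma:auxiliary_function} to the tight sequence $(\widetilde{\mu}_n)$, add an independent centered Gaussian $N$ of variance $K$ so that $X_n + N$ dominates $\widetilde{X}_n$ in covariance, and conclude with Kahane's convexity inequality. The one place you genuinely diverge is in removing the lognormal factor $e^{N - K/2}$. The paper writes $\mu_n = e^{-K^{1/2}G + K/2}\mu_n'$, bounds $h(\mu_n(\T)) \le \max(1, e^{-K^{1/2}G + K/2})\,h(\mu_n'(\T))$, and then must invoke Cauchy--Schwarz (hence apply Kahane to the concave function $h^2$) because $G$ and $\mu_n'$ are \emph{not} independent. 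You instead bound $h(\mu_n'(\T)) \ge \min(1, e^{N - K/2})\,h(\mu_n(\T))$ and factor the expectation using the genuine independence of $N$ and $\mu_n(\T)$, which is valid since $\mu_n(\T)$ is a function of $X_n$ alone; this yields the positive constant $c = \E[\min(1, e^{N-K/2})]$ directly. Your variant is slightly more economical: it needs Kahane only for $h$ itself (not $h^2$) and avoids the Cauchy--Schwarz step, at no cost in generality.
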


\begin{proof}
   By the Banach--Alaoglu theorem
   it is enough to check that
   \[\lim_{u \to \infty} \sup_{n\geq 1}\P[\mu_n(\T) > u] = 0.\]
   Since $\lim_{u\to\infty}h(u)=\infty$, it suffices to verify that $\sup_{n\ge 1} \E h(\mu(\T))<\infty, $ where $h$ is the concave function given by Lemma~\ref{lemma:auxiliary_function} for the tight sequence $\widetilde\mu_n$. Pick an independent standard Gaussian $G$. By our assumption the covariance of the field $X'_n:=X_n + K^{1/2}G$ dominates that of the field $\widetilde{X}_n$, and if the random measure $\mu'_n$ is defined by using the field $X'_n,$ we obtain by Kahane's concavity inequality
\[
  \E(h(\mu'_n(\T)))^2 \le \E(h(\widetilde{\mu}_n(\T)))^2 \le c \quad \text{for any}\; n \ge 1
  \]
for some constant $c > 0$ not depending on $n$.

Since $\mu'_n=e^{K^{1/2}G-K/2}\mu_n$ the properties (2) and (3) of Lemma~\ref{lemma:auxiliary_function} enable us to estimate for all $n \ge 1$ that
\begin{align}
 \E h(\mu_n(\T))  &= \E\, h(e^{-K^{1/2}G+K/2}\mu'_n(\T))\le \E\big( \max(1,e^{-K^{1/2}G+K/2}) h(\mu'_n(\T))\big)\nonumber \\
                  &\le  \big( \E (\max(1,e^{-K^{1/2}G+K/2}))^2\big)^{1/2} \big( \E (h(\widetilde{\mu}_n(\T)))^2\big)^{1/2} \le c' \sqrt{c},\nonumber
\end{align}
for some $c' > 0$.
\end{proof}

Our proof of Theorem~\ref{thm:uniqueness} is based on the following two lemmas.
\begin{lemma}\label{lemma:diagonal_approximation}
  Let $(X_n) $ and $(\widetilde{X}_n)$ be two sequences of H\"older-regular Gaussian fields on $\T$. Assume that there exists a constant $K > 0$ such that the covariances satisfy
  \[\sup_{x,y \in \T} |\widetilde{C}_n(x,y) - C_n(x,y)| \le K < \infty
  \]
  for all $n\geq 1$. Assume also that both of the corresponding sequences of random measures $(\mu_n)$ and $(\widetilde{\mu}_n)$ converge in distribution to measures $\mu$ and $\widetilde{\mu}$ respectively, and that $\widetilde{\mu}$ is almost surely non-atomic. Then also $\mu$ is almost surely non-atomic.
\end{lemma}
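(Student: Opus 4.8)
The plan is to detect atoms through a single scalar functional that (i) vanishes in the limit exactly when the measure is non-atomic and (ii) can be written as a Gaussian multiplicative chaos on the product space $\T\times\T$, so that Kahane's inequality applies. For $\delta>0$ set $\Delta_\delta:=\{(x,y)\in\T\times\T : d(x,y)\le\delta\}$ and
\[ J_\delta(m):=(m\times m)(\Delta_\delta)=\int_\T\int_\T \mathbf{1}_{d(x,y)\le\delta}\,dm(x)\,dm(y). \]
Since $\Delta_\delta$ decreases to the diagonal as $\delta\to0$, monotone convergence gives $\inf_{\delta>0}J_\delta(m)=(m\times m)(\{x=y\})=\sum_a m(\{a\})^2$, so $m$ is non-atomic iff $J_\delta(m)\to0$. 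As the indicator of the closed set $\Delta_\delta$ is only upper semicontinuous, I would replace it by a continuous $g_\delta$ with $\mathbf{1}_{\Delta_\delta}\le g_\delta\le\mathbf{1}_{\Delta_{2\delta}}$ and work with the weak$^*$-continuous functional $\tilde J_\delta(m):=\iint g_\delta\,dm\,dm$, which satisfies $J_\delta\le\tilde J_\delta\le J_{2\delta}$ and hence detects non-atomicity in the same way.

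The heart of the argument is a moment comparison for $\tilde J_\delta$. Writing $Z_n(x,y):=X_n(x)+X_n(y)$ on $\T\times\T$, a direct computation shows that $\tilde J_\delta(\mu_n)=\int e^{Z_n-\frac12\E Z_n^2}\,d\sigma_n$ with $d\sigma_n:=g_\delta\,e^{C_n(x,y)}\,d\rho_n\otimes d\rho_n$, and similarly for $\widetilde\mu_n$ with $\widetilde Z_n$ and $\widetilde\sigma_n:=g_\delta\,e^{\widetilde C_n}\,d\rho_n\otimes d\rho_n$. Two facts make Kahane's inequality applicable exactly as in the proof of Lemma~\ref{lemma:convergence}: the covariance of $Z_n$ differs from that of $\widetilde Z_n$ by at most $4K$ (four entries, each controlled by $|C_n-\widetilde C_n|\le K$), so adding an independent Gaussian of variance $4K$ to $Z_n$ makes its covariance dominate that of $\widetilde Z_n$; and $\sigma_n\le e^K\widetilde\sigma_n$, which lets me replace $\sigma_n$ by $\widetilde\sigma_n$ at the cost of the factor $e^K$ using property (2) of the function $h$ from Lemma~\ref{lemma:auxiliary_function}. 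Applying $h$ (chosen for the tight sequence $\widetilde\mu_n\times\widetilde\mu_n$, so that $\sup_n\E h(\widetilde\mu_n(\T)^2)^4<\infty$) together with Kahane's inequality for the concave function $h^2$ yields, just as in Lemma~\ref{lemma:convergence}, a bound of the form
\[ \E\,h(\tilde J_\delta(\mu_n))\le C_0\big(\E\,h(\tilde J_\delta(\widetilde\mu_n))^2\big)^{1/2} \]
with $C_0$ independent of both $n$ and $\delta$.

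It remains to pass to the limit in $n$ and then in $\delta$. For fixed $\delta$ the functional $\tilde J_\delta$ is weak$^*$-continuous, so $\tilde J_\delta(\mu_n)\to\tilde J_\delta(\mu)$ and $\tilde J_\delta(\widetilde\mu_n)\to\tilde J_\delta(\widetilde\mu)$ in distribution. On the left I would use the Fatou (portmanteau) inequality for the nonnegative continuous functional $h\circ\tilde J_\delta$ to obtain $\E h(\tilde J_\delta(\mu))\le\liminf_n\E h(\tilde J_\delta(\mu_n))$; on the right, the bound $\tilde J_\delta(\widetilde\mu_n)\le\widetilde\mu_n(\T)^2$ gives $L^2$-boundedness, hence uniform integrability of $h(\tilde J_\delta(\widetilde\mu_n))^2$, so convergence in distribution upgrades to convergence of expectations and $\limsup_n\E h(\tilde J_\delta(\widetilde\mu_n))^2=\E h(\tilde J_\delta(\widetilde\mu))^2$. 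Combining these yields $\E h(\tilde J_\delta(\mu))\le C_0(\E h(\tilde J_\delta(\widetilde\mu))^2)^{1/2}$. Now let $\delta\to0$: since $\widetilde\mu$ is non-atomic, $\tilde J_\delta(\widetilde\mu)\to0$ almost surely, and dominated convergence (dominant $h(\widetilde\mu(\T)^2)^2$) forces $\E h(\tilde J_\delta(\widetilde\mu))^2\to0$, whence $\E h(\tilde J_\delta(\mu))\to0$. A Markov estimate $\P[J_\delta(\mu)\ge\eta]\le\P[\tilde J_\delta(\mu)\ge\eta]\le\E h(\tilde J_\delta(\mu))/h(\eta)$ then gives $\P[\sum_a\mu(\{a\})^2\ge\eta]=\lim_{\delta\to0}\P[J_\delta(\mu)\ge\eta]=0$ for every $\eta>0$, that is, $\mu$ is almost surely non-atomic.

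The main obstacle I expect is the product-space bookkeeping in the second paragraph: verifying that $\tilde J_\delta(\mu_n)$ is genuinely a chaos integral against a legitimate positive reference measure, tracking the factor $4K$ in the covariance comparison and the factor $e^K$ in the measure comparison, and arranging the auxiliary function $h$ so that a single clean inequality survives uniformly in $\delta$. The conceptual crux, by contrast, is simply the choice of the diagonal functional $J_\delta$, after which everything reduces to the same Kahane comparison mechanism already used for tightness.
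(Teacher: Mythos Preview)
Your proposal is correct and follows essentially the same route as the paper: lift to the product space $\T\times\T$, express the diagonal functional as a chaos integral for the sum field $X_n(x)+X_n(y)$, use the $4K$ covariance gap together with an independent Gaussian and the $e^K$ mismatch in the reference measure, and then apply Kahane's inequality with the auxiliary concave function $h$ from Lemma~\ref{lemma:auxiliary_function}. The only cosmetic difference is that the paper builds its diagonal test function as a finite sum $\sum_k f_k(x)f_k(y)$ coming from an $\varepsilon$-net (which also exploits the doubling constant to control overlaps), whereas you use a single continuous $g_\delta$ sandwiched between $\mathbf{1}_{\Delta_\delta}$ and $\mathbf{1}_{\Delta_{2\delta}}$; both devices serve the same purpose and the subsequent limit arguments (Skorokhod/uniform integrability in the paper, portmanteau/Fatou plus uniform integrability in your version) are equivalent.
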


\begin{proof}
  Let $G$ be an independent centered Gaussian random variable with variance $\E G^2=K$. Then the covariance of the field $X_n + G$ dominates that of the field $\widetilde{X}_n$. Define a field $U_n(x,y) := X_n(x) + X_n(y) + 2G$ on the product space $\T \times \T$. Its covariance is given by
  \begin{multline*}
    \E[U_n(x,y)U_n(x',y')] = \E[X_n(x)X_n(x')] + \E[X_n(y)X_n(y')] + \E[X_n(x)X_n(y')] \\
    + \E[X_n(y)X_n(x')] + 4K,
  \end{multline*}
  and therefore dominates the covariance of the field $V_n(x,y) := \widetilde{X}_n(x) + \widetilde{X}_n(y)$ given by
  \begin{multline*}
    \E[V_n(x,y)V_n(x',y')] = \E[\widetilde{X}_n(x)\widetilde{X}_n(x')] + \E[\widetilde{X}_n(y)\widetilde{X}_n(y')] + \E[\widetilde{X}_n(x)\widetilde{X}_n(y')] \\
    + \E[\widetilde{X}_n(y)\widetilde{X}_n(x')].
  \end{multline*}
  Define a measure $\rho_n'$ on $\T \times \T$ by setting
  \[d\rho_n'(x,y) = \sum_{k=1}^m f_k(x) f_k(y) e^{\E[X_n(x) X_n(y)]} d(\rho_n \otimes \rho_n)(x,y),\]
  where $f_1,\dots,f_m \in C(\T)$ is a fixed arbitrary finite collection of continuous functions.
  Observe that $\rho'_n$ is absolutely continuous with respect to the measure $\rho_n \otimes \rho_n$.
  By Kahane's convexity inequality we have
\begin{align*}
  & \E\left[h\big(\int_{\T \times \T} e^{U_n(x,y) - \frac{1}{2} \E[U_n(x,y)^2]} \, d\rho_n'(x,y)\big)\right] \\
  \le \; & \E\left[h\big(\int_{\T \times \T} e^{V_n(x,y) - \frac{1}{2} \E[V_n(x,y)^2]} \, d\rho'_n(x,y)\big)\right],\
\end{align*}
  where $h$ is the function from Lemma~\ref{lemma:auxiliary_function}  chosen for the sequence $(\widetilde \mu_n)$.
By Lemma~\ref{lemma:auxiliary_function} the left hand side is larger than
  \begin{align*}
    & \E\bigg[\min(1,e^{2G - 2K}) h\Big(\sum_{k=1}^m \int f_k(x) f_k(y) e^{X_n(x) + X_n(y) - \frac{1}{2} \E[X_n(x)^2] - \frac{1}{2}\E[X_n(y)^2]} \\
    &\phantom{aaaaaaaaaaaaaaaaaaaa} \cdot d(\rho_n \otimes \rho_n)(x,y)\Big)\bigg] \; \ge  \;
     A \E[h\big(\sum_{k=1}^m \mu_n(f_k)^2\big)]
  \end{align*}
  for some constant $A$ that depends only on $K$.
  Similarly the right-hand side is at most
  \begin{align*}
    & \E h\Bigg(\sum_{k=1}^m \int f_k(x) f_k(y) \exp\bigg(\widetilde{X}_n(x) + \widetilde{X}_n(y) - \frac{1}{2} \E[\widetilde{X}_n(x)^2] - \frac{1}{2}\E[\widetilde{X}_n(y)^2] \\
    & \phantom{aaaaaaaaaaa} - \E[\widetilde{X}_n(x) \widetilde{X}_n(y)] + \E[X_n(x) X_n(y)]\bigg) \, d(\rho_n \otimes \rho_n)(x,y)\Bigg)  \\
    & \le \max(1,e^{K}) \E h\Bigg(\sum_{k=1}^m \int f_k(x) f_k(y) \exp\bigg(\widetilde{X}_n(x) + \widetilde{X}_n(y) \\
    & \phantom{aaaaaaaaaaa} - \frac{1}{2} \E[\widetilde{X}_n(x)^2] - \frac{1}{2}\E[\widetilde{X}_n(y)^2]\bigg) \, d(\rho_n \otimes \rho_n)(x,y)\Bigg)  \\
    & =\max(1,e^K) \E h\Big(\sum_{k=1}^m \widetilde{\mu}_n(f_k)^2\Big).
  \end{align*}
 Thus we have the inequality
  \[\E h\Big(\sum_{k=1}^m \mu_n(f_k)^2\Big) \le c \E h\Big(\sum_{k=1}^m \widetilde{\mu}_n(f_k)^2\Big)\]
  for some constant $c > 0$ depending only on $K$.

  By Skorokhod's representation theorem we can assume that $\mu_n(f_1),\dots,\mu_n(f_m)$ and $\widetilde{\mu}_n(f_1),\dots,\widetilde{\mu}_n(f_m)$ converge almost surely. Note that by condition (3) of Lemma~\ref{lemma:auxiliary_function} the sequence $h(\sum_{k=1}^m \widetilde{\mu}_n(f_k)^2)$ is uniformly integrable. We have deduced the uniform (over $m$ and the functions $f_k$) estimate
  \[\E h\Big(\sum_{k=1}^m \mu(f_k)^2\Big) \le c \E h\Big(\sum_{k=1}^m \widetilde{\mu}(f_k)^2\Big).\]

  We next make a specific choice for the functions $f_k$. Given $\varepsilon >0$ choose a maximal set $x_1, x_2, \dots, x_m \in \T$ so that $|x_i - x_j| \ge \frac{\varepsilon}{2}$ for every $i \neq j$, where $m=m(\varepsilon)$. For each $1 \le k \le m$ define the continuous function $f_k$ by setting
  \[f_k(x) := \max\Big(0,\min\big(2 \frac{\varepsilon - d(x, x_k)}{\varepsilon}, 1 \big) \Big).\]
  Denote the diagonal of the product space $\T\times \T$ by $\Delta := \{(x,x) \in \T \times \T\}$ and its $2\varepsilon$-neighbourhood by $\Delta_{2\varepsilon} = \{(x,y) \in \T \times \T : d_{\T \times \T}((x,y), \Delta) < 2\varepsilon\}$. Here we use the metric $d_{\T \times \T}((x,y),(x',y')) := \max \{d(x,x'), d(y,y')\}$.
  Then for any measure $\lambda\in \M^+$ we have
  \begin{align*}
    (\lambda \otimes \lambda)(\Delta) & \le \sum_{k=1}^m \lambda(f_k)^2 \le \sum_{k=1}^m \lambda(B(x_k, \varepsilon))^2 \\
                          & \le N (\lambda \otimes \lambda)(\bigcup_{k=1}^m B(x_k, \varepsilon) \times B(x_k, \varepsilon)) \le N (\lambda \otimes \lambda)(\Delta_{2\varepsilon}),
  \end{align*}
 where $N > 0$ measures the maximal overlap of the balls $B(x_k, \varepsilon)$, and depends only on the doubling constant of the space $\T$.
  In particular for every $\varepsilon > 0$ we have
  \[\E h\big((\mu \otimes \mu)(\Delta)\big)] \le \E h\big(\sum_{k=1}^m \mu(f_k)^2\big)\le c\, \E h\big(\sum_{k=1}^m \widetilde{\mu}(f_k)^2\big) \le c N \E h\big((\widetilde{\mu} \otimes \widetilde{\mu})(\Delta_{2\varepsilon})\big).\]
  Letting $\varepsilon \to 0$ lets us conclude that $(\mu \otimes \mu)(\Delta) = 0$ almost surely, which entails that $\mu$ is non-atomic almost surely.
\end{proof}

\begin{remark}
One should note that the above proof is not valid as such if one just assumes that the dominance of the covariance is valid in one direction only. In a sense we perform both a convexity and a concavity argument while deriving the required inequality.
\end{remark}

\begin{lemma}\label{lemma:construction_of_Z}
    There exists a collection $Z_\varepsilon$ $(\varepsilon > 0)$ of H\"older-regular Gaussian
    fields on $\T$ such that $C_\varepsilon(x,y) := \E[Z_\varepsilon(x) Z_\varepsilon(y)]$ satisfies $C_\varepsilon(x,x)
    = 1$ for all $x \in \T$ and $\int_\T e^{Z_\varepsilon(x) - \frac{1}{2} \E[Z_\varepsilon{(x)}^2]} \, d\lambda(x)$ converges to $\lambda(\T)$ in
    $L^2(\Omega)$ for any non-atomic finite measure $\lambda \in \M^+(\T)$ as $\varepsilon \to 0$. Moreover, we have the bound
    \[\E\left|\int_\T e^{Z_\varepsilon(x) - \frac{1}{2} \E[Z_\varepsilon{(x)}^2]} \, d\lambda (x) - \lambda (\T)\right|^2 \le c (\lambda \otimes \lambda)
    (\{(x,y) \in \T : |x-y| < 2\varepsilon\})\]
    for some constant $c > 0$.
\end{lemma}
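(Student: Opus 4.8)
The plan is to reduce the statement to a pointwise design condition on the covariance and then to build a field realising it. Writing $M_\varepsilon := \int_\T e^{Z_\varepsilon(x) - \frac12 \E[Z_\varepsilon(x)^2]}\,d\lambda(x)$ and using the normalisation $C_\varepsilon(x,x) = 1$, the first moment is $\E M_\varepsilon = \lambda(\T)$, while the standard Gaussian computation (each pair $Z_\varepsilon(x) + Z_\varepsilon(y)$ has variance $2 + 2C_\varepsilon(x,y)$, and Tonelli applies by positivity) gives
\[
\E M_\varepsilon^2 = \int_\T\int_\T e^{C_\varepsilon(x,y)}\,d\lambda(x)\,d\lambda(y),
\]
so that
\[
\E\,|M_\varepsilon - \lambda(\T)|^2 = \int_\T\int_\T \big(e^{C_\varepsilon(x,y)} - 1\big)\,d\lambda(x)\,d\lambda(y).
\]
Hence it suffices to construct, for each $\varepsilon > 0$, a H\"older-regular field whose covariance satisfies $C_\varepsilon(x,x) = 1$, obeys $0 \le C_\varepsilon(x,y) \le 1$ throughout, and vanishes whenever $d(x,y) \ge 2\varepsilon$. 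Granting this, the integrand is supported in $\{d(x,y) < 2\varepsilon\}$ and bounded there by $e - 1$, which yields the claimed estimate with $c = e - 1$; the $L^2$-convergence follows because $(\lambda\otimes\lambda)(\{d(x,y) < 2\varepsilon\}) \downarrow (\lambda\otimes\lambda)(\Delta) = 0$ as $\varepsilon \to 0$ for any non-atomic finite $\lambda$.

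To produce such a field I would use a Lipschitz partition of unity adapted to the scale $\varepsilon$. Fix a maximal $\tfrac{\varepsilon}{2}$-separated set $z_1, \dots, z_N \in \T$, which is finite by compactness and whose balls $B(z_i, \tfrac{\varepsilon}{2})$ cover $\T$ by maximality. Set $\psi_i(x) := \max(0, 1 - d(x,z_i)/\varepsilon)$, so that each $\psi_i$ is $\tfrac1\varepsilon$-Lipschitz, nonnegative, and supported in $B(z_i, \varepsilon)$; for every $x$ the nearest centre satisfies $\psi_i(x) > \tfrac12$, so $S(x) := \sum_i \psi_i(x)^2$ is bounded below by $\tfrac14$, uniformly in $x$. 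Putting $\phi_i := \psi_i / \sqrt{S}$ we obtain finitely many nonnegative Lipschitz functions with $\sum_i \phi_i(x)^2 = 1$ and each $\phi_i$ supported in $B(z_i,\varepsilon)$. Finally let $g_1, \dots, g_N$ be independent standard Gaussians and define
\[
Z_\varepsilon(x) := \sum_{i=1}^N \phi_i(x)\,g_i .
\]

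It then remains to verify the required properties, all of which are immediate. The covariance is $C_\varepsilon(x,y) = \sum_i \phi_i(x)\phi_i(y)$, which is a genuine positive semidefinite kernel by construction, satisfies $C_\varepsilon(x,x) = \sum_i \phi_i(x)^2 = 1$, is nonnegative since the $\phi_i$ are, and is bounded by $1$ via Cauchy--Schwarz. Moreover $\phi_i(x)\phi_i(y) \neq 0$ forces $x,y \in B(z_i,\varepsilon)$, whence $C_\varepsilon(x,y) = 0$ once $d(x,y) \ge 2\varepsilon$, exactly as needed. H\"older-regularity follows since $\E|Z_\varepsilon(x) - Z_\varepsilon(y)|^2 = \sum_i (\phi_i(x) - \phi_i(y))^2 \le N L_\varepsilon^2\, d(x,y)^2$, where $L_\varepsilon$ is the Lipschitz constant of the $\phi_i$ (finite because $S$ is bounded away from $0$); thus $(x,y)\mapsto\sqrt{\E|Z_\varepsilon(x)-Z_\varepsilon(y)|^2}$ is Lipschitz and in particular $1$-H\"older for each fixed $\varepsilon$.

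The only genuinely delicate point is building the partition of unity so that the covariance is supported in $\{d(x,y)<2\varepsilon\}$ while retaining unit variance and Lipschitz regularity; the uniform lower bound $S \ge \tfrac14$, obtained by basing the construction on an $\tfrac{\varepsilon}{2}$-net rather than an $\varepsilon$-net, is precisely what makes the normalisation $\phi_i = \psi_i/\sqrt S$ harmless. Everything else is the routine moment computation of the first paragraph, and I expect no difficulty beyond bookkeeping. Note that the doubling hypothesis is not actually required here, though it would give control on the number of overlapping balls if one wished for constants independent of $\varepsilon$.
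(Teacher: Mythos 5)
Your proposal is correct and is essentially the paper's own proof: the paper likewise fixes a maximal $\tfrac{\varepsilon}{2}$-separated net, attaches independent standard Gaussians to bump functions supported in the balls $B(a_i,\varepsilon)$ (there via $Z_\varepsilon=\sum_i A_i\sqrt{p_i}$ for a Lipschitz partition of unity $p_i$, which coincides with your construction upon setting $p_i=\phi_i^2$), and finishes with the identical second-moment identity, the support observation for $d(x,y)\ge 2\varepsilon$, and the bound with constant $e-1$. The only (harmless) difference is your $\ell^2$-normalization $\phi_i=\psi_i/\sqrt{S}$, which keeps the coefficient functions Lipschitz, whereas the paper's $\sqrt{p_i}$ is merely $\tfrac12$-H\"older; either suffices for H\"older-regularity.
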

\begin{proof}
  Fix a sequence of independent standard Gaussian random variables $A_i$, $i \ge 1$.
  Let $\varepsilon > 0$ and choose a maximal set of points $a_1,\dots,a_n$ in $\T$ such that $|a_i - a_j| \ge
  \varepsilon /2$ for all $1 \le i < j \le n$. Let $B_i$ be the ball $B(a_i, \varepsilon)$. Then the balls $B_i$ cover
  $\T$ and we may form a Lipschitz partition of unity $p_1,\dots,p_n$ with respect to these balls. That is, $p_1,\dots,p_n$ are
  non-negative Lipschitz continuous functions such that $p_i(x) = 0$ when $x \notin B(a_i,\varepsilon)$ and for all $x \in \T$
  we have $\sum_{i=1}^n p_i(x) \equiv 1$.

  Define the field $Z_\varepsilon(x)$ by setting
  \[Z_\varepsilon(x) = \sum_{i=1}^n A_i \sqrt{p_i(x)},\]
  whence the covariance of $Z_\varepsilon$ is given by
  \[C_\varepsilon(x,y) := \E[Z_\varepsilon(x) Z_\varepsilon(y)] = \sum_{i=1}^n \sqrt{p_i(x) p_i(y)}.\]
  By the Cauchy-Schwartz inequality we see that
  \[C_\varepsilon(x,y) \le \sqrt{\sum_{i=1}^n p_i(x)}\sqrt{\sum_{i=1}^n p_i(y)} = 1\]
  for all $x,y \in \T$. Futhermore $C_\varepsilon(x,x) = 1$ for all $x \in \T$.

  Now a direct computation gives
  \[\E\left|\int_{\T} e^{Z_\varepsilon(x) - \frac{1}{2} \E[Z_\varepsilon{(x)}^2]} \, d\lambda (x) - \lambda (\T)\right|^2 = \int_{\T}
  \int_{\T} \big( e^{C_\varepsilon(x,y)} - 1 \big) \, d\lambda (x) \, d\lambda (y).\]
  Clearly when $|x-y| \ge 2\varepsilon$, we have $|x - a_i| + |y - a_i| \ge 2\varepsilon$, so one of $x$ or $y$ lies
  outside of $B_i$ for every $1 \le i \le n$, which implies that $C_\varepsilon(x,y) = 0$. Therefore we have
  \begin{align*}
      \int_{\T} \int_{\T} (e^{C_\varepsilon(x,y)} - 1) \, d\lambda (x) \, d\lambda(y) & = \int_{\{|x - y| < 2\varepsilon\}}
    \big( e^{C_\varepsilon(x,y)} - 1 \big) \, d(\lambda \otimes \lambda)(x,y) \\
    & \le(e - 1) (\lambda \otimes \lambda)(\{(x,y) : |x-y| < 2\varepsilon\}) ,
  \end{align*}
  and the right-hand side goes to $0$ as $\varepsilon \to 0$, since the non-atomicity of $\lambda$ guarantees that $(\lambda \otimes \lambda)(\{(x,x) : x \in \T\}) = 0$.
\end{proof}

\begin{proof}[Proof of Theorem~\ref{thm:uniqueness}]
    We will first assume that both sequences $(\mu_n)$ and $(\widetilde{\mu}_n)$ converge in distribution and show how to get rid of this
    condition at the end.

    Let $Z_\varepsilon$ be the independent field constructed as in Lemma~\ref{lemma:construction_of_Z}. Let $Y_\varepsilon(x) =
    \sqrt{K} Z_\varepsilon(x) + \varepsilon G$, where $G$ is an independent standard Gaussian random variable. A standard argument utilizing the continuity of the covariance of $Z_\varepsilon$ and compactness yields that for all large
    enough $n$ the covariance of the field $X_n+ Y_\varepsilon$ is greater than the covariance of the field
    $\widetilde{X}_n$ at every point $(x,y) \in \T \times \T$.

    We may assume, towards notational simplicity, that our probability space has the product form $\Omega = \Omega_1 \times \Omega_2$, and for $(\omega_1,\omega_2) \in \Omega$ one has
    $X_n((\omega_1,\omega_2)) = X_n(\omega_1)$ and $\widetilde{X}_n((\omega_1,\omega_2)) = \widetilde{X}_n(\omega_1)$ together with $Y_\varepsilon((\omega_1,\omega_2)) = Y_\varepsilon(\omega_2)$ for all $\varepsilon > 0$.
    Let $\varphi \colon \left[0,\infty\right) \to \left[0,\infty\right)$ be a bounded, continuous and concave function. Then by Kahane's convexity inequality we have
    \begin{align*}
       \E\left[\varphi\left(\int_T f(x) e^{X_n(x) + Y_\varepsilon(x) - \frac{1}{2} \E[X_n(x)^2] - \frac{1}{2}
       \E[Y_\varepsilon(x)^2]} \, d\rho_n(x))\right)\right] & \le \\
                    \E\left[\varphi\left(\int_\T f(x) e^{\widetilde{X}_n -
                    \frac{1}{2} \E[\widetilde{X}_n(x)^2]} \, d\rho_n(x)\right)\right]
    \end{align*}
    for all non-negative $f \in C(\T)$. Since for all fixed $\omega_2 \in \Omega_2$, $Y_\varepsilon(\omega_2)(x)-\frac{1}{2}
        \E[Y_\varepsilon(x)^2]$ is a continuous function on $\T$, we see that
    \begin{align*}
        \E_{\Omega_1}\left[\varphi\left(\int_\T f(x) e^{X_n(x) + Y_\varepsilon(x) - \frac{1}{2} \E[X_n(x)^2] - \frac{1}{2}
        \E[Y_\varepsilon(x)^2]} \, d\rho_n(x)\right)\right] & \to \\
                    \E_{\Omega_1}\left[\varphi\left(\int_\T f(x) e^{Y_\varepsilon(x) - \frac{1}{2}
                    \E[Y_\varepsilon(x)^2]} \, d\mu(x) \right)\right]
    \end{align*}
    as $n \to \infty$.
    In particular we have by Fatou's lemma that
    \begin{align}\label{eq:fatou}
       & \E\left[\varphi\left(\int_\T f(x) e^{Y_\varepsilon(x) - \frac{1}{2} \E[Y_\varepsilon(x)^2]} \,
        d\mu(x)\right)\right] \\
       =& \E_{\Omega_2} \lim_{n \to \infty} \E_{\Omega_1} \left[\varphi\left(\int_\T f(x)
      e^{X_n(x) + Y_\varepsilon(x) - \frac{1}{2} \E[X_n(x)^2] - \frac{1}{2} \E[Y_\varepsilon(x)^2]} \, d\rho_n(x) \right)\right] &\nonumber \\
       \le&\; \liminf_{n \to \infty} \E\left[\varphi\left(\int_\T f(x) e^{\widetilde{X}_n - \frac{1}{2}
        \E[\widetilde{X}_n(x)^2]} \, d\rho_n(x)\right)\right] \nonumber\\
             =&\;       \E\left[\varphi\left(\int_\T f(x) \, d\widetilde{\mu}(x)\right)\right].\nonumber
    \end{align}
    Accoding to Lemma~\ref{lemma:construction_of_Z}, for almost every $\omega_1 \in \Omega_1$ we know that
    \begin{equation}\label{eq:convergence_g}
    g_\varepsilon:= \int_\T f(x) e^{Y_\varepsilon(x) - \frac{1}{2}
    \E[Y_\varepsilon(x)^2]} \, d\mu(x) \;\; \underset{\varepsilon\to 0}\longrightarrow \;\;g:=\int_\T f(x) \, d\mu(x)
  \end{equation}
  in $L^2(\Omega_2)$. We next note that for a suitable fixed sequence $\varepsilon_k \to 0$ this convergence also happens for almost every $\omega_2 \in \Omega_2$. By Lemma~\ref{lemma:construction_of_Z} we have the estimate
  \[\|g_\varepsilon - g\|^2_{L^2(\Omega_2)} \le c \|f\|_{C(\T)}^2 (\mu \otimes \mu) (\{|x-y| < 2\varepsilon\}),\]
     where $c > 0$ is some constant. Choose the sequence $\varepsilon_k$ so that
     \[\P[c \|f\|_{C(\T)}^2 (\mu \otimes \mu)(\{|x-y| < 2\varepsilon_k\}) > 4^{-k}] \le \frac{1}{k^2},\]
     which is possible because $(\mu \otimes \mu)(\{(x,x) : x \in \T\}) = 0$ almost surely.  By the Borel--Cantelli lemma there exists a random index $k_0(\omega_1) \ge 1$ such that with probability $1$ we have
     \[\|g_{\varepsilon_k} - g\|^2_{L^2(\Omega_2)} \le c \|f\|_{C(\T)}^2 (\mu \otimes \mu)(\{|x-y| < 2\varepsilon_k\}) \le 4^{-k}\]
     for all $k \ge k_0(\omega_1)$. Now a standard argument verifies the almost sure convergence in \eqref{eq:convergence_g}.

    The almost sure convergence finally lets us to conclude for all non-negative $f\in C(T)$ and non-negative, bounded, continuous and concave $\varphi$ that
    \[\E\left[\varphi\left(\int_\T f(x) \, d\mu(x)\right)\right] \le \E\left[\varphi\left(\int_\T f(x) \,
    d\widetilde{\mu}(x)\right)\right].\]
    Similar inequality also holds with the measures $\mu$ and $\widetilde{\mu}$ switched, so we actually have
    \[\E\left[\varphi\left(\int_\T f(x) \, d\mu(x)\right)\right] = \E\left[\varphi\left(\int_\T f(x) \,
    d\widetilde{\mu}(x)\right)\right].\]
    It is well known that this implies $\mu \sim \widetilde{\mu}$.

    Let us now finally observe that one can drop the assumption that both families of measures converge. By
    Lemma~\ref{lemma:convergence} and Prokhorov's theorem we know that every subsequence $\mu_{n_k}$ has a further subsequence
    that converges in distribution to a random measure. Lemma~\ref{lemma:diagonal_approximation} ensures that the limit
    measure of any converging sequence has almost surely no atoms, and hence by the previous part of the proof
    this limit must equal $\widetilde{\mu}$. This implies that the original sequence must converge to $\widetilde{\mu}$ as well.
\end{proof}

\begin{remark}\label{rmk:loosen}
  Our proof of Theorem~\ref{thm:uniqueness} may be modified in a way that allows the conditions \eqref{ehto1} and \eqref{ehto2} to be somewhat relaxed. E.g.\ in the case of subcritical logarithmically correlated fields it is basically enough to have for $\varepsilon > 0$ the inequality
  \[|C_n(s,t) - \widetilde{C}_n(s,t)| \le \varepsilon(1 + \log^+ \frac{1}{|s-t|})\]
  for $n \ge n(\varepsilon)$. Analogous results exist also for the critical chaos, but in this case the specific conditions are heavily influenced by the approximation sequence $X_n$ one uses.
\end{remark}

\section{Convergence in probability}\label{sec:inprobability}

\noindent In the previous section convergence was established in distribution, which often suffices, and the main focus was on the uniqueness of the limit. In the present section we establish the convergence also in probability, assuming that this is true for the comparison sequence $\widetilde{\mu}_n$, which is constructed using approximating sequence $(\widetilde{X}_n)$ that has independent increments. Convergence in probability in the subcritical case was also discussed in \cite{shamov2014gaussian}, and our Theorem~\ref{thm:convergence_in_probability} below can be seen as an alternative way to approach the question. For the proof we need the following two auxiliary observations.

\begin{lemma}\label{lemma:convergence_in_distribution_implies_convergence_in_probability}
  Let ${\mathcal F}_1\subset {\mathcal F}_2\subset\ldots$ be an increasing sequence of sigma-algebras and denote ${\mathcal F}_\infty:=\sigma(\bigcup_{j=1}^\infty{\mathcal F}_k) \subset \mathcal{F}$.
  Assume that the real random variables $X, X_1,X_2,\ldots$ satisfy: $X$ is ${\mathcal F}_\infty$-measurable, and for any ${\mathcal F}_j$ measurable set $E$ {\rm (}with arbitrary $j\geq 1${\rm )} it holds that
\begin{equation}\label{eq:ehto}
\chi_EX_k\overset{d}{\longrightarrow} \chi_EX\qquad \textrm{as}\quad k\to \infty.
\end{equation}
Then $X_k\overset{P}{\longrightarrow} X$ as $k\to\infty$.
\end{lemma}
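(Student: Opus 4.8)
The plan is to upgrade the one-sided hypothesis \eqref{eq:ehto} into the joint convergence $(X_k, X) \overset{d}{\to} (X, X)$, from which the conclusion is immediate: the continuous map $(u,v) \mapsto u-v$ gives $X_k - X \overset{d}{\to} 0$, and convergence in distribution to a constant is equivalent to convergence in probability. Thus the whole problem reduces to promoting the scalar assumption about the indicators $\chi_E X_k$ into a statement about how $X_k$ couples with the limit $X$.

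First I would record the elementary identity $h(\chi_E X_k) = \chi_E\, h(X_k) + \chi_{E^c}\, h(0)$, valid for any bounded continuous $h \colon \reals \to \reals$ and any $E \in \mathcal{F}_j$. Taking expectations and using \eqref{eq:ehto} (which gives $\E[h(\chi_E X_k)] \to \E[h(\chi_E X)]$) the $h(0)\,\P(E^c)$ terms cancel and one obtains $\E[\chi_E\, h(X_k)] \to \E[\chi_E\, h(X)]$. By linearity over simple functions and a routine $L^1$-approximation, exploiting the boundedness of $h$, this extends to $\E[W\, h(X_k)] \to \E[W\, h(X)]$ for every bounded $\mathcal{F}_j$-measurable random variable $W$ and every fixed $j$.

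The key step, and the one I expect to be the main obstacle, is to remove the dependence on $j$ and reach arbitrary bounded $\mathcal{F}_\infty$-measurable $W$. Here I would invoke L\'evy's upward martingale convergence theorem: for such $W$ the conditional expectations $W_m := \E[W \mid \mathcal{F}_m]$ are $\mathcal{F}_m$-measurable, uniformly bounded by $\|W\|_\infty$, and satisfy $W_m \to W$ in $L^1$. A standard three-term estimate then closes the argument: bound $|\E[W h(X_k)] - \E[W h(X)]|$ by $\|h\|_\infty\|W - W_m\|_{L^1}$ (twice) plus $|\E[W_m h(X_k)] - \E[W_m h(X)]|$, choose $m$ large to control the approximation errors, and let $k \to \infty$ with $m$ fixed using the previous paragraph. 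This is the only place where the increasing structure of the filtration and the $\mathcal{F}_\infty$-measurability of $X$ genuinely enter.

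Finally, since $X$ is $\mathcal{F}_\infty$-measurable I may take $W = g(X)$ for bounded continuous $g$, obtaining $\E[g(X)\,h(X_k)] \to \E[g(X)\,h(X)]$ for all bounded continuous $g,h$. Specializing $g$ and $h$ to complex exponentials shows that the joint characteristic functions of $(X_k, X)$ converge to those of $(X, X)$, whence $(X_k, X) \overset{d}{\to} (X, X)$. As noted at the outset, applying $(u,v)\mapsto u-v$ yields $X_k - X \overset{d}{\to} 0$ and therefore $X_k \overset{P}{\to} X$, which is the claim. I note in passing that the argument never uses adaptedness of the $X_k$ to $\mathcal{F}_k$; only the hypothesis \eqref{eq:ehto} and the measurability of the limit $X$ are needed.
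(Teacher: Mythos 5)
Your proof is correct, but it takes a genuinely different route from the paper's. Both arguments rest on the same two pillars --- martingale convergence to pass from $\mathcal{F}_j$ to $\mathcal{F}_\infty$, and the $\mathcal{F}_\infty$-measurability of $X$ to couple $X_k$ with its limit --- yet they deploy them differently. The paper works at the level of sets and events: it approximates an $\mathcal{F}_\infty$-measurable set $E$ by the $\mathcal{F}_j$-measurable sets $\{\E(\chi_E \mid \mathcal{F}_j) > 1/2\}$, then chooses $E = X^{-1}(I_j)$ for a fine partition $I_1, \dots, I_\ell$ of $(-M,M]$ and applies the Portmanteau theorem to each convergence $\chi_{E}X_k \overset{d}{\to} \chi_{E}X$ to get $\P(\{X \in I_j\} \cap \{X_k \in I_j\}) \to \P(X \in I_j)$, finally summing over $j$ to bound $\P(|X - X_k| > \varepsilon)$ directly. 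This forces some bookkeeping: a truncation level $M$ with $\P(|X|=M)=0$, partition endpoints of zero $X$-probability, and a separate argument for the interval containing $0$ (since the indicator multiplication cannot distinguish $\{X_k = 0\}$ from $E^c$). You instead work at the level of test functions: your identity $h(\chi_E X_k) = \chi_E h(X_k) + \chi_{E^c}h(0)$ absorbs the zero-value ambiguity once and for all, L\'evy's upward theorem replaces the set-approximation step, and the choice $W = g(X)$ with complex exponentials upgrades everything to joint convergence $(X_k, X) \overset{d}{\to} (X,X)$, after which the continuous mapping theorem and the standard fact about distributional convergence to a constant finish the job. Your route is more conceptual and cleaner --- it isolates the general principle that $X_k \overset{P}{\to} X$ is equivalent to joint convergence $(X_k,X) \overset{d}{\to} (X,X)$, and it avoids all endpoint and truncation care --- at the price of invoking the L\'evy continuity theorem; the paper's route is more elementary and self-contained, using nothing beyond Portmanteau. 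Both are complete proofs.
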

\begin{proof}
We first verify that \eqref{eq:ehto} remains true also if the set $E$ is just ${\mathcal F}_\infty$-measurable.
For that end define $h_j := \E(\chi_E|{\mathcal F}_j)$ and construct an ${\mathcal F}_\infty$-measurable approximation $E_j:=h_j^{-1}((1/2,1])$. The martingale convergence theorem yields that $\P (E_j\Delta E)\to 0$ as $j\to\infty$. Since the claim holds for each $E_j$, it also follows for the set $E$ by a standard approximation argument.

Let us then establish the stated convergence in probability.
Fix $\varepsilon >0$ and pick $M>0$ large enough so that $\P (|X|>M/2)\leq \varepsilon/2$, and such that $\P (|X|=M)=0$.
Then for some $k_0$ we have that $\P (|X_k|\geq M)\leq \varepsilon$ if $k\geq k_0$.
Divide the interval $(-M,M]$ into non overlapping half open intervals $I_1,\ldots , I_\ell$ of length less than $\varepsilon/2$ and denote $E_j:=X^{-1}(I_j)$ for $j=1,\ldots , \ell$.
In the construction we may assume that $0$ is the center point of one of these intervals and $\P(X=a)=0$ if $a$ is an endpoint of any of the intervals.
We fix $j$ and apply condition \eqref{eq:ehto}
to deduce that $\chi_{E_j}X_k\overset{d}{\longrightarrow} \chi_EX$ as $k\to \infty$.
Assume first that $0\not\in I_j$.
Then the Portmonteau theorem yields that $\lim_{k\to\infty}\P (\chi_{E_j}X_k\in I_j)=\P (\chi_{E_j}X\in I_j)$, or in other words
\[\P (\{X\in I_j\}\cap\{X_k\in I_j\})\to \P(X\in I_j)\quad\textrm{as}\quad k\to \infty.\]

In particular, for large enough $k$ we have that
\begin{equation}\label{eq:ehto2}
  \P\big(E_j\cap (|X-X_k|>\varepsilon)\big)\leq \frac{\varepsilon}{2\ell}
\end{equation}
If $0\in I_j$ we obtain in a similar vein that $\lim_{k\to\infty}\P(\chi_{E_j}X_k\in (I_j)^c)=\P (\chi_{E_j}X\in (I_j)^c)=0$, or in other words
$\P (\{X\in I_j\} \cap \{X_k\in I_j^c\})\to 0,$ so that we again get that $\P\big(E_j\cap (|X-X_k|>\varepsilon\big)\leq \frac{\varepsilon}{2\ell}$ for large enough $k$. By summing the obtained inequalities for $j=1,\ldots ,\ell$ and observing that $\P(\bigcup_{k=1}^\ell E_k)>1-\varepsilon/2$ we deduce for large enough $k$ the inequality $\P(|X-X_k|>\varepsilon)<\varepsilon$, as desired.
\end{proof}

\begin{lemma}\label{lemma:measure_equal_in_prob}
  Let $X$ be a H\"older-regular Gaussian field on $\T$ that is independent of the random measures $\mu$ and $\nu$ on $\T$.

  \begin{enumerate}[label={\bf (\roman*)}]
    \item If $e^X \mu \sim e^X \nu$, then also $\mu \sim \nu$.

    \item If $(\mu_n)$ is a sequence of random measures such that the sequence $(e^X \mu_n)$ converges in distribution, then also the sequence $(\mu_n)$ converges in distribution.
  \end{enumerate}
\end{lemma}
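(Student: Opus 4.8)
The plan is to prove part (i) by turning the distributional identity $e^X\mu\sim e^X\nu$ into an identity for the Laplace functionals of $\mu$ and $\nu$, to settle the resulting question by an injectivity property of Gaussian convolution, and then to deduce (ii) from (i) together with a tightness argument. The one structural fact I use repeatedly is that the law of a random measure on the compact space $\T$ is determined by its Laplace functional $\theta\mapsto \E\exp(-\int_\T\theta\,d\mu)$, where $\theta$ ranges over the nonnegative elements of $C(\T)$.

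For (i), I first note that $\mu,\nu$ are a.s.\ finite and that $L_\mu(\theta):=\E\exp(-\int_\T\theta\,d\mu)$ is a bounded continuous functional of nonnegative $\theta\in C(\T)$ (dominated convergence, using $\int|\theta-\theta'|\,d\mu\le\mu(\T)\|\theta-\theta'\|_\infty$). Set $D:=L_\mu-L_\nu$. The hypothesis gives, for every nonnegative $\theta\in C(\T)$, the equality $\E\exp(-\int_\T\theta e^X\,d\mu)=\E\exp(-\int_\T\theta e^X\,d\nu)$; conditioning on $X$, which is independent of each of $\mu$ and $\nu$, this reads $\E\,D(\theta e^X)=0$. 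Taking $\theta=e^{h}$ with $h\in C(\T)$ arbitrary and writing $\Phi(u):=D(e^{u})$, a bounded continuous function on $C(\T)$, I obtain the clean reformulation
\[\int_{C(\T)}\Phi(h+w)\,d\gamma(w)=0\qquad\text{for all }h\in C(\T),\]
where $\gamma$ is the law of $X$, a centered Gaussian Radon measure on the separable Banach space $C(\T)$ (Lemma~\ref{lemma:holder_regular_cont} places $X$ in $C(\T)$). The goal is then to conclude $\Phi\equiv 0$: this gives $L_\mu=L_\nu$ first for strictly positive test functions and, by letting $\theta+\varepsilon\downarrow\theta$, for all nonnegative $\theta\in C(\T)$, whence $\mu\sim\nu$.

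The heart of the matter, and the step I expect to be the main obstacle, is the injectivity assertion that convolution with the nondegenerate Gaussian $\gamma$ annihilates no nonzero bounded continuous $\Phi$. I would prove it by finite-dimensional reduction. Fixing $h_0\in C(\T)$ and replacing $\Phi$ by $\Phi(h_0+\,\cdot\,)$, it suffices to show $\Phi(h_0)=0$. Expand $X=\sum_k\xi_k e_k$ along an orthonormal basis $(e_k)\subset C(\T)$ of the Cameron--Martin space with $\xi_k$ i.i.d.\ standard Gaussians, so that $\gamma=\gamma_n*\gamma^n$ splits into the law $\gamma_n$ of $\sum_{k\le n}\xi_k e_k$ on $E_n:=\operatorname{span}(e_1,\dots,e_n)$ and the law $\gamma^n$ of the tail. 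Putting $\Phi^{(n)}:=\int_{C(\T)}\Phi(\,\cdot\,+w)\,d\gamma^n(w)$, which is again bounded continuous, the reformulated identity restricted to shifts in $E_n$ says exactly that $\Phi^{(n)}|_{E_n}$ convolved with the nondegenerate finite-dimensional Gaussian $\gamma_n$ vanishes identically; injectivity of the Gaussian (Weierstrass) transform on bounded functions, which follows from the nonvanishing of $\widehat{\gamma_n}(\xi)=e^{-|\xi|^2/2}$ and local division of Fourier transforms, forces $\Phi^{(n)}|_{E_n}\equiv 0$, in particular $\Phi^{(n)}(0)=\E\,\Phi(\sum_{k>n}\xi_k e_k)=0$. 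Since the tail tends to $0$ almost surely in $C(\T)$, dominated convergence gives $\Phi^{(n)}(0)\to\Phi(0)=\Phi(h_0)$, so $\Phi(h_0)=0$. The delicate points are the legitimacy of the Gaussian series representation in $C(\T)$ and the infinite-to-finite-dimensional passage; the genuinely nontrivial analytic input is the deconvolution itself, reflecting the fact, illustrated by the failure of any naive almost-sure cancellation of $e^{-X}$, that $X$ can only be removed probabilistically, through the nonvanishing of the Gaussian characteristic functional.

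Finally, for (ii) I would argue by tightness together with (i). Tightness of $(\mu_n)$ follows from that of $(e^X\mu_n)$: with $m:=\min_{\T}X$ (a.s.\ finite) one has $\mu_n(\T)\le e^{-m}(e^X\mu_n)(\T)$, and since $\{AB>u\}\subseteq\{A>\sqrt u\}\cup\{B>\sqrt u\}$ for nonnegative $A,B$, the family $\{e^{-m}(e^X\mu_n)(\T)\}_n$ is tight because $e^{-m}$ is a.s.\ finite and $\{(e^X\mu_n)(\T)\}_n$ is tight; on the compact $\T$ this yields tightness of $(\mu_n)$ in $\mathcal P(\M^+)$. By Prokhorov every subsequence has a further subsequence $\mu_{n_k}$ converging in distribution to some random measure $\mu$. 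As $X$ is independent of each $\mu_{n_k}$, the pairs converge jointly, $(X,\mu_{n_k})\to(X,\mu)$ with $X\perp\mu$, and the map $(w,\sigma)\mapsto e^{w}\sigma$, continuous from $C(\T)\times\M^+$ to $\M^+$ (weakly convergent sequences in $\M^+(\T)$ having bounded mass), gives that $e^X\mu_{n_k}$ converges in distribution to $e^X\mu$; but this subsequence also converges to the limit $M$ of $(e^X\mu_n)$, so $e^X\mu\sim M$. Hence any two subsequential limits $\mu,\mu'$ of $(\mu_n)$ satisfy $e^X\mu\sim M\sim e^X\mu'$, and part (i) yields $\mu\sim\mu'$. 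All subsequential limits thus coincide in law, so $(\mu_n)$ converges in distribution.
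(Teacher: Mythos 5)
Your proof is correct. Part (ii) is the same argument as the paper's: tightness of $(\mu_n)$ inherited from that of $(e^X\mu_n)$ via a.s.\ finiteness of $\sup_\T(-X)$, Prokhorov, joint convergence $(X,\mu_{n_k})\to(X,\mu)$ with independent components, continuity of $(w,\sigma)\mapsto e^w\sigma$ on $C(\T)\times\M^+$, and then part (i) to identify all subsequential limits. In part (i) the analytic engine is also the same as the paper's --- Karhunen--Lo\`eve expansion of $X$ in $C(\T)$, injectivity of Gaussian convolution coming from the non-vanishing Gaussian Fourier transform and division of tempered distributions, and a dominated-convergence limit using a.s.\ uniform convergence of the tail --- but you organize the reduction differently. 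The paper works with characteristic functionals and first proves the rank-one case $X=Nf$ by a one-dimensional deconvolution in the scalar shift variable; the general case then follows by applying that rank-one statement repeatedly to strip the leading Karhunen--Lo\`eve terms one at a time, which yields $e^{\sum_{k\ge n}N_kf_k}\mu\sim e^{\sum_{k\ge n}N_kf_k}\nu$ for every $n$, and finally letting $n\to\infty$. You instead work with Laplace functionals, recast the hypothesis as the convolution identity $\int_{C(\T)}\Phi(h+w)\,d\gamma(w)=0$ on path space, average out the tail to form $\Phi^{(n)}$, and remove the whole head at once by $n$-dimensional Weierstrass-transform injectivity on $E_n$. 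The content is the same --- unconditioning your conclusion $\Phi^{(n)}|_{E_n}\equiv 0$ over all shifts $h_0$ recovers exactly the paper's intermediate statement that $e^{\sum_{k>n}\xi_ke_k}\mu$ and $e^{\sum_{k>n}\xi_ke_k}\nu$ agree in law --- but each formulation has its merits: yours invokes the deconvolution only once, as a clean statement about Gaussian shifts on $C(\T)$, whereas the paper's rank-one version keeps every Fourier-analytic step one-dimensional and completely elementary. Two details worth making explicit in your write-up: the symmetry of $\gamma_n$ is what turns your shift identity into an honest convolution, and the nondegeneracy of $\gamma_n$ on $E_n\cong\reals^n$ rests on the linear independence of $e_1,\dots,e_n$ in $C(\T)$, i.e.\ on the injectivity of the Cameron--Martin embedding.
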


\begin{proof}
  We will first show that if $X$ is of the simple form $Nf$ with $N$ a standard Gaussian random variable and $f \in C(\T)$, then the claim holds. To this end let us fix $g \in C(\T)$ and consider the function $\varphi \colon \reals \to \complexes$ defined by
  \[\varphi(x) = \E[\exp\big(i \int e^{Nf} e^{-xf} g \, d\mu\big)] = \E[\exp\big(i \int e^{Nf} e^{-xf} g \, d\nu\big)].\]
  Because $N$ is independent of $\mu$ and $\nu$, we may write
  \[\varphi(x) = \int_{-\infty}^\infty \E[\exp(i \int e^{(y-x)f} g \, d\mu)] \frac{1}{\sqrt{2\pi}} e^{-\frac{y^2}{2}} \, dy.\]
  By denoting $u(t) = \E[\exp(i \int e^{-tf} g \, d\mu)]$, $v(t) = \E[\exp(i \int e^{-tf} g \, d\nu)]$ and $h(x) = \frac{1}{\sqrt{2\pi}} e^{-\frac{x^2}{2}}$, we see that $\varphi(x) = (u * h)(x) = (v * h)(x)$. Because the Fourier transform of $h$ is also Gaussian we deduce by taking convolutions that the Fourier transforms $\widehat{u}$ and $\widehat{v}$ coincide as Schwartz distributions. Since $u$ and $v$ are continuous, this implies that $u(x) = v(x)$ for all $x$. In particular setting $x=0$ gives us
  \[\E[\exp(i \int g \, d\mu)] = \E[\exp(i \int g \, d\nu)],\]
  for all $g \in C(\T)$, whence the measures $\mu$ and $\nu$ have the same distribution.

  To deduce the general case, note that we have the Karhunen--Lo\`eve decomposition
  \[X = \sum_{k=1}^\infty N_k f_k\]
  where $N_k$ are standard Gaussian random variables and $f_k \in C(\T)$ for all $k \in \naturalnumbers$. Moreover the above series converges almost surely uniformly. (See for example \cite[Theorem 3.1.2.]{adler2009random}.) By the first part of the proof we know that $e^{\sum_{k=n}^\infty N_k f_k} \, \mu$ and $e^{\sum_{k=n}^\infty N_k f_k} \, \nu$ have the same distribution for all $n \in \naturalnumbers$. By the dominated convergence theorem we have
  \begin{align*}
    \E[\exp(i \int g \, d\mu)] & = \lim_{n\to\infty} \E[\exp(i \int e^{\sum_{k=n}^\infty N_k f_k} g \, d\mu)] \\
                               & = \lim_{n\to\infty} \E[\exp(i \int e^{\sum_{k=n}^\infty N_k f_k} g \, d\nu)] = \E[\exp(i \int g \, d\nu)]
  \end{align*}
  for all $g \in C(\T)$, which shows the claim.

  The second part of the lemma follows from the first part. Since $\sup_{t \in \T} X(t) < \infty$ almost surely, one checks that the sequence $(\mu_n)$ inherits the tightness of the sequence $(e^X \mu_n)$. It is therefore enough to show that any two converging subsequences have the same limit. Indeed, assume that $\mu_{k_j} \to \mu$ and $\mu_{n_j} \to \nu$ in distribution. Then by independence we have $e^X \mu_{k_j} \to e^X \mu$ and $e^X \mu_{n_j} \to e^X \nu$, but by assumption the limits are equally distributed and hence also $\mu$ and $\nu$ have the same distribution.
\end{proof}

A typical example of a linear regularization process described in the following definition is given by a standard convolution approximation sequence. We denote by $C^\alpha(\T)$ the Banach space of $\alpha$-H\"older continuous functions on $\T$.

\begin{definition}\label{def:linear_reg}
  Let $(X_k)$ be a sequence of approximating fields on $\T$.
  We say that a sequence $(R_n)$ of linear operators $R_n \colon \bigcup_{\alpha \in (0,1)} C^\alpha(\T) \to C(\T)$ is a \emph{linear regularization process} for the sequence $(X_k)$ if the following properties are satisfied:
  \begin{enumerate}
    \item We have $\lim_{n \to \infty} \|R_n f - f\|_\infty = 0$ for all $f \in \bigcup_{\alpha \in (0,1)} C^\alpha(\T)$.
    \item The limit $R_n X := \lim_{k \to \infty} R_n X_k$ exists in $C(\T)$ almost surely.
  \end{enumerate}
\end{definition}

\begin{theorem}\label{thm:convergence_in_probability}
  Assume that the increments $\{X_{m+1} - X_{m} : m \ge 1\}$ of the approximating fields $X_m$ are independent
  and that there is the convergence in probability
  \begin{equation}\label{eq:lyhyt}
  d\widetilde{\mu}_n := e^{X_n - \frac{1}{2}\E[X_n^2]} \, d\rho_n \underset{n \to \infty}{\overset{P}{\longrightarrow}} \widetilde{\mu}.
  \end{equation}
  Let $R_n$ be some linear regularization process for the sequence $X_k$ such that
  \[e^{R_n X - \frac{1}{2} \E[(R_n X)^2]} \, d\rho_n \underset{n \to \infty}{\overset{d}{\longrightarrow}} \widetilde{\mu}.\]
  Then also $d\mu_n = e^{R_n X - \frac{1}{2} \E[(R_n X)^2]} \, d\rho_{n}$ converges to $\widetilde{\mu}$ in probability.
\end{theorem}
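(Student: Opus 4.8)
The plan is to reduce the whole statement to the scalar claim that $\mu_n(f)\overset{P}{\longrightarrow}\widetilde{\mu}(f)$ for each fixed $f\in C(\T)$; convergence in probability in the metric space $\M^+$ then follows by a standard subsequence argument from this scalar convergence applied to a countable dense family of test functions, together with the tightness of $(\mu_n)$ (which is automatic, $(\mu_n)$ being convergent in distribution in the Polish space $\M^+$). To obtain the scalar convergence I would invoke Lemma~\ref{lemma:convergence_in_distribution_implies_convergence_in_probability} with the filtration $\mathcal{F}_m:=\sigma(X_1,\dots,X_m)$. Its limit $\sigma$-algebra $\mathcal{F}_\infty$ makes $\widetilde{\mu}(f)$ measurable, since $\widetilde{\mu}(f)$ is the in-probability limit of the $\mathcal{F}_n$-measurable variables $\widetilde{\mu}_n(f)$, while $\mu_n(f)$ is $\mathcal{F}_\infty$-measurable because $R_nX=\lim_kR_nX_k$. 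As $\mu_n(f)\overset{d}{\longrightarrow}\widetilde{\mu}(f)$ is already assumed, everything is reduced to verifying hypothesis \eqref{eq:ehto}: that $\chi_E\mu_n(f)\overset{d}{\longrightarrow}\chi_E\widetilde{\mu}(f)$ for every $E\in\mathcal{F}_j$ and every $j\ge1$.

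The key device is to split off the first $j$ levels. For $n\ge j$ write $X_n=X_j+W_n$ with $W_n:=X_n-X_j$ independent of $\mathcal{F}_j$ by the independence of the increments, and correspondingly $R_nX=R_nX_j+R_nW$ with $R_nW:=R_nX-R_nX_j$ again independent of $\mathcal{F}_j$. The pointwise variance splitting then gives
\[\mu_n=\Phi_n\,\nu_n,\qquad \Phi_n:=e^{R_nX_j-\frac12\E[(R_nX_j)^2]},\qquad d\nu_n:=e^{R_nW-\frac12\E[(R_nW)^2]}\,d\rho_n,\]
where $\Phi_n$ is $\mathcal{F}_j$-measurable and $\nu_n$ is independent of $\mathcal{F}_j$. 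Property~(1) of the regularization process and the almost sure Hölder continuity of $X_j$ (Lemma~\ref{lemma:holder_regular_cont}) give $\Phi_n\to\Phi:=e^{X_j-\frac12\E[X_j^2]}$ in $C(\T)$ almost surely. On the martingale side the same splitting yields $\widetilde{\mu}_n=\Phi\,\tau_n$ with $\tau_n$ independent of $\mathcal{F}_j$; since $\Phi$ is a fixed positive continuous function and $\widetilde{\mu}_n\overset{P}{\longrightarrow}\widetilde{\mu}$, we get $\tau_n\overset{P}{\longrightarrow}\tau:=\Phi^{-1}\widetilde{\mu}$, and $\tau$ is independent of $\mathcal{F}_j$ as an in-probability limit of $\mathcal{F}_j$-independent measures. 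In particular $\widetilde{\mu}=\Phi\tau$.

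The main obstacle is identifying the distributional limit of the independent factor $\nu_n$, and this is where I expect the real work. The family $(\nu_n)$ is tight (its masses are dominated by $\|\Phi_n^{-1}\|_\infty\,\mu_n(\T)$, with $(\mu_n)$ tight and $\Phi_n^{-1}\to\Phi^{-1}$ almost surely), so any subsequence admits a distributional limit $\nu^*$. Along such a subsequence $(\Phi_n,\nu_n)$ converges jointly to $(\Phi,\nu^*)$ with $\nu^*$ independent of $\Phi$ (as $\nu_n\perp\mathcal{F}_j\ni\Phi_n$), whence $\mu_n=\Phi_n\nu_n\overset{d}{\longrightarrow}\Phi\nu^*$. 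Comparing with $\mu_n\overset{d}{\longrightarrow}\widetilde{\mu}=\Phi\tau$ gives $\Phi\nu^*\sim\Phi\tau$, i.e.\ $e^{X_j}\big(e^{-\frac12\E[X_j^2]}\nu^*\big)\sim e^{X_j}\big(e^{-\frac12\E[X_j^2]}\tau\big)$ with both bracketed measures independent of the Hölder-regular field $X_j$. Now Lemma~\ref{lemma:measure_equal_in_prob}~{\bf(i)} cancels the common factor $e^{X_j}$ and yields $\nu^*\sim\tau$. As every subsequential limit thus equals $\tau$ in law, tightness upgrades this to $\nu_n\overset{d}{\longrightarrow}\tau$.

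With this in hand the conclusion is routine. The triple $(\chi_E,\Phi_n,\nu_n)$ converges in distribution to $(\chi_E,\Phi,\tau')$ where $\tau'$ is independent of the $\mathcal{F}_j$-measurable pair $(\chi_E,\Phi)$ and $\tau'\sim\tau$, because the first two coordinates converge almost surely while the third converges in law and is independent of $\mathcal{F}_j$. The map $(\varepsilon,\phi,m)\mapsto\varepsilon\int_\T f\phi\,dm$ is continuous on $\{0,1\}\times C(\T)\times\M^+$ (weak$^*$ convergence controls the total mass on the compact space $\T$), so the continuous mapping theorem gives
\[\chi_E\mu_n(f)=\chi_E\,\nu_n(f\Phi_n)\overset{d}{\longrightarrow}\chi_E\,\tau'(f\Phi)\sim\chi_E\,\tau(f\Phi)=\chi_E\,\widetilde{\mu}(f),\]
the middle identity following from $\tau'\sim\tau$ and the independence of both from $(\chi_E,\Phi)$, and the last from $\widetilde{\mu}=\Phi\tau$. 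This verifies \eqref{eq:ehto}, so Lemma~\ref{lemma:convergence_in_distribution_implies_convergence_in_probability} yields $\mu_n(f)\overset{P}{\longrightarrow}\widetilde{\mu}(f)$ for every $f$, and the reduction of the first paragraph finishes the proof.
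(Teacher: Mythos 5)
Your proof is correct and is essentially the paper's own argument: the same splitting of $\mu_n$ into an $\mathcal{F}_j$-measurable factor $\Phi_n$ and a factor $\nu_n$ independent of $\mathcal{F}_j$ (using the independent increments for the variance splitting), identification of the limit of $\nu_n$ by cancelling the common H\"older-regular Gaussian factor via Lemma~\ref{lemma:measure_equal_in_prob}~(i) (you merely inline the tightness-plus-subsequence argument that the paper delegates to part~(ii) of that lemma), and the final appeal to Lemma~\ref{lemma:convergence_in_distribution_implies_convergence_in_probability} after a Slutsky-type step that mirrors the paper's multiplication by a bounded $\mathcal{F}_k$-measurable $g$. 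The one step you under-justify is $\Phi_n \to \Phi$ in $C(\T)$ almost surely: Definition~\ref{def:linear_reg}~(1) and a.s.\ H\"older continuity give $R_n X_j \to X_j$ uniformly a.s., but the deterministic uniform convergence $\E[(R_n X_j)^2] \to \E[X_j^2]$ in $C(\T)$ does not follow from these alone (pointwise a.s.\ convergence does not control second moments, nor does pointwise convergence of continuous functions give uniformity) -- the paper supplies exactly this missing piece via the Banach--Steinhaus bound $\sup_n \|R_n\|_{C^\gamma \to C(\T)} < \infty$ together with Fernique's theorem and dominated convergence.
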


\begin{remark}
  As in Remark~\ref{rmk:uniqueness_non_compact} the above theorem extends to the case of a non-compact $\T$ when the assumptions are suitably reinterpreted. In a particular application it is also enough to assume the condition (1) in Definition~\ref{def:linear_reg} for one suitable fixed value of $\alpha > 0$, if the exponent of the H\"older regularity of the approximating fields is known.
\end{remark}

\begin{proof}
  Define the filtration $\mathcal{F}_n := \sigma(X_1, \dots, X_n)$.
  First of all, since $e^{X_n - \frac{1}{2} \E[X_n^2]} \, d\rho_n$ converges to $\widetilde{\mu}$ in probability as $n\to\infty$, we also have
  \[e^{X_n - X_k - \frac{1}{2} \E[(X_n - X_k)^2]} \, d\rho_n \underset{n \to \infty}{\overset{P}{\longrightarrow}} e^{-X_k + \frac{1}{2} \E[X_k^2]} \widetilde{\mu} \quad \text{for every } k \ge 1.\]
  To see this, one uses that $\E[(X_n - X_k)^2] = \E[X_n^2] - \E[X_k^2]$ and considers almost surely converging subsequences, if necessary.
  We denote $\eta_k := e^{-X_k + \frac{1}{2} \E[X_k^2]} \widetilde{\mu}$.

  Notice that $\E[(R_n X)(R_n X_k)] = \E[(R_n X_k)^2]$ by the independent increments and the definition of $R_n X$. We may thus write
  \begin{align}\label{eq:decomposition_at_level_k}
    d\mu_n& =
  e^{R_n X - \frac{1}{2} \E[(R_n X)^2]}\, d\rho_n \\& = \Big[ e^{R_n X_k - X_k+\frac{1}{2} \E[X_k^2-(R_n X_k)^2]}\Big]e^{X_k - \frac{1}{2} \E[ X_k^2]}e^{R_n (X - X_k) - \frac{1}{2} \E[(R_n (X - X_k))^2]} \, d\rho_n.\nonumber
  \end{align}
  Above on the right hand side the term in brackets is negligible as $n\to\infty$. To see this, we note first that  $e^{R_n X_k - X_k}$ tends almost surely to the constant function $1$ uniformly according to  Definition~\ref{def:linear_reg}(1). Moreover, $\E[X_k^2-(R_n X_k)^2]$ tends to $0$ in $C(\T)$, since the field $X_k$ takes values in a fixed $C^\gamma(\T)$ for some $\gamma > 0$, and by the Banach--Steinhaus theorem
  $\sup_{n \ge 1} \|R_n\|_{C^\gamma(T) \to C(\T)} < \infty$.
 Namely,
  \begin{align*}
    \|\E[ X_k^2-(R_n X_k)^2  ]\|_{C(\T)} & \le \E \| (X_k-R_n X_k)( X_k + R_nX_k)\|_{C(\T)} \\
                                        & \le \E \Big[\|X_k-R_n X_k\|_{C(\T)} \|X_k+R_n X_k\|_{C(\T)}\Big] \\
                                        & \lesssim \E \|X_k\|_{C^\gamma(\T)}^2,
  \end{align*}
 whence the dominated convergence theorem applies, since $\|X_k\|_{C^\gamma(\T)}$ has a super exponential tail by Fernique's theorem.
 All in all, invoking the assumption on the convergence of $\widetilde{\mu}_n$ we deduce that
  \begin{equation}\label{eq:nu_convergence}
    e^{X_k - \frac{1}{2} \E[X_k^2]} e^{R_n (X - X_k) - \frac{1}{2} \E[(R_n (X - X_k))^2]} \, d\rho_n\longrightarrow \widetilde{\mu}
  \end{equation}
  in distribution as $n\to\infty$.

  By Lemma~\ref{lemma:measure_equal_in_prob} we thus have the distributional convergence
  \[e^{R_n (X - X_k) - \frac{1}{2} \E[(R_n (X - X_k))^2]} \, d\rho_n\longrightarrow \nu_k\quad\textrm{as}\;\;n\to\infty,
  \]
  where the limit $\nu_k$ may be  assumed to be independent of $\mathcal{F}_k$.
  In particular, recalling \eqref{eq:nu_convergence} we deduce that $e^{X_k - \frac{1}{2} \E[X_k^2]} \nu_k$ has the same distribution as $\widetilde{\mu} = e^{X_k - \frac{1}{2} \E[X_k^2]} \eta_k$.
Lemma~\ref{lemma:measure_equal_in_prob} now verifies that $\nu_k\sim \eta_k$.
  In order to invoke Lemma~\ref{lemma:convergence_in_distribution_implies_convergence_in_probability}, fix  any $\mathcal{F}_k$ measurable bounded random variable $g$.
  Then $g$ and $X_k$ are independent of $X - X_k$, and we therefore  have the distributional convergence
  \begin{align}\label{eq5}
  g e^{X_k - \frac{1}{2} \E[X_k^2]} &e^{R_n (X - X_k) - \frac{1}{2} \E[(R_n (X - X_k))^2]} \, d\rho_n\\
 \underset{n\to\infty} \longrightarrow \;\;&g e^{X_k - \frac{1}{2} \E[X_k^2]} \, d\nu_k \;\; \sim \;\; g e^{X_k - \frac{1}{2} \E[X_k^2]} \, d\eta_k
 \; = \; g \, d\widetilde{\mu},\nonumber
  \end{align}
where the second last equality followed  by independence.
  Finally, again by the negligibility of the term $e^{R_n X_k - X_k}e^{-\frac{1}{2} \E[X_k^2-(R_n X_k)^2]}$ and using \eqref{eq:decomposition_at_level_k} we see that \eqref{eq5} in fact entails the convergence of $g \, d\mu_n$ to $g \, d\widetilde{\mu}$ in distribution.
  At this stage Lemma~\ref{lemma:convergence_in_distribution_implies_convergence_in_probability} applies and the desired claim follows.
\end{proof}

\begin{remark}
  In the previous theorem it was crucial that we already have an approximating sequence of fields along which the corresponding chaos converges in probability.
  In general if one only assumes convergence in distribution in $\eqref{eq:lyhyt}$, one may not automatically expect that it is possible to lift the convergence to that in probability, even for natural approximating fields. However, for most of the standard constructions of subcritical chaos this problem does not occur, as we have even almost sure convergence in \eqref{eq:lyhyt} due to the martingale convergence theorem.
\end{remark}

\section{Two auxiliary results}\label{sec:auxiliary}

\noindent In this section we provide a couple of useful auxiliary tools dealing with convolution approximations and convergence of perturbed chaos.

The next lemma and its corollaries show that any two convolution approximations (with some regularity) applied to log-normal chaos stay close to each other in the sense of Theorem~\ref{thm:uniqueness}.

\begin{lemma}\label{lemma:bmo_convolutions}
  Let $\varphi, \psi \colon \reals^d \to \reals$ satisfy $\int \varphi(x) \, dx = \int \psi(x) \, dx = 1$ and $|\varphi(x)|, |\psi(x)| \le C (1 + |x|)^{-(d + \delta)}$ for all $x \in \reals^d$ with some constants $C,\delta > 0$. Then if $u \in BMO(\reals^d)$, we have
  \[|(\varphi_\varepsilon * u)(x) - (\psi_\varepsilon * u)(x)| \le K\]
  for some constant $K > 0$ not depending on $\varepsilon$.
\end{lemma}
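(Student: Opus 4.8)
The plan is to reduce everything to a single mean-zero kernel and then exploit the defining oscillation bound for $BMO$. Set $\eta := \varphi - \psi$, so that $\int_{\reals^d}\eta(x)\,dx = 0$, the decay bound $|\eta(x)| \le 2C(1+|x|)^{-(d+\delta)}$ holds, and (using the integral-preserving scaling $\eta_\varepsilon(y) = \varepsilon^{-d}\eta(y/\varepsilon)$) one has the identity $(\varphi_\varepsilon * u)(x) - (\psi_\varepsilon * u)(x) = (\eta_\varepsilon * u)(x)$. Fixing $x \in \reals^d$ and $\varepsilon > 0$, I would write $Q := B(x,\varepsilon)$ and let $u_Q$ denote the average of $u$ over $Q$. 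Because $\eta_\varepsilon$ integrates to zero, I may insert the constant $u_Q$ for free inside the convolution:
\[(\eta_\varepsilon * u)(x) = \int_{\reals^d} \eta_\varepsilon(x-y)\,(u(y) - u_Q)\,dy.\]

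Next I would decompose $\reals^d$ into the dyadic annuli $A_k := \{y : 2^{k-1}\varepsilon \le |x-y| < 2^k\varepsilon\}$ for $k \ge 1$ together with the central ball $A_0 := Q$, and estimate the contribution of each annulus separately. On $A_k$ the decay hypothesis gives $|\eta_\varepsilon(x-y)| \lesssim \varepsilon^{-d}2^{-k(d+\delta)}$, while $A_k$ is contained in the dilated ball $2^kQ := B(x,2^k\varepsilon)$, whose measure is comparable to $2^{kd}\varepsilon^d$. The two standard $BMO$ facts I would invoke are (i) the average-oscillation bound $\frac{1}{|2^kQ|}\int_{2^kQ}|u - u_{2^kQ}| \le \|u\|_{BMO}$, and (ii) the telescoping estimate $|u_{2^kQ} - u_Q| \le C_d\,k\,\|u\|_{BMO}$, which follows because consecutive dyadic averages differ by at most $2^d\|u\|_{BMO}$. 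Combining these yields $\int_{2^kQ}|u(y) - u_Q|\,dy \lesssim 2^{kd}\varepsilon^d(k+1)\|u\|_{BMO}$, so the $A_k$-contribution is bounded by a constant multiple of $2^{-k\delta}(k+1)\|u\|_{BMO}$; the crucial cancellation of the $2^{kd}$ volume growth against the kernel decay $2^{-k(d+\delta)}$ is exactly what leaves the geometric factor $2^{-k\delta}$.

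Summing over $k \ge 0$ and using $\delta > 0$, the series $\sum_{k\ge 0}2^{-k\delta}(k+1)$ converges, giving
\[|(\eta_\varepsilon * u)(x)| \le K := C\,\|u\|_{BMO}\sum_{k=0}^\infty 2^{-k\delta}(k+1) < \infty,\]
a bound independent of both $x$ and $\varepsilon$, as required. The only genuine obstacle is the logarithmic growth of the dyadic averages in step (ii): one needs the decay exponent to strictly exceed $d$ (that is, $\delta > 0$) precisely so that the extra linear factor $k$ coming from $|u_{2^kQ} - u_Q|$, together with the volume factor $2^{kd}$, are both absorbed by the tail decay of the kernel. A minor bookkeeping point is the central ball $A_0$, where one simply uses $|\eta_\varepsilon| \le 2C\varepsilon^{-d}$ together with $\int_Q|u-u_Q| \le |Q|\,\|u\|_{BMO}$.
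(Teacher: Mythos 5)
Your proof is correct and follows essentially the same route as the paper: both exploit that $\varphi-\psi$ has integral zero to subtract a local average of $u$ inside the convolution, and then control the result by BMO oscillation bounds, with the kernel decay absorbing the growth of the averages. The only difference is that the paper, after rescaling to unit scale, cites the standard estimate $\int_{\reals^d}\big|u(y)-\fint_{B(0,1)}u\big|\,(1+|y|)^{-(d+\delta)}\,dy \le C_{d,\delta}\|u\|_{BMO}$ (Grafakos, Proposition 7.1.5) together with the translation and dilation invariance of the BMO norm, whereas you reprove exactly this estimate from scratch via the dyadic-annuli decomposition and the telescoping bound $|u_{2^kQ}-u_Q|\lesssim k\|u\|_{BMO}$.
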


\begin{proof}
  One can use the mean zero property and decay of $\varphi - \psi$ together with a standard BMO-type estimate \cite[Proposition 7.1.5.]{grafakos2009modern} to see that for any $\varepsilon > 0$ we have
  \begin{align*}
    & \left|\int_{\reals^d} (\varphi_\varepsilon - \psi_\varepsilon)(t) u(x-t) \, dt\right| = \left|\int_{\reals^d} (\varphi - \psi)(t) \Big(u(\varepsilon (x-t)) - \fint_{B(0,1)} u(\varepsilon (x-s))\,ds\Big) \, dt\right| \\
    & \le \int_{\reals^d} \frac{|u(\varepsilon (x-t)) - \fint_{B(0,1)} u(\varepsilon (x-s)) \, ds|}{(1 + |t|)^{d + \delta}} \, dt \\
    & \le  C_{d,\delta} \|u(\varepsilon (x - \cdot))\|_{BMO} = C_{d,\delta} \|u\|_{BMO}.\qedhere
  \end{align*}
\end{proof}

\begin{corollary}\label{cor:convolutions}
  Let $f(x,y) = 2d\beta^2 \log^+ \frac{1}{|x-y|} + g(x,y)$ be a covariance kernel of a distribution valued field $X$ defined on $\reals^d$. Here $g$ is a bounded uniformly continuous function. Assume that $\varphi$ and $\psi$ are two locally H\"older continuous convolution kernels in $\reals^d$ that satisfy the conditions of Lemma~\ref{lemma:bmo_convolutions}. Let $(\varepsilon_n)$ be a sequence of positive numbers $\varepsilon_n$ converging to $0$. Then the approximating fields $X_n := \varphi_{\varepsilon_n} * X$ and $\widetilde{X}_n := \psi_{\varepsilon_n} * X$ satisfy the conditions \eqref{ehto1} and \eqref{ehto2} of Theorem~\ref{thm:uniqueness}.
\end{corollary}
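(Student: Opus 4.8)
The plan is to verify the two conditions of Theorem~\ref{thm:uniqueness} separately, using the covariance structure of the mollified fields and reducing everything to statements about the convolution kernels acting on the log-correlated kernel. The covariance of $X_n = \varphi_{\varepsilon_n} * X$ is
\[
C_n(x,y) = (\varphi_{\varepsilon_n} \otimes \varphi_{\varepsilon_n}) * f \,(x,y) = \int\int \varphi_{\varepsilon_n}(s)\varphi_{\varepsilon_n}(t) f(x-s,y-t) \, ds \, dt,
\]
and similarly $\widetilde{C}_n$ with $\psi$ in place of $\varphi$. Since $f = 2d\beta^2 \log^+ \frac{1}{|x-y|} + g$ and $g$ is bounded and uniformly continuous, the contribution of $g$ to $C_n - \widetilde{C}_n$ is easy to control: both $(\varphi_{\varepsilon_n}\otimes\varphi_{\varepsilon_n})*g$ and $(\psi_{\varepsilon_n}\otimes\psi_{\varepsilon_n})*g$ converge uniformly to $g$ by the approximate-identity property combined with uniform continuity, so their difference is uniformly bounded (giving part of \eqref{ehto1}) and tends to $0$ uniformly (giving part of \eqref{ehto2} with no restriction on $d(x,y)$). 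The heart of the matter is thus the singular logarithmic part.

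For the logarithmic part I would apply Lemma~\ref{lemma:bmo_convolutions}. The key observation is that the function $x \mapsto \log^+ \frac{1}{|x|}$, and more relevantly the one-variable functions obtained by fixing $y$, lie in $BMO(\reals^d)$; indeed $\log|x|$ is the standard example of an unbounded $BMO$ function. The difference $C_n(x,y) - \widetilde{C}_n(x,y)$ restricted to the logarithmic kernel can be organized so that in at least one of the two variables one is comparing $\varphi_{\varepsilon_n} * u$ against $\psi_{\varepsilon_n} * u$ for $u$ a $BMO$ function of that variable. Lemma~\ref{lemma:bmo_convolutions} then yields a bound by a constant $K$ uniform in $\varepsilon_n$ and in the frozen variable, which establishes \eqref{ehto1} for the singular part. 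Combining with the $g$-estimate gives \eqref{ehto1} in full.

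For condition \eqref{ehto2} I would exploit that the singularity of $\log^+\frac{1}{|x-y|}$ sits only on the diagonal. Fix $\delta > 0$ and restrict to $|x-y| > \delta$. On this region the kernel $f(x-s,y-t)$ is smooth (in fact the logarithmic term is $C^\infty$) for $|s|,|t|$ small, so for $\varepsilon_n$ small both $C_n$ and $\widetilde{C}_n$ are approximating the \emph{same} value $f(x,y)$, with the error governed by how well the mollifiers reproduce a locally smooth function. Quantitatively, using $\int\varphi = \int\psi = 1$ together with the decay hypothesis $|\varphi(x)|,|\psi(x)| \le C(1+|x|)^{-(d+\delta)}$ and the local Hölder/smoothness of $\log^+\frac{1}{|\cdot|}$ away from the origin, one shows $\sup_{|x-y|>\delta}|C_n(x,y) - \widetilde{C}_n(x,y)| \to 0$. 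The mild subtlety, which is the main obstacle, is that the mollifiers are not compactly supported and some mass of $\varphi_{\varepsilon_n}(s)$ can reach across to the singularity at $s = x - y$; this is why one splits $\reals^d$ into a neighborhood of the origin, where local smoothness gives a vanishing contribution, and the tail $|s| \gtrsim \delta$, where the polynomial decay $(1+|s/\varepsilon_n|)^{-(d+\delta)}$ of $\varphi_{\varepsilon_n}$ makes the mass negligible as $\varepsilon_n \to 0$. Handling this tail carefully — and doing the analogous splitting simultaneously in both variables — is the only genuinely technical point; everything else reduces to the approximate-identity estimates and the single $BMO$ bound of Lemma~\ref{lemma:bmo_convolutions}.
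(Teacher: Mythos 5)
Your proposal is correct and rests on the same two pillars as the paper's proof: the $g$-part is handled by the approximate-identity property for bounded uniformly continuous functions, and the singular part by Lemma~\ref{lemma:bmo_convolutions} applied to $\ell(x):=2d\beta^2\log^+\frac{1}{|x|}$, which lies in $BMO(\reals^d)$. The difference is in how the two-variable structure is reduced to the one-variable lemma. The paper exploits that $\ell(t-s)$ depends only on $t-s$: the logarithmic part of the covariance of $\varphi_\varepsilon * X$ collapses to the single convolution $(\varphi_\varepsilon*\varphi_\varepsilon(-\cdot)*\ell)(x-y)$, and the lemma is applied once, to the composite kernels $\varphi*\varphi(-\cdot)$ and $\psi*\psi(-\cdot)$ (after checking that these inherit the normalization and decay hypotheses and that scaling commutes with the composition). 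Your freezing-one-variable idea is instead realized by the telescoping identity
\[
\varphi_{\varepsilon_n}\otimes\varphi_{\varepsilon_n}-\psi_{\varepsilon_n}\otimes\psi_{\varepsilon_n}
=(\varphi_{\varepsilon_n}-\psi_{\varepsilon_n})\otimes\varphi_{\varepsilon_n}
+\psi_{\varepsilon_n}\otimes(\varphi_{\varepsilon_n}-\psi_{\varepsilon_n}),
\]
and it does work: with one variable frozen, the slice $t\mapsto\ell(t-s)$ is a translate of $\ell$, hence has the same $BMO$ norm, so the lemma gives a bound uniform in the frozen variable and in $n$, and integrating out the remaining variable costs only a factor $\|\varphi\|_{L^1}$ or $\|\psi\|_{L^1}$, finite by the assumed decay. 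What each approach buys: the paper's collapse is shorter and makes condition \eqref{ehto2} genuinely ``immediate'', while your telescoping never uses translation invariance of the singular kernel, only uniform $BMO$ bounds on its one-variable slices, so it is marginally more general. One refinement to your tail estimate for \eqref{ehto2}: on the region $|s|\gtrsim\delta$ it is not quite enough that the mass $\int_{|s|>\delta/4}|\varphi_{\varepsilon_n}|$ tends to $0$, because the inner integral of the scaled kernel against $\ell$ is not uniformly bounded near the singularity (it grows like $\log(1/\varepsilon_n)$); instead bound $|\varphi_{\varepsilon_n}|$ on that region in sup-norm by a constant times $\varepsilon_n^{\delta_0}$, where $\delta_0$ is the decay exponent of the kernel, and use that $\ell\in L^1(\reals^d)$ (compact support and an integrable singularity), which makes the tail contribution $O(\varepsilon_n^{\delta_0})$.
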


\begin{proof}
  The function $\ell(x) := 2d\beta^2 \log^+ \frac{1}{|x|}$ belongs to $BMO(\reals^d)$ since $\log |x| \in BMO(\reals^d)$, see for example \cite[Example 7.1.3]{grafakos2009modern}. One computes that the covariance of $\varphi_{\varepsilon} * X$ equals
  \[\int \int \varphi_{\varepsilon}(x-t) \varphi_{\varepsilon}(y-s) \ell(t-s) \, dt \, ds + \int \int \varphi_{\varepsilon}(x-t) \varphi_{\varepsilon}(y-s) g(t,s) \, dt \, ds.\]
  Because $g$ is bounded and uniformly continuous the second term goes to $g(x,y)$ uniformly, so we may without loss of generality assume that $g(x,y) = 0$. The first term equals $(\varphi_{\varepsilon} * \varphi_{\varepsilon}(- \cdot) * \ell)(x-y)$, so the condition \eqref{ehto1} follows from Lemma~\ref{lemma:bmo_convolutions} applied to the convolution kernels $\varphi * \varphi(-\cdot)$ and $\psi * \psi(-\cdot)$. Here one easily checks that also $\varphi * \varphi(-\cdot)$ satisfies the conditions of Lemma~\ref{lemma:bmo_convolutions} and that $(\varphi * \varphi(-\cdot))_\varepsilon = \varphi_\varepsilon * \varphi_\varepsilon(-\cdot)$. Finally, the condition \eqref{ehto2} is immediate.
\end{proof}

\begin{remark}
  One may easily state localized versions of the above corollary.
\end{remark}

\begin{corollary}\label{cor:convolutions_circle}
  Assume that $f(x,y) = 2\beta^2 \log^+ \frac{1}{2|\sin(\pi (x-y))|} + g(x,y)$ is the covariance of a {\rm (}distribution valued{\rm )} field $X$ on the unit circle. Here $g$ is a bounded continuous function that is $1$-periodic in both variables $x$ and $y$ and we have identified the unit circle with $\reals/\integers$. Assume that $\varphi$ and $\psi$ are two locally H\"older continuous convolution kernels in $\reals$ that satisfy the conditions of Lemma~\ref{lemma:bmo_convolutions}, and let $(\varepsilon_n)$ be a sequence of positive numbers $\varepsilon_n$ converging to $0$. Then the approximating fields $X_n := \varphi_{\varepsilon_n} * X$ and $\widetilde{X}_n := \psi_{\varepsilon_n} * X$ satisfy the conditions \eqref{ehto1} and \eqref{ehto2} of Theorem~\ref{thm:uniqueness}.
\end{corollary}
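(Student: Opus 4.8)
The plan is to follow the proof of Corollary~\ref{cor:convolutions} almost verbatim, the only genuinely new ingredient being a verification that the periodic logarithmic kernel lies in $BMO(\reals)$. Writing $\ell_{per}(u) := 2\beta^2 \log^+ \frac{1}{2|\sin(\pi u)|}$ for the $1$-periodic singular part of $f$, I would first record that the covariance of $\varphi_\varepsilon * X$ (where $X$ is extended $1$-periodically and the convolution is taken on $\reals$) equals
\[
(\Phi_\varepsilon * \ell_{per})(x - y) + \int_\reals \int_\reals \varphi_\varepsilon(s) \varphi_\varepsilon(t) g(x - s, y - t) \, ds \, dt,
\]
where $\Phi := \varphi * \varphi(-\cdot)$, so that $\Phi_\varepsilon = \varphi_\varepsilon * \varphi_\varepsilon(-\cdot)$, and analogously with $\psi$ and $\Psi := \psi * \psi(-\cdot)$. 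Since $g$ is bounded, continuous and $1$-periodic in both variables, the double-convolution terms converge to $g(x,y)$ uniformly as $\varepsilon \to 0$, so exactly as in Corollary~\ref{cor:convolutions} one may assume $g \equiv 0$ and work only with the singular parts.

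The heart of the matter is to show $\ell_{per} \in BMO(\reals)$. Using $2|\sin(\pi u)| = |1 - e^{2\pi i u}|$ and $\log^+\frac{1}{b} = \max(-\log b, 0)$, we have $\ell_{per}(u) = 2\beta^2 \max(-w(u) - \log 2,\, 0)$ with $w(u) := \log|\sin(\pi u)|$; since additive constants and the positive-part operation preserve $BMO$ (recall $\max(v,0) = \tfrac12(v + |v|)$ and $\| |v| \|_{BMO} \lesssim \|v\|_{BMO}$), it suffices to prove $w \in BMO(\reals)$. For this I would split according to the length of the averaging interval $I$. For $|I| \le 1$ the estimate is local: near each integer $n$ one has $w(u) = \log|u - n| + b(u)$ with $b$ smooth and bounded, and since $\log|\cdot| \in BMO(\reals)$ the mean oscillation over such $I$ is uniformly bounded. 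For $|I| \ge 1$ one exploits periodicity via $\frac{1}{|I|}\int_I |w - w_I| \, du \le \frac{2}{|I|} \int_I |w| \, du$, whose right-hand side is controlled by a fixed multiple of $\int_0^1 |w| \, du < \infty$. Combining the two regimes gives $\|w\|_{BMO} < \infty$, hence $\ell_{per} \in BMO(\reals)$.

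With $\ell_{per} \in BMO(\reals)$ in hand, condition \eqref{ehto1} follows as before: $\Phi$ and $\Psi$ both integrate to $1$ and inherit the decay $|\Phi(x)|, |\Psi(x)| \lesssim (1 + |x|)^{-(1 + \delta')}$ from $\varphi, \psi$ (a routine convolution estimate), so they satisfy the hypotheses of Lemma~\ref{lemma:bmo_convolutions}, which yields
\[
|C_n(x,y) - \widetilde{C}_n(x,y)| = |((\Phi_{\varepsilon_n} - \Psi_{\varepsilon_n}) * \ell_{per})(x - y)| \le K
\]
uniformly in $n$ and $x,y$. Finally, \eqref{ehto2} is immediate: for $d(x,y) > \delta$ the difference $x - y$ stays bounded away from $\integers$, where $\ell_{per}$ is continuous, so both $\Phi_{\varepsilon_n} * \ell_{per}$ and $\Psi_{\varepsilon_n} * \ell_{per}$ converge uniformly to $\ell_{per}$ there and their difference tends to $0$. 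I expect the only real obstacle to be the large-interval part of the $BMO$ bound for $w$, i.e.\ checking that periodically repeating a locally $BMO$ logarithmic singularity does not destroy membership in $BMO(\reals)$; everything else is a transcription of the Euclidean argument of Corollary~\ref{cor:convolutions}.
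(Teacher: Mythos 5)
Your proof is correct and takes essentially the same route as the paper: the paper's own proof consists precisely of the observation that $\ell(x) = 2\beta^2 \log^+ \frac{1}{2|\sin(\pi x)|}$ lies in $BMO(\reals)$ (left to the reader as ``one easily checks'') together with the remark that the rest is analogous to Corollary~\ref{cor:convolutions}, which is exactly your argument, with the $BMO$ verification filled in. One cosmetic adjustment: in your local/global splitting, the decomposition $w(u) = \log|u-n| + b(u)$ with $b$ bounded is only valid on, say, $[n-\tfrac34, n+\tfrac34]$ (since $b \to -\infty$ at $u = n \pm 1$), so the local case should be taken as $|I| \le \tfrac12$ and the periodicity argument used for $|I| \ge \tfrac12$ --- a trivial change of thresholds that does not affect the proof.
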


\begin{remark}
  Above when defining the approximating fields $X_n$ we assume that $X$ stands for the corresponding periodized field on $\reals$ and the fields $X_n$ will then automatically be periodic so that they also define fields on the unit circle.
\end{remark}

\begin{proof}
  One easily checks that $\ell(x) = 2\beta^2 \log^+ \frac{1}{2|\sin(\pi x)|}$ is in $BMO(\reals)$.
  The rest of the proof is analogous to the one of the previous corollary.
\end{proof}

The Proposition~\ref{prop:adding_regular_field} is needed later on in a localization procedure that is used to carry results from the real line to the unit circle. For its proof we need the following lemma.

\begin{lemma}\label{lemma:random_function_lp_convergence}
  Assume that $\mu_n$ is a sequence of random measures that converges to $\mu$ weakly in $L^p(\Omega)$.
  Let $F \colon \Omega \to C(\T)$ be a function valued random variable and assume that there exists $q > 0$ such that
  \[\E\left|\sup_{x \in \T} F(x)\right|^\alpha < \infty\]
  for some $\alpha > \frac{pq}{p-q}$.
  Then $\int F(x) \, d\mu_n(x)$ tends to $\int F(x) \, d\mu(x)$ in $L^q(\Omega)$.
\end{lemma}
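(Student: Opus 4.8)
The plan is to reduce to the case of elementary (simple) random functions and then control the tail uniformly in $n$ by Hölder's inequality. Set $r := \frac{pq}{p-q}$, which is finite and positive precisely because the hypothesis forces $q < p$; it is the exponent conjugate to $p$ relative to $q$, in the sense that $\frac1r + \frac1p = \frac1q$, and the moment assumption then reads $\alpha > r$. Testing the weak $L^p$ convergence against the constant function $1 \in C(\T)$ shows that $\mu_n(\T) \to \mu(\T)$ in $L^p(\Omega)$; in particular $M := \sup_n \|\mu_n(\T)\|_{L^p} < \infty$ and $\mu(\T) \in L^p(\Omega)$.

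Since $\T$ is compact metric, $C(\T)$ is separable and $F$, being Borel measurable into $C(\T)$, is strongly (Bochner) measurable. I would therefore approximate $F$ by a sequence of elementary random functions $F^{(m)} = \sum_{j} \chi_{E_j} g_j$ (finite sums, with $g_j \in C(\T)$ deterministic and $E_j \in \mathcal{F}$) chosen so that $\|F^{(m)} - F\|_{C(\T)} \to 0$ almost surely, together with the domination $\|F^{(m)}\|_{C(\T)} \le 2\|F\|_{C(\T)}$; such approximants come from a standard nearest-point construction against a countable dense subset of $C(\T)$. Because $r < \alpha$ and $\Omega$ is a probability space, the dominating variable $3\sup_x |F(x)|$ lies in $L^r(\Omega)$, so dominated convergence gives $\bigl\| \sup_x |F(x) - F^{(m)}(x)| \bigr\|_{L^r} \to 0$ as $m \to \infty$.

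For each fixed $m$ the elementary structure makes the inner limit immediate: $\int F^{(m)}\,d\mu_n = \sum_j \chi_{E_j} \int g_j\,d\mu_n$, and multiplying the $L^p$-convergent variables $\int g_j\,d\mu_n \to \int g_j\,d\mu$ by the bounded indicators $\chi_{E_j}$ preserves $L^p$-convergence; being a finite sum, this yields $\int F^{(m)}\,d\mu_n \to \int F^{(m)}\,d\mu$ in $L^p(\Omega) \subset L^q(\Omega)$ as $n \to \infty$. The tail is handled uniformly in $n$: for any random measure $\nu$ with $\nu(\T) \in L^p$, the pointwise bound $|\int (F - F^{(m)})\,d\nu| \le \sup_x|F(x)-F^{(m)}(x)|\,\nu(\T)$ combined with Hölder's inequality at the exponents $r$ and $p$ gives
\[
\Bigl\| \int (F - F^{(m)})\,d\nu \Bigr\|_{L^q} \le \bigl\| \sup_x |F(x)-F^{(m)}(x)| \bigr\|_{L^r}\, \|\nu(\T)\|_{L^p}.
\]
Applying this with $\nu = \mu_n$ (where $\|\mu_n(\T)\|_{L^p} \le M$ uniformly in $n$) and with $\nu = \mu$, and then combining the three contributions by the triangle inequality in an $\varepsilon/3$ fashion, finishes the argument: one first fixes $m$ large to make both tail terms small uniformly in $n$, and afterwards lets $n \to \infty$.

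The main obstacle is that $F$ is genuinely random and may be correlated with the measures $\mu_n$ and $\mu$, so one cannot condition on $F$ and apply the weak $L^p$ convergence directly. The resolution is exactly the uniform-in-$n$ tail estimate above, and the role of the strict inequality $\alpha > r$ is to supply an $L^r$-integrable dominating function, so that dominated convergence forces $\sup_x|F - F^{(m)}|$ to vanish in $L^r$; this is the one place where the full strength of the moment hypothesis is used.
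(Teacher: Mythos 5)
Your proof is correct, but it follows a genuinely different route from the paper's. The paper argues softly by subsequences: weak $L^p$ convergence implies convergence in probability with respect to the Kantorovich--Rubinstein metric on $\M^+$, so every subsequence of $(\mu_n)$ has a further subsequence converging almost surely, along which $\int F\,d\mu_{n_j}\to\int F\,d\mu$ almost surely (each realization of $F$ being a fixed element of $C(\T)$); H\"older's inequality with the exponent pair $(\alpha,p)$ then gives a uniform bound on $\E\bigl|\int F\,d\mu_{n_j}\bigr|^{q+\varepsilon}$ for some $\varepsilon>0$, and uniform integrability upgrades the almost sure convergence to $L^q$ convergence. You instead decouple the randomness of $F$ from that of $\mu_n$ via a Bochner simple-function approximation $F^{(m)}=\sum_j\chi_{E_j}g_j$ and prove a quantitative, uniform-in-$n$ tail bound with the exact conjugate exponent $r=pq/(p-q)$, $1/r+1/p=1/q$; the hypothesis $\alpha>r$ enters for you through dominated convergence in $L^r$, whereas in the paper it enters as the existence of a $(q+\varepsilon)$-th moment. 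Your route is more elementary and makes transparent why $pq/(p-q)$ is the correct threshold, and it avoids both subsequence extraction and the paper's one-line claim that weak $L^p$ convergence yields convergence in probability in the metric of $\M^+$ (a point the paper does not elaborate); the paper's route is shorter given the metric structure of $\M^+$ already set up in Section 2. Three small repairs would make your write-up airtight: (i) since the lemma is applied in the paper with $0<q<p<1$, the $L^p$ and $L^q$ triangle inequalities must be replaced by their power versions $\E|X+Y|^q\le\E|X|^q+\E|Y|^q$, which changes nothing essential, and the generalized H\"older inequality you invoke is indeed valid for all positive exponents; (ii) in the nearest-point construction you should include $0$ among the candidate functions so that $\|F^{(m)}-F\|_{C(\T)}\le\|F\|_{C(\T)}$, giving the claimed domination $\|F^{(m)}\|_{C(\T)}\le 2\|F\|_{C(\T)}$; (iii) like the paper's own proof, you read the hypothesis $\E|\sup_x F(x)|^\alpha<\infty$ as a bound on $\sup_x|F(x)|=\|F\|_{C(\T)}$, which is the intended meaning (and what holds in the application to Proposition~\ref{prop:adding_regular_field}), so this is a shared reading rather than a gap.
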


\begin{proof}
  It is again enough to show that any subsequence possesses a converging subsequence with the right limit. To simplify notation let us denote by $\mu_n$ an arbitrary subsequence of the original sequence.

  Directly from the definition of the metric in the space $\M^+$ we see that $\mu_n \to \mu$ in probability, meaning that we can pick a subsequence $\mu_{n_j}$ that converges almost surely. Then the almost sure convergence holds also for the sequence $\int F(x) \, d\mu_{n_j}(x)$.
   Finally, for any allowed value of $q$ a standard application of H\"older's inequality shows that $\E |\int F(x) \, d\mu_{n_j}(x)|^{q+\varepsilon}$ is uniformly bounded for some $\varepsilon > 0$. This yields uniform integrability and we may conclude.
\end{proof}

\begin{proposition}\label{prop:adding_regular_field}
  Let $(X_n)$ and $(Z_n)$ be two sequences of {\rm (}jointly Gaussian{\rm )} H\"older\-regular Gaussian fields on $\T$.
  Assume that the pseudometrics arising in Definition~\ref{def:holder_regular} can be chosen to have the same H\"older exponent and constant for all the fields $Z_n$. Assume further that there exists a H\"older-regular Gaussian field $Z$ such that $Z_n$ converges to $Z$ uniformly almost surely and that $\E[X_n(x) Z_n(x)]$ converges uniformly to some bounded continuous function $x \mapsto \E[X(x) Z(x)]$.
  Then if the measures
  \[d\mu_n(x) := e^{X_n(x) - \frac{1}{2}\E[X_n(x)^2]} \, d\rho_n(x)\]
  converge weakly in $L^p(\Omega)$ to a measure $\mu$, also the measures
  \begin{align*}
  d\nu_n(x) & := e^{(X_n(x) + Z_n(x)) - \frac{1}{2}\E[(X_n(x) + Z_n(x))^2]} \, d\rho_n(x) \\
            & = e^{Z_n(x) - \frac{1}{2}\E[Z_n(x)^2] - \E[X_n(x) Z_n(x)]} \, d\mu_n(x)
  \end{align*}
  converge weakly in $L^q(\Omega)$ for all $q < p$ to the measure
  \[d\nu(x) := e^{Z(x) - \frac{1}{2}\E[Z(x)^2] - \E[X(x) Z(x)]} \, d\mu(x).\]
\end{proposition}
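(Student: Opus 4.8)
The plan is to reduce everything to the elementary pointwise identity already recorded in the statement, namely $d\nu_n = F_n\,d\mu_n$ with the random density
$F_n(x) := e^{Z_n(x) - \frac12\E[Z_n(x)^2] - \E[X_n(x)Z_n(x)]}$, and correspondingly $d\nu = F\,d\mu$ with
$F(x) := e^{Z(x) - \frac12\E[Z(x)^2] - \E[X(x)Z(x)]}$. Weak $L^q$ convergence of $\nu_n$ then amounts to showing, for each $f\in C(\T)$, that $\int f F_n\,d\mu_n \to \int f F\,d\mu$ in $L^q(\Omega)$ for every $q<p$. Note that one must \emph{not} assume any independence between the densities and the measures, since $X_n$ and $Z_n$ are jointly Gaussian; fortunately Lemma~\ref{lemma:random_function_lp_convergence} does not require it.

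First I would set up the uniform moment and convergence estimates for the densities. Since all the fields $Z_n$ satisfy the H\"older hypothesis with a \emph{common} exponent and constant, a uniform application of Fernique's theorem shows that $\E[(\sup_x F_n(x))^s]$ and $\E[(\sup_x F(x))^s]$ are finite and bounded uniformly in $n$ for every $s>0$; here one also uses that the deterministic exponents $\E[Z_n(x)^2]$ and $\E[X_n(x)Z_n(x)]$ stay uniformly bounded. Next, from the almost sure uniform convergence $Z_n\to Z$ together with the Cauchy--Schwarz estimate $|\E[Z_n(x)^2]-\E[Z(x)^2]|\le \sqrt{\E\|Z_n-Z\|_\infty^2}\,\sqrt{\E\|Z_n+Z\|_\infty^2}$ (the second factor uniformly bounded, the first tending to $0$ by the uniform Fernique bound) one obtains $\E[Z_n(\cdot)^2]\to\E[Z(\cdot)^2]$ uniformly; the cross term converges uniformly by hypothesis. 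Combining these gives $\sup_x|F_n(x)-F(x)|\to 0$ almost surely, with uniformly bounded higher moments.

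With these preparations in hand the argument splits as
\[\int f F_n\,d\mu_n - \int f F\,d\mu = \Bigl(\int f F\,d\mu_n - \int fF\,d\mu\Bigr) + \int f(F_n-F)\,d\mu_n.\]
For the first bracket I apply Lemma~\ref{lemma:random_function_lp_convergence} to the fixed $C(\T)$-valued random variable $x\mapsto f(x)F(x)$, whose supremum has all moments by the estimate above, so that any admissible exponent $\alpha>\tfrac{pq}{p-q}$ yields convergence to $0$ in $L^q$. For the second term I bound $|\int f(F_n-F)\,d\mu_n|\le \|f\|_\infty\,\sup_x|F_n-F|\,\mu_n(\T)$ and apply H\"older's inequality with a splitting $qr<p$, $r>1$: the factor $\E[\mu_n(\T)^{qr}]$ stays bounded because $\mu_n(\T)\to\mu(\T)$ in $L^p$, while $\E[(\sup_x|F_n-F|)^{qr'}]\to 0$ by the almost sure convergence and the uniform integrability established above.

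The main obstacle is the second paragraph: producing genuinely uniform (in $n$) exponential moment control on the Gaussian densities $F_n$ and the uniform convergence of the deterministic exponent $\E[Z_n(\cdot)^2]$. This is exactly where the hypotheses of a common H\"older exponent and constant for all the $Z_n$, and of uniform convergence of $\E[X_nZ_n]$, enter; without them neither the uniform integrability needed for the $\int f(F_n-F)\,d\mu_n$ term nor the moment bound feeding Lemma~\ref{lemma:random_function_lp_convergence} would be available. The remaining steps are then routine.
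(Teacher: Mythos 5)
Your proof is correct and follows essentially the same route as the paper: both reduce the main term $\int fF\,d\mu_n \to \int fF\,d\mu$ to Lemma~\ref{lemma:random_function_lp_convergence} applied to the random continuous density $fF$, with all moment control coming from the uniform Borell--TIS/Fernique exponential bounds that the common H\"older data for the $Z_n$ provide. The only (cosmetic) difference is in the remainder $\int f(F_n-F)\,d\mu_n$, which you bound additively by $\|f\|_\infty \sup_x|F_n-F|\,\mu_n(\T)$ plus H\"older, whereas the paper sandwiches $\nu_n(f)$ multiplicatively between $e^{\pm 3\varepsilon}\int fF\,d\mu_n$ on a good event $A_n^\varepsilon$ of probability tending to one --- in both cases the same almost sure uniform convergence plus uniform integrability argument.
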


\begin{proof}
  By a standard application of the Borell--TIS inequality \cite[Theorem 4.1.2]{adler2009random} we have the following uniform bound
\begin{equation}\label{eq:Z_bound}
\E e^{r \sup_{x \in \T} Z_n(x)} \le C_r
\end{equation}
  for all $r > 0$.
  Fix $\varepsilon > 0$ and for all $n \ge 1$ define
  \[A_n^{\varepsilon} := \{\omega \in \Omega : \sup_{x \in \T} |Z_k(x) - Z(x)| < \varepsilon \text{ for all } k \ge n\}.\]
  By the assumption on uniform convergence we have $\P[A_n^{\varepsilon}] \to 1$ as $n \to \infty$.

  Fix $f \in C(\T)$, which we may assume to be non-negative, and let $0 < q < p$.
  We first show that
\[\E[\chi_{\Omega \setminus A_n^{\varepsilon}} |\nu_n(f) - \nu(f)|^q] \to 0\]
as $n \to \infty$.
It is enough to verify uniform integrability by checking that
\begin{equation}\label{eq:nu_n_q_unif}
\sup_{n \ge 1} \E|\nu_n(f)|^{p'} + \E|\nu(f)|^{p'} < \infty
\end{equation}
for some $q < p' < p$. This in turn follows easily from the assumed uniform $L^p$ bound for $\mu_n$ by using H\"older's inequality together with \eqref{eq:Z_bound}.

To handle the remaining term $\E[\chi_{A_n^{\varepsilon}} |\nu_n(f) - \nu(f)|^q]$ we use the defining property of the set $A_n^{\varepsilon}$, i.e.\ $|Z_n(x) - Z(x)| < \varepsilon$ for all $x \in \T$.
By choosing $n$ large enough and by using \eqref{eq:Z_bound} we may further assume that $\sup_{x \in \T} |\E[Z_n(x)^2] - \E[Z(x)^2]| < \varepsilon$ and $\sup_{x \in \T} |\E[Z_n(x) X_n(x)] - \E[Z(x) X(x)]| < \varepsilon$.
  It follows that when $\omega \in A_n^\varepsilon$, we have
  \[e^{-3\varepsilon} c_n(f) \le \nu_n(f) \le e^{3\varepsilon} c_n(f),\]
  where
  \[c_n(f) = \int f(x) e^{Z(x) - \frac{1}{2} \E[Z(x)^2] - \E[Z(x) X(x)]} \, d\mu_n(x).\]
  By combining this with the bound \eqref{eq:nu_n_q_unif} we see that $\E |\nu_n(f) - c_n(f)|^q \to 0$ as $\varepsilon \to 0$, uniformly in $n$.
  Finally, by Lemma~\ref{lemma:random_function_lp_convergence} we have $c_n(f) \to \nu(f)$ in $L^q(\Omega)$. This finishes the proof.
\end{proof}

\section{An application (Proof of Theorem~\ref{co:1})}\label{sec:application}

\noindent The main purpose of this chapter is to prove Theorem~\ref{co:1} and explain carefully the approximations mentioned there. For the reader's convenience we try to be fairly detailed, although some parts of the material are certainly well-known to the experts.

We start by defining the approximation $X_{2,n}$ of the restriction of the free field on the unit circle $S^1 := \{(x_1,x_2) \in \reals^2 : x_1^2 + x_2^2 = 1\}$, or if needed $S^1 := \{z \in \complexes : |z| = 1\}$ as we freely identify $\reals^2$ with $\complexes$. Following \cite{astala2011random} recall that the trace of the Gaussian free field on the unit circle is defined to be the Gaussian field\footnote{Observe that we have in fact multiplied the standard definition by $\sqrt{2}$ to get the critical field. Also the innocent constant term $2 \sqrt{\log 2} G$ is often omitted in the definition.}
\begin{equation}\label{eq:trace_of_gff}
  X(x) = 2 \sqrt{\log 2}G + \sqrt{2}\sum_{k=1}^\infty \Big(\frac{A_k}{\sqrt{k}} \cos(2\pi k x) + \frac{B_k}{\sqrt{k}} \sin(2\pi k x)\Big),
\end{equation}
where $A_k, B_k$ and $G$ are independent standard Gaussian random variables.
The field $X$ is distribution valued and its covariance (more exactly, the kernel of the covariance operator) can be calculated to be
\begin{equation}\label{eq:covariance_gff}
\E[X(x)X(y)] = 4 \log(2) + 2 \log \frac{1}{2|\sin(\pi (x - y))|}.
\end{equation}
A natural approximation of $X$ is then obtained by considering the partial sum of the Fourier series
\[X_{2,n}(x) := 2 \sqrt{\log 2} G + \sqrt{2} \sum_{k=1}^n \Big(\frac{A_k}{\sqrt{k}} \cos(2\pi k x) + \frac{B_k}{\sqrt{k}} \sin(2\pi k x)\Big).\]

Another way to get hold of this covariance is via the periodic upper half-plane white noise expansion that we define next -- recall that the non-periodic hyperbolic white noise $\W$ and the hyperbolic area measure $\mhyp$ were already defined in the introduction.
We define the periodic white noise $\Wper$ to be
\[\Wper(A) = \W(A \bmod 1),\]
where $A \bmod 1 = \{(x \bmod 1,y) : (x,y) \in A\}$ and we define $x \bmod 1$ to be the number $x' \in [-\frac{1}{2},\frac{1}{2})$ such that $x - x'$ is an integer.
Now consider cones of the form
\[H(x) := \{(x',y') : |x' - x| < \frac{1}{2}, y > \frac{2}{\pi} \tan |\pi |x' - x||\}.\]
It was noted in \cite{astala2011random} that the field $x \mapsto \sqrt{2}\Wper(H(x))$ has formally the right covariance \eqref{eq:covariance_gff}, whence a natural sequence of approximation fields $(X_{1,n})$ is obtained by cutting the white noise at the level $1/n$. More precisely we define the truncated cones
\begin{equation}\label{eq:Ht}
H_{t}(x) := H(x) \cap \{(x,y) \in \reals^2 : y > e^{-t}\}
\end{equation}
and define the regular field $X_{1,n}$ by the formula
\begin{equation}\label{eq:X1}
X_{1,n}(x) := \sqrt{2} \Wper(H_{\log n}(x)).
\end{equation}

The third approximation fields $X_{3,n}$ are defined by using a H\"older continuous function $\varphi \in L^1(\reals)$ that satisfies $\int \varphi = 1$ and possesses the decay
\[|\varphi(x)| \le \frac{C}{(1 + |x|)^{1 + \delta}}\]
for some $C,\delta > 0$. We then set $X_{3,n} := \varphi_{1/n} * X_{per}$, where $X_{per}(x) = X(e^{2\pi i x})$ is the periodization of $X$ on $\reals$.
This form of convolution is fairly general, and encompasses convolutions against functions $\widetilde{\varphi}$ defined on the circle whose support do not contain the point $(-1,0)$.
\begin{example}
  Let $u$ be the harmonic extension of $X$ in the unit disc and consider the approximating fields $X_n(x) = u(r_n x)$ for $x \in S^1$ and for an increasing sequence of radii $r_n$ tending to $1$.
  Then $X_n(x)$ is obtained from $X$ by taking a convolution against the Poisson kernel $\varphi_{\varepsilon_n}$ on the real axis, where $\varphi(x) = \frac{2}{\pi(1 + 4\pi^2x^2)}$ and $\varepsilon_n = \log \frac{1}{r_n}$. This kind of approximations might be useful for example in studying fields that have been considered in \cite{miller2013quantum}.
\end{example}

The fourth approximation fields $X_{4,n}$ are defined by using a wavelet $\psi \colon \reals \to \reals$.
We assume that $\psi$ is obtained from a multiresolutional analysis, see \cite[Definition 2.2]{wojtaszczyk1997mathematical}, and that it has the decay
\begin{equation}\label{eq:wavelet_decay}
|\psi(x)| \le C (1 + |x|)^{-\alpha}
\end{equation}
with some constants $C > 0$ and $\alpha > 2$. We further assume that $\psi$ is of bounded variation, so that the distributional derivative $\psi'$ is a finite measure that satisfies the following condition on the tail
\begin{equation}\label{eq:wavelet_derivative_decay}
  \int_{-\infty}^\infty (1 + |x|) d|\psi'|(x) < \infty.
\end{equation}
\begin{remark}
  The conditions \eqref{eq:wavelet_decay} and \eqref{eq:wavelet_derivative_decay} are fairly general, especially the standard Haar wavelets satisfy them.
\end{remark}
With the above definitions it follows from \cite[Proposition 2.21]{wojtaszczyk1997mathematical} that the periodized wavelets
\[\psi_{j,k}(x) := 2^{j/2} \sum_{l=-\infty}^\infty \psi(2^j(x-l)-k)\]
together with the constant function $1$ form a basis for the space $L^2([0,1])$.

We next consider vaguelets that can be thought of as half-integrals of wavelets. Our presentation will be rather succinct -- another more detailed account can be found in the article by Tecu \cite{tecu2012random}. The vaguelet $\nu \colon \reals \to \reals$ is constructed by setting
\[\nu(x) := \frac{1}{\sqrt{2\pi}} \int_{-\infty}^\infty \frac{\psi(t)}{\sqrt{|x-t|}} \, dt.\]
An easy computation utilizing the decay of $\psi$ verifies that $\nu \colon \reals \to \reals$ satisfies 
\begin{equation}\label{eq:vaguelet_decay}
  |\nu(x)| \le \frac{C}{(1 + |x|)^{1 + \delta}}
\end{equation}
for some $C, \delta > 0$. We may then define the periodized functions
\begin{equation}\label{eq:periodized_vaguelet}
  \nu_{j,k}(x) := \sum_{l \in \integers} \nu(2^j(x - l) - k)
\end{equation}
for all $j \ge 0$ and $0 \le k \le 2^j - 1$.
It is straightforward to check that the Fourier coefficients of $\nu_{j,k}$ satisfy
\[\widehat{\nu}_{j,k}(n) = \frac{\widehat{\psi}_{j,k}(n)}{\sqrt{|2\pi n|}} \quad \text{when } n \neq 0,\]
and that $\nu_{j,k}$ is the half-integral of $\psi_{j,k}$ in the above sense.

The field $X_{4,n}$ can now be defined by
\begin{equation}\label{eq:X4n_definition}
  X_{4,n}(x) := 2\sqrt{\log 2} G + \sqrt{2\pi} \sum_{j=0}^n \sum_{k=0}^{2^j - 1} A_{j,k} \nu_{j,k}(x),
\end{equation}
where $G$ and $A_{j,k}$ are independent standard Gaussian random variables.
To see that this indeed has the right covariance one may first notice that
\[Y = \sum_{j=0}^\infty \sum_{k=0}^{2^j - 1} A_{j,k} \psi_{j,k}(x)\]
defines a distribution valued field satisfying $\E \langle Y, u \rangle \langle Y, v \rangle = \langle u, v\rangle$ for all 1-periodic $C^\infty$ functions $u$ and $v$. The field $X_{4,n}(x)$ is essentially the half integral of this field, whose covariance is given by
\[\E \langle I^{1/2} Y, u \rangle \langle I^{1/2} Y, v \rangle = \E \langle Y, I^{1/2} u \rangle \langle Y, I^{1/2} v\rangle = \langle I^{1/2} u, I^{1/2} v \rangle = \langle I u, v \rangle,\]
where the lift semigroup $I^{\beta} f$ for functions $f$ on $S^1$ is defined by describing its action on the Fourier basis: $I^\beta e^{2\pi i n x}=(2\pi|n|)^{-\beta} e^{2\pi i n x}$ for any $n \neq 0$ and $I^\beta 1 = 0$.
A short calculation shows that the operator $I$ has the right integral kernel $\frac{1}{\pi} \log \frac{1}{2|\sin(\pi (s-t))|}$.

\begin{proof}[Proof of Theorem~\ref{co:1}]
  The road map for the proof (as well as for the rest of the section) is as follows:
\begin{enumerate}
  \item We first show in Lemma~\ref{lemma:white_noise_limit} below that the chaos measures constructed from the white noise approximations converge weakly in $L^p$ by comparing it to the exactly scale invariant field on the unit interval by using Proposition~\ref{prop:adding_regular_field}.
  \item Next we verify in Lemma~\ref{lemma:fourier_limit} that the Fourier series approximations give the same result as the white noise approximations. This is done by a direct comparison of their covariances to verify the assumptions of Theorem~\ref{thm:uniqueness}.
  \item Thirdly we deduce in Lemma~\ref{lemma:convolution_limit} that convolution approximations also yield the same result by comparing a convolution against a Gaussian kernel to the Fourier series and again using Theorem~\ref{thm:uniqueness}.
  \item Fourthly we prove in Lemma~\ref{lemma:vaguelet_limit} that a vaguelet approximation yields the same result by comparing it against the white noise approximation.
  \item Finally, in Lemma~\ref{lemma:convg_in_prob} convergence in probability is established for the Fourier series, convolution and vaguelet approximations by invoking Theorem~\ref{thm:convergence_in_probability}.
\end{enumerate}
After the steps (1)--(5) the proof of Theorem~\ref{co:1} is complete.

The following lemma gives a quantitative estimate that can be used to compare fields defined using the hyperbolic white noise on $\H$.
\begin{lemma}\label{lemma:white_noise_holder}
  Let $U$ be an open subset of $\{(x,y) \in \H : y < 1\}$ such that the set $\{(x,y) \in U : y = s\}$ is an interval for all $0 < s < 1$.
  Let $f(s)$ denote the length of this interval and assume that $f(s) \le C s^{1 + \delta}$ for some $\delta > 0$.
  Then the map $(x,s) \mapsto \W(U_s+x)$ admits a modification that is almost surely continuous in $[a,b] \times [0,1]$ for any $a<b$, and almost surely the maps $x \mapsto \W(U_s + x)$ tend to $\W(U + x)$ uniformly when $s \to 0$.
  Here $U_s = \{(x,y) \in U : y > s\}$.
\end{lemma}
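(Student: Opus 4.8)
The plan is to regard $(x,s) \mapsto \W(U_s + x)$ as a Gaussian field on the compact parameter box $[a,b] \times [0,1]$, to show that its canonical $L^2$-metric is H\"older continuous with respect to the Euclidean metric, and then to read off both assertions: the continuous modification comes from Lemma~\ref{lemma:holder_regular_cont}, and the uniform convergence is just uniform continuity on a compact set. First I would record that everything is well defined with uniformly bounded variance. Writing the slice of $U$ at height $\sigma$ as an interval $I(\sigma)$ of length $f(\sigma) \le C\sigma^{1+\delta}$, Fubini for the hyperbolic measure gives
\[
\mhyp(U_s) = \int_s^1 \frac{f(\sigma)}{\sigma^2}\, d\sigma \le C\int_0^1 \sigma^{\delta-1}\, d\sigma = \frac{C}{\delta} < \infty,
\]
so $\W(U + x) = \W(U_0 + x)$ is a genuine Gaussian variable with $\E\,\W(U_s + x)^2 = \mhyp(U_s)$, and all the $\W(U_s + x)$ are jointly Gaussian.

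The core of the argument is the estimate of the $L^2$-increments. Since $\W$ is a white noise, $\E|\W(A) - \W(B)|^2 = \mhyp(A \triangle B)$, so I would bound the symmetric differences separately in the two directions. For a horizontal shift $h = x - x'$ at fixed height the interval structure of the slices is decisive: translating the interval $I(\sigma)$ by $h$ alters it by a set of Lebesgue measure exactly $2\min(|h|, f(\sigma))$, whence
\[
\E|\W(U_s + x) - \W(U_s + x')|^2 \le 2\int_0^1 \frac{\min(|x - x'|,\, C\sigma^{1+\delta})}{\sigma^2}\, d\sigma.
\]
Splitting this integral at $\sigma_0 = (|x-x'|/C)^{1/(1+\delta)}$ bounds the right-hand side by a constant multiple of $|x - x'|^{\delta/(1+\delta)}$. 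For a vertical shift at fixed $x$ with $s < s'$ one has $U_{s'} \subset U_s$, and
\[
\E|\W(U_s + x) - \W(U_{s'} + x)|^2 = \int_s^{s'} \frac{f(\sigma)}{\sigma^2}\, d\sigma \le \frac{C}{\delta}\big((s')^\delta - s^\delta\big) \lesssim |s - s'|^{\min(\delta,1)}.
\]
Combining the two bounds via the triangle inequality in $L^2$ shows that the canonical metric $\rho\big((x,s),(x',s')\big) := \|\W(U_s + x) - \W(U_{s'} + x')\|_{L^2}$ is $\alpha$-H\"older on $\big([a,b]\times[0,1]\big)^2$ for $\alpha = \tfrac12\min\big(\tfrac{\delta}{1+\delta},\,\min(\delta,1)\big)$.

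It then remains only to assemble the pieces. The field $(x,s) \mapsto \W(U_s + x)$ is thus H\"older-regular in the sense of Definition~\ref{def:holder_regular} on the compact doubling space $[a,b]\times[0,1]$, so Lemma~\ref{lemma:holder_regular_cont} supplies an almost surely continuous modification there. On a compact set continuity is uniform continuity, and since $\W(U + x) = \W(U_0 + x)$, letting $s \to 0$ yields $\sup_{x \in [a,b]}|\W(U_s + x) - \W(U + x)| \to 0$ almost surely, as claimed. I expect the horizontal increment estimate to be the main obstacle: it is precisely the step where the hypothesis that the slices are intervals is used (a slice of the same length but split into many components could have symmetric difference of order $f(\sigma)$ under arbitrarily small translations, destroying H\"older continuity in $x$), and the split-integral bookkeeping producing the exponent $\delta/(1+\delta)$ must be handled with care near $\sigma = 0$.
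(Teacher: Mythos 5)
Your proposal is correct and follows essentially the same route as the paper: the same slice-by-slice computation giving the symmetric-difference bound $2\min(|h|,f(\sigma))$ and the exponent $\delta/(1+\delta)$ for horizontal increments, the same $\int_s^{s'} f(\sigma)\sigma^{-2}\,d\sigma \le \frac{C}{\delta}((s')^\delta - s^\delta)$ bound for vertical increments, and the same conclusion via joint H\"older-regularity, Lemma~\ref{lemma:holder_regular_cont}, and uniform continuity on the compact rectangle. The only differences are cosmetic (you split the integral at $\sigma_0$ where the paper evaluates it exactly, and you spell out the conversion of $(s')^\delta - s^\delta$ into a H\"older bound), so nothing further is needed.
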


\begin{proof}
  Let us first show that
  \[\E|\W(U_s+x) - \W(U_s+y)|^2 \le \widetilde{C} |x-y|^{\frac{\delta}{1+\delta}}.\]
  for some $\widetilde{C} > 0$.
  By translation invariance of the covariance it is enough to consider $\E|\W(U_s + x) - \W(U_s)|^2$ and we can clearly assume that $0 < x < 1$.
  Obviously the $1$-measure of the set $((U_s + x) \cap \{y = a\}) \Delta (U_s \cap \{y = a\})$ equals $2\min(f(a), x)$.
  Hence we have
  \begin{align*}
    \E|\W(U_s + x) - \W(U_s)|^2 & = 2 \int_s^1 \frac{\min(f(y), x)}{y^2} \, dy \le 2 \max(1,C) \int_0^1 \frac{\min(y^{1 + \delta}, x)}{y^2} \, dy \\
                              & = 2 \max(1,C) \Big((1 + \delta^{-1}) x^{\frac{\delta}{1+\delta}} - x\Big) \le \widetilde{C} x^{\frac{\delta}{1 + \delta}}.
  \end{align*}
  Notice next that
  \[\E|\W(U_s) - \W(U_t)|^2 = \int_s^t \frac{f(u)}{u^2} \, du \le \frac{C}{\delta} (t^\delta - s^\delta).\]
  It follows that the map $(x,s) \mapsto \W(U_s + x)$ is H\"older-regular both in $x$ and $s$, and therefore also jointly.
  By Lemma~\ref{lemma:holder_regular_cont} the realizations can be chosen to be almost surely continuous in the rectangle $[a,b] \times [0,1]$ which obviously yields the claim.
\end{proof}

The claim concerning the approximating fields $X_{1,n}$ follows from the next lemma by taking into account the definitions \eqref{eq:Ht} and \eqref{eq:X1}. In the proof we identify the field on the unit circle locally as a perturbation of the exactly scaling field on the unit interval. For the chaos corresponding to the last mentioned field the corresponding convergence statement was proven in \cite{duplantier2014renormalization}, and we use this fundamental fact as the basis of the proof of the following lemma.
\begin{lemma}\label{lemma:white_noise_limit}
  Let either $\beta < 1$ and $\rho_t$ be the Lebesgue measure on the circle, or let $\beta = 1$ and $d\rho_t(x) = \sqrt{t} \, dx$.
  Then the measures
  \[e^{\beta \sqrt{2}\Wper(H_t(x)) - \beta^2 \E[\Wper(H_t(x))^2]} \, d\rho_t(x)\]
  defined on the unit circle {\rm (}which we identify with $\reals/\integers${\rm )} converge weakly in $L^p(\Omega)$ to a non-trivial measure $\mutorus{\beta}$ for $0 < p < 1$.
\end{lemma}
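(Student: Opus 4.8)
The plan is to realize the periodic white-noise field as the exactly scaling field on the unit interval plus a \emph{regular} correction field, and then to invoke Proposition~\ref{prop:adding_regular_field} with the base convergence supplied by the result of \cite{duplantier2014renormalization} for the exactly scaling chaos. Concretely, recall the sets $A_t(x)$ from \eqref{eq:At} and set $X_t(x) := \beta\sqrt{2}\,\W(A_t(x))$ and $Z_t(x) := \beta\sqrt{2}(\Wper(H_t(x)) - \W(A_t(x)))$, so that $X_t + Z_t = \beta\sqrt{2}\,\Wper(H_t(x))$ is precisely the field in the statement. Since $X_t$, $Z_t$ and $\Wper(H_t(\cdot))$ are all linear functionals of the single underlying white noise $\W$, they are automatically jointly Gaussian, as required by the proposition.

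To apply Proposition~\ref{prop:adding_regular_field} I would verify its three hypotheses. First, $\mu_t := e^{X_t(x) - \frac{1}{2}\E[X_t(x)^2]}\,d\rho_t(x)$ is the exactly scaling chaos at inverse temperature $\beta$, and for both the subcritical choice $\rho_t = dx$ and the critical choice $d\rho_t = \sqrt{t}\,dx$ its weak $L^p$ convergence (for $0<p<1$) to a non-trivial limit is exactly the fundamental fact established in \cite{duplantier2014renormalization}; note that the only difference between the two regimes is the common normalizing sequence $\rho_t$, which the proposition uses identically for $\mu_t$ and $\nu_t$. Second, one must show $Z_t \to Z$ uniformly almost surely for a Hölder-regular limit field $Z$, with a common Hölder exponent and constant for all $t$. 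Third, one must check that $\E[X_t(x)Z_t(x)]$ converges uniformly to a bounded continuous function; this is a hyperbolic-measure overlap that stabilizes as $t\to\infty$, since after removing the common tip the relevant regions stay bounded away from the line $y=0$, where $\mhyp$ is locally finite.

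The crux is the regularity and convergence of the correction field $Z_t = \beta\sqrt{2}\,\W(D_t(x))$, where $D_t(x) := H_t(x)\ominus A_t(x)$ is the signed symmetric difference (working in a fixed arc, I choose a fundamental domain so that no folding $\bmod\,1$ actually occurs). The geometric point is that near the tip the two cone boundaries agree to high order: from $\frac{2}{\pi}\tan(\pi u) = 2u + O(u^3)$ one finds that the slice of $D_t(x)$ at height $y=s$ consists of two symmetric intervals of width $O(s^3)$, so Lemma~\ref{lemma:white_noise_holder}, applied to each of the two (translation-covariant) slivers with $\delta = 2$, yields joint continuity in $(x,s)$ together with the uniform convergence $\W((D_t)_s + x)\to \W(D_t + x)$ as the truncation $s = e^{-t}\to 0$. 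Crucially the $O(s^3)$ bound is uniform in $t$, which provides the common Hölder data demanded by the proposition. Away from the tip, $D_t(x)$ is contained in $\{y \ge c\}$ for some $c>0$ and depends on $x$ in a manifestly Hölder way, so this bulk part is controlled by elementary estimates of the same flavour as in Lemma~\ref{lemma:white_noise_holder}. I expect organizing this decomposition of $D_t(x)$ into tip and bulk, uniformly in $t$, to be the main technical obstacle.

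Finally, since the exactly scaling field lives on $[0,1]$ while the target measure lives on the circle $\reals/\integers$, I would carry out the comparison locally: cover $S^1$ by finitely many arcs, identify each with a subinterval of $(0,1)$, and test the weak $L^p$ convergence against continuous functions supported on these arcs by means of a partition of unity (cf.\ Remark~\ref{rmk:uniqueness_non_compact}); the choice of representative per arc is precisely where the periodization is absorbed. On each arc Proposition~\ref{prop:adding_regular_field} delivers convergence of $\nu_t$ to $d\nu = e^{Z - \frac{1}{2}\E[Z^2] - \E[XZ]}\,d\mu$, and since $\mu$ is non-trivial by \cite{duplantier2014renormalization} and $Z$ is almost surely finite and continuous, the limit is a non-trivial measure, which we identify with $\mutorus{\beta}$.
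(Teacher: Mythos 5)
Your proposal is correct in substance and follows essentially the same route as the paper's proof: compare the periodic cone field with the exactly scaling field $\beta\sqrt{2}\,\W(A_t(\cdot))$, control the correction field via the cubic tangency $\frac{y}{2}-\frac{1}{\pi}\arctan(\frac{\pi}{2}y)=O(y^3)$ and Lemma~\ref{lemma:white_noise_holder} with $\delta=2$, transfer the convergence through Proposition~\ref{prop:adding_regular_field} with the base case supplied by \cite{duplantier2014renormalization}, and finish by covering the circle with arcs and a partition of unity. The one inaccuracy is your claim that, on a fixed arc with a well-chosen fundamental domain, ``no folding mod 1 actually occurs'': since the cone $H(x)$ has width tending to $1$ as $y\to\infty$, its upper portion does wrap around for every $x$ other than the centre of the domain, so your planar symmetric difference $D_t(x)=H_t(x)\ominus A_t(x)$ computes the covariance of the wrong correction field in the bulk (for $|x|<\delta_0=\tfrac12-\tfrac{\arctan(\pi/2)}{\pi}$ the folding is confined to heights $y>1$, but it genuinely occurs there). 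This does not derail the argument: the paper splits $H_t(x)$ at height $1$, uses that $\Wper(H_t^-(x))=\W(H_t^-(x))$ on the arc, and treats the genuinely periodic top part $\Wper(H_t^+(\cdot))$ as a second regular field in a second application of Proposition~\ref{prop:adding_regular_field}; your single lumped correction field works just as well once its top part is defined through the wrapped set rather than the planar one, the wrapped top part being H\"older-regular by the same slice estimates.
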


\begin{proof}
  As our starting point we know that the measures defined by
  \[d\widetilde{\mu}_t(x) := e^{\beta \sqrt{2} \W(A_t(x)) - \beta^2 \E[\W(A_t(x))^2]} \, d\rho_t(x)\]
  on the interval $[-\frac{1}{2},\frac{1}{2}]$ converge weakly in $L^p(\Omega)$ to a non-trivial measure for $0 < p < 1$ under the assumptions we have on $\beta$ and $\rho_t$. Here $A_t$ stands for the cone defined in \eqref{eq:At} in the introduction. One should keep in mind that we are using the same hyperbolic white noise when defining both $\W$ and $\Wper$.

  Let us split the cones $H_t$ into two sets $H_t^+$ and $H_t^-$, where
  \[H_t^+(x) := H_t(x) \cap \{(x,y) \in \H : y \ge 1\} \text{ and } H_t^-(x) := H_t(x) \cap \{(x,y) \in \H : y < 1\}.\]
  Clearly $\Wper(H_t(x)) = \Wper(H_t^+(x)) + \Wper(H_t^-(x))$ and by elementary geometry it is easy to see that if we restrict $x$ to the interval $(-\delta_0,\delta_0)$ where $\delta_0 = \frac{1}{2} - \frac{\arctan(\pi/2)}{\pi} \approx 0.18$, we have $(\Wper(H_t^-(x)))_{x \in (-\delta_0,\delta_0)} = (\W(H_t^-(x)))_{x \in (-\delta_0,\delta_0)}$. Hence our aim is to first verify the convergence on the interval $(-\delta_0, \delta_0)$.

  Write then $Y_t(x) = \Wper(H_t(x))$, $Y_t^+(x) = \Wper(H_t^+(x))$ and $Y_t^-(x) = \W(H_t^-(x))$ and similarly for the limit fields (which clearly exist in the sense of distributions) write $Y(x) = \Wper(H(x))$, $Y^+(x) = \Wper(H^+(x))$ and $Y^-(x) = \W(H^-(x))$.
  Let $X_t(x) := \W(A_t(x))$ and $X(x) := \W(A(x))$ and define $Z_t(x) := Y_t^-(x) - X_t(x)$ so that we may write $Y_t^-(x) = X_t(x) + Z_t(x)$.
  We next make sure that $Z_t(x)$ is a H\"older regular field, the realizations of which converge almost surely uniformly to the H\"older regular Gaussian field $Z(x) := Y^-(x) - X(x)$.

  The field $Z(x)$ decomposes into a sum $L(x) + R(x) + T(x)$, where $L(x) = -\W(\widetilde{L} + x)$, $R(x) = -\W(\widetilde{R} + x)$ and $T(x) = -\W(\widetilde{T} + x)$ with
  \begin{align*}
    \widetilde{R} & = \{(x,y) : \frac{1}{\pi} \arctan(\frac{\pi}{2} y) < x \le \frac{y}{2}, y < 1\} \\
    \widetilde{L} & = \{(-x,y) : (x,y) \in \widetilde{R}\} \\
    \widetilde{T} & = \{(x,y) : -\frac{1}{2} \le x \le \frac{1}{2}, y \ge 1\}.
  \end{align*}
  We define the truncated versions of $L_t$, $R_t$ and $T_t$ by cutting the respective sets at the level $e^{-t}$ as usual, so that $Z_t(x) = L_t(x) + R_t(x) + T_t(x)$. Clearly $T_t(x) = T(x)$ for $t \ge 0$.

  Let now $f(u) = \frac{u}{2} - \frac{1}{\pi} \arctan(\frac{\pi}{2} u)$.
  Using the Taylor series of $\arctan(u) = u - \frac{u^3}{3} + \frac{u^5}{5} - \frac{u^7}{7} + \dots$ we have
  \[f(u) = \frac{\pi^2}{24} u^3 + O(u^5),\]
  so $f(u) \le C u^3$ for some constant $C > 0$.
  It follows from Lemma~\ref{lemma:white_noise_holder} that $L_t(x)$ and $R_t(x)$ converge almost surely uniformly to the fields $L(x)$ and $R(x)$, so $Z_t(x)$ converges almost surely uniformly to $Z(x)$ as $t \to \infty$.

  Note that $\E[Z_t(x) X_t(x)]$ tends to a finite constant as $t \to \infty$, so the assumptions of Proposition~\ref{prop:adding_regular_field} are satisfied.
  Therefore the measures
  \[\nu_t = \int f(x) e^{\beta \sqrt{2} Y_t^-(x) - \beta^2 \E[Y_t^-(x)^2]} \, d\rho_t(x)\]
  on $(-\delta,\delta)$ converge weakly in $L^p(\Omega)$.
  Because $Y^+$ is a regular field, we may again use Proposition~\ref{prop:adding_regular_field} to conclude that also the measures
  \[\widetilde{\mu}_t(f) = \int f(x) e^{\beta \sqrt{2} Y_t(x) - \beta^2 \E[Y_t(x)^2]} \, d\rho_t(x)\]
  on $(-\delta,\delta)$ converge in $L^p(\Omega)$.
  By the translation invariance of the field the same holds for any interval of length $2\delta$.
  Let $I_1,\dots,I_n$ be intervals of length $2\delta$ that cover the unit circle and let $p_1,\dots,p_n \in C(S^1)$ be a partition of unity with respect to the cover $I_k$.
  The measure
  \[\mu_t(f) = \int f(x) e^{\beta \sqrt{2} Y_t(x) - \beta^2 \E[Y_t(x)^2]} \, d\rho_t(x)\]
  on the whole unit circle can be expressed as a sum $d\mu_t(x) = p_1(x) d\widetilde{\mu}^{(1)}_t(x) + \dots + p_2(x) d\widetilde{\mu}^{(n)}_t(x)$.
  Because each of the summands converges in $L^p(\Omega)$, we see that the also the family of measures $\mu_t$ converges in $L^p(\Omega)$.
\end{proof}

\begin{lemma}\label{lemma:fourier_limit}
  Let either $\beta < 1$ and $d\rho_n(x) = dx$ for all $n \ge 1$ or let $\beta = 1$ and $d\rho_n(x) = \sqrt{\log n} \, dx$.
  Then the measures
  \[d\mu_{2,n}(x) := e^{\beta X_{2,n}(x) - \frac{\beta^2}{2} \E[X_{2,n}(x)^2]} \, d\rho_n(x)\]
  converge in distribution to the random measure $\mutorus{\beta}$ constructed in Lemma~\ref{lemma:white_noise_limit}.
\end{lemma}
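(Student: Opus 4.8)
The plan is to apply Theorem~\ref{thm:uniqueness} with the white noise approximation as the comparison sequence. Concretely, set $X_n := \beta X_{2,n}$ and take as comparison field $\widetilde{X}_n := \beta\sqrt{2}\,\Wper(H_{\log n}(\cdot))$, i.e.\ the field from Lemma~\ref{lemma:white_noise_limit} reparametrised via $t = \log n$. That lemma shows the associated measures $\widetilde{\mu}_n$ converge weakly in $L^p(\Omega)$, hence in distribution, to $\mutorus{\beta}$, which is the standard subcritical (resp.\ critical) chaos and is therefore almost surely non-atomic. Both lemmas use the same normalizing sequence, namely $d\rho_n = dx$ when $\beta < 1$ and $d\rho_n = \sqrt{\log n}\,dx$ when $\beta = 1$, so the setup of Theorem~\ref{thm:uniqueness} applies and it remains only to verify the covariance conditions \eqref{ehto1} and \eqref{ehto2}.

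Writing $\theta := x-y$, a direct computation gives the covariance of the Fourier partial sum as
\[C_n(x,y) = \beta^2\Big(4\log 2 + 2\sum_{k=1}^n \frac{\cos(2\pi k\theta)}{k}\Big),\]
while $\widetilde{C}_n(x,y) = 2\beta^2\,\E[\Wper(H_{\log n}(x))\Wper(H_{\log n}(y))]$. By the classical expansion $\sum_{k\ge 1}k^{-1}\cos(2\pi k\theta) = \log\frac{1}{2|\sin\pi\theta|}$ and the corresponding evaluation of the hyperbolic area of the cones, both converge to the common limiting kernel \eqref{eq:covariance_gff}.

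Condition \eqref{ehto2} I would handle as follows. On the white noise side, for $|\theta| > \delta$ the intersection of the cones $H(x)$ and $H(y)$ has its lowest point at height $\tfrac{2}{\pi}\tan(\tfrac{\pi|\theta|}{2}) \ge c(\delta) > 0$, so once $1/n < c(\delta)$ the truncation at height $1/n$ removes nothing and $\widetilde{C}_n$ equals its limit exactly on $\{|\theta| > \delta\}$. On the Fourier side, the partial sums of $\sum_k k^{-1}\cos(2\pi k\theta)$ converge uniformly on $\{\delta \le |\theta| \le 1-\delta\}$ by Dirichlet-kernel bounds and summation by parts. Subtracting, $|C_n - \widetilde{C}_n| \to 0$ uniformly on $\{d(x,y) > \delta\}$.

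The main obstacle is the uniform bound \eqref{ehto1} near the diagonal, where both covariances diverge logarithmically; the key is that the divergences have matching rate, so that their difference stays bounded. I would establish the two uniform (in $n$ and $\theta$) estimates $\sum_{k=1}^n k^{-1}\cos(2\pi k\theta) = \log\frac{1}{\max(|\theta|,1/n)} + O(1)$ and $\E[\Wper(H_{\log n}(x))\Wper(H_{\log n}(y))] = \log\frac{1}{\max(|\theta|,1/n)} + O(1)$. Here the frequency cutoff at $n$ and the cone truncation at height $1/n$ produce the \emph{same} resolution scale $\approx 1/n$: on the regime $|\theta| < 1/n$ each covariance equals $\log n + O(1)$, while on $1/n \le |\theta| \le 1/2$ each equals $\log\frac{1}{|\theta|} + O(1)$. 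Hence the difference is uniformly bounded, giving \eqref{ehto1}. With \eqref{ehto1} and \eqref{ehto2} verified, Theorem~\ref{thm:uniqueness} yields that $\mu_{2,n}$ converges in distribution to $\mutorus{\beta}$, as claimed.
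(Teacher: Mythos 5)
Your proposal is correct and follows essentially the same route as the paper: both compare the Fourier covariance to the white-noise covariance at the matched scale $t_n=\log n$, show that within distance $\asymp 1/n$ of the diagonal each equals $2\log n+O(1)$ while beyond that scale each stays within $O(1)$ of (indeed, for the white noise, exactly equals) the limiting kernel, and then invoke Theorem~\ref{thm:uniqueness} together with the known non-atomicity of $\mutorus{\beta}$. The only difference is technical rather than structural: where you invoke the classical bound $\sum_{k=1}^n k^{-1}\cos(2\pi k\theta)=\log\frac{1}{\max(|\theta|,1/n)}+O(1)$ via splitting at $k\sim 1/|\theta|$ and Abel summation, the paper obtains the same control through an alternating-series argument at the extrema of the Dirichlet-kernel error $g_n$.
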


\begin{proof}
  Let $f_n(x) := \E[X_{2,n}(x)X_{2,n}(0)]$. It is straightforward to calculate that
  \[f_n(x) = 4 \log 2 + 2 \sum_{k=1}^n \frac{\cos(2\pi k x)}{k}.\]
  In particular $f_n(0) = 4 \log 2 + 2 H_n$, where $H_n$ is the $n$th Harmonic number, $H_n = \log n + \gamma + O(\frac{1}{n})$ with $\gamma$ being the Euler--Mascheroni constant.
  Let $f(x) := 4 \log 2 + 2 \log \frac{1}{2|\sin(\pi x)|}$ be the limit covariance and define $g_n(x) := f(x) - f_n(x)$.
  One can easily compute that for $0 < x \le \frac{1}{2}$ we have
  \[g_n'(x) = -\frac{2 \pi \cos(2\pi(n + \frac{1}{2})x)}{\sin(\pi x)}.\]
  In particular the maximums and minimums of the difference $g_n(x)$ occur at the points $x^{(n)}_j = \frac{2j+1}{4n+2}$, $0 \le j \le n$.
  Consider the telescoping sum
  \begin{equation}\label{eq:telescoping}
    g_n(x^{(n)}_j) = (g_n(x^{(n)}_j) - g_n(x^{(n)}_{j+1})) + \dots + (g_n(x^{(n)}_{n-1}) - g_n(x^{(n)}_n)) + g_n(x^{(n)}_n).
  \end{equation}
  Here the terms in parentheses form an alternating series whose terms are decreasing in absolute value. Moreover, the term $g_n(x^{(n)}_0) - g_n(x^{(n)}_1)$ stays bounded as $n \to \infty$ and the term $g_n(x^{(n)}_n)$ goes to $0$ as $n \to \infty$.
  All this is obvious from writing
  \begin{align}\label{eq:g_n_integral}
    g_n(x^{(n)}_{j+1}) - g_n(x^{(n)}_j) & = \int_{x^{(n)}_j}^{x^{(n)}_{j+1}} g_n'(t) \, dt = -2\pi \int_{\frac{2j+1}{4n+2}}^{\frac{2j+3}{4n+2}} \frac{\cos(\pi(2n+1) t)}{\sin(\pi t)} \, dt \\
                                        & = \frac{-2\pi}{2n+1} \int_{-1/2}^{1/2} \frac{\cos(\pi(y+j+1))}{\sin(\pi \frac{y+j+1}{2n+1})} \, dy \nonumber \\
                                        & = \frac{(-1)^j 2 \pi}{2n + 1} \int_{-1/2}^{1/2} \frac{\cos(\pi y)}{\sin(\pi \frac{y+j+1}{2n+1})} \, dy, \nonumber \\
    g_n(x^{(n)}_n) & = -2\log(2) - 2\sum_{k=1}^n \frac{(-1)^k}{k}. \nonumber
  \end{align}
  In particular we deduce that
  \begin{equation}\label{eq:g_n_bounded}
  \sup_{n \ge 1} \sup_{x \ge x^{(n)}_0} |g_n(x)| < \infty.
  \end{equation}
  Notice also that for any fixed $\varepsilon > 0$ all the maximums and minimums in the range $x > \varepsilon$ are located at the points $x^{(n)}_j$ with $j > 2\varepsilon n + \varepsilon - \frac{1}{2}$, and
  \[\lim_{n \to \infty} \sup_{j > \varepsilon n + \varepsilon - \frac{1}{2}} |g_n(x^{(n)}_{j+1}) - g_n(x^{(n)}_j)| = 0\]
  by \eqref{eq:g_n_integral}. From \eqref{eq:telescoping} it follows that the Fourier covariance converges to the limit covariance uniformly in the set $\{|x| > \varepsilon\}$, a fact that could also be deduced from the localized uniform convergence of the Fourier series of smooth functions.

  Consider next the white noise covariance $h_t(x) := 2 \E[\Wper(H_t(x)) \Wper(H_t(0))]$. By symmetry we may assume all the time that $x>0$. After a slightly tedious calculation one arrives at the formula
  \[h_t(x) = \begin{cases} 4 \log 2 + 2 \log \frac{1}{2\sin(\pi x)}, & \text{if } x > \frac{2}{\pi} \arctan(\frac{\pi}{2} e^{-t}) \\
                           -2 x e^t + 2 t - 2 \log(\cos(\frac{\pi}{2} x)) + \log(\pi^2 e^{-2t} + 4) \\
                           + \frac{2 \arctan(\frac{\pi}{2} e^{-t})}{\frac{\pi}{2} e^{-t}} - 2 \log(\pi), & \text{if } x \le \frac{2}{\pi} \arctan\big(\frac{\pi}{2} e^{-t}\big).
             \end{cases}\]
   Let us consider the approximation along the sequence $t_n = \log(n)$.
   Then $h_t(0) = 2\log(n) + O(1)$.
   Moreover at the point $x_n = \frac{2}{\pi} \arctan(\frac{\pi}{2} e^{-t_n}) = \frac{2}{\pi} \arctan(\frac{\pi}{2n})$ we have
   \[h_{t_n}(x_n) = 4 \log 2 + 2 \log \frac{1}{2\sin(2 \arctan(\frac{\pi}{2n}))} = 2 \log(n) + O(1).\]
   Because the function $h_{t_n}$ without the bounded term $-2 \log(\cos(\frac{\pi}{2} x))$ is linear and decreasing on the interval $[0,x_n]$ we know that it is actually $2 \log(n) + O(1)$ on that whole interval.
   Similarly it is easy to check that for the Fourier series we have $f_n(x) = 2 \log(n) + O(1)$ on the interval $[0,x_n]$
   because $|f'_n(x)| \le 4\pi n$ and $x_n = O(\frac{1}{n})$. Thus $|f_n(x) - h_{t_n}(x)| = O(1)$ for $x \le x_n$. For $x \ge x_n$ it follows from \eqref{eq:g_n_bounded} that $|f_n(x) - h_{t_n}(x)| = |g_n(x)|$ is bounded.
   
   From the above considerations and symmetry it follows that the covariances of the fields $X_{1,n}$ and $X_{2,n}$ satisfy the assumptions of Theorem~\ref{thm:uniqueness}. This finishes the proof.
\end{proof}

\begin{remark}
  The somewhat delicate considerations in the previous proof are necessary because of the fairly unwieldy behaviour of the Dirichlet kernel.
\end{remark}

Next we verify that any convolution approximation to the field $X$ also has the same limit.

\begin{lemma}\label{lemma:convolution_limit}
  Let $\varphi$ be a H\"older continuous mollifier satisfying $\int_{-\infty}^\infty \varphi(x) \, dx = 1$ and $\varphi(x) = O(x^{-1-\delta})$ for some $\delta > 0$.
  Then the fields $X_{3,n}$ defined on $S^1$ by using the periodized field on $\reals${\rm :}
  \[X_{3,n}(x) := (\varphi_{1/n} * X_{per})(x)\]
  are H\"older-regular and the measures
  \[d\mu_{3,n} := e^{\beta X_{3,n}(x) - \frac{\beta^2}{2} \E[X_{3,n}(x)^2]} \, d\rho_n(x),\]
  converge in distribution to $\mutorus{\beta}$.
  Here $\rho_n$ is the Lebesgue measure if $\beta < 1$ and $d\rho_n = \sqrt{\log n} \, dx$ if $\beta = 1$.
\end{lemma}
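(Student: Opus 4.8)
The plan is to deduce the claim from Theorem~\ref{thm:uniqueness} after routing the comparison through an auxiliary convolution against a fixed smooth kernel. First I would record that each $X_{3,n}$ is H\"older-regular. Using that the periodic covariance \eqref{eq:covariance_gff} has only a logarithmic, hence locally integrable, singularity, bilinearity of the covariance gives
\[
\E|X_{3,n}(x)-X_{3,n}(y)|^2 = \iint \big(\varphi_{1/n}(x-s)-\varphi_{1/n}(y-s)\big)\big(\varphi_{1/n}(x-t)-\varphi_{1/n}(y-t)\big)\,C_X(s,t)\,ds\,dt.
\]
Since $\varphi$ is H\"older continuous, so is $\varphi_{1/n}$ with some exponent $\gamma>0$ (with an $n$-dependent constant, which is harmless as $n$ is fixed here), and the right-hand side is bounded by $C_n|x-y|^{2\gamma}$ on the compact circle. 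Thus the pseudometric of Definition~\ref{def:holder_regular} is $\gamma$-H\"older and $X_{3,n}$ is H\"older-regular. The factor $\beta$ only multiplies every covariance by $\beta^2$ and hence does not affect conditions \eqref{ehto1} and \eqref{ehto2}; I suppress it below.

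Next I would fix a Gaussian mollifier $\psi$ and set $\widehat{X}_n := \psi_{1/n} * X_{per}$, with associated measures $\widehat{\mu}_n$. Because $\psi$ trivially satisfies the hypotheses of Corollary~\ref{cor:convolutions_circle}, and so does $\varphi$ by assumption, that corollary immediately gives that the pair $(X_{3,n}, \widehat{X}_n)$ satisfies \eqref{ehto1} and \eqref{ehto2}. Consequently, once we know that $\widehat{\mu}_n$ converges in distribution to the almost surely non-atomic measure $\mutorus{\beta}$, Theorem~\ref{thm:uniqueness} yields $\mu_{3,n} \to \mutorus{\beta}$, which is the desired conclusion. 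Non-atomicity of $\mutorus{\beta}$ is standard for the subcritical and critical GMC; alternatively it propagates from the white-noise or Fourier approximations by Lemma~\ref{lemma:diagonal_approximation}.

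It therefore remains to prove $\widehat{\mu}_n \to \mutorus{\beta}$, and this I would do by comparing $\widehat{X}_n$ with the Fourier approximation $X_{2,n}$ of Lemma~\ref{lemma:fourier_limit} via Theorem~\ref{thm:uniqueness}. Expanding in the Fourier basis one has $C_{\widehat{X}_n}(x,y) = 4\log 2 + 2\sum_{k\ge 1} \tfrac{\widehat{\psi}(k/n)^2}{k}\cos(2\pi k(x-y))$, to be compared with $f_n(x-y)=4\log2 + 2\sum_{k=1}^n \tfrac{1}{k}\cos(2\pi k(x-y))$ from Lemma~\ref{lemma:fourier_limit}. The off-diagonal condition \eqref{ehto2} is the easier half: for $|x-y|>\delta$ both kernels converge uniformly to the true covariance $4\log2 + 2\log\frac{1}{2|\sin(\pi(x-y))|}$, so their difference tends to $0$.

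The main obstacle is the uniform bound \eqref{ehto1} near the diagonal, where both covariances blow up like $2\log n$. As in Lemma~\ref{lemma:fourier_limit} I would show that each of $C_{\widehat{X}_n}(x,y)$ and $f_n(x-y)$ equals $2\log n + O(1)$ uniformly for $|x-y|\lesssim 1/n$; for $\widehat{X}_n$ this uses $\widehat{\psi}(0)=1$, smoothness of $\widehat{\psi}$, and the standard estimate $\sum_{k\ge1}\tfrac{\widehat{\psi}(k/n)^2}{k} = \log n + O(1)$, while for $|x-y|\gtrsim 1/n$ both are within $O(1)$ of the true kernel, which for the Fourier side is exactly the content of \eqref{eq:g_n_bounded}. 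Summation by parts is the tool that keeps these estimates uniform in the separation $|x-y|$, and, as the remark following Lemma~\ref{lemma:fourier_limit} warns, the unwieldy Dirichlet kernel makes this the delicate step; the Gaussian factor $\widehat{\psi}(k/n)^2$ only improves the tail behaviour and creates no new difficulty.
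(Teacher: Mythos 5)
Your proposal is correct and follows essentially the same route as the paper: both reduce to a single Gaussian kernel via Corollary~\ref{cor:convolutions_circle}, and then compare that kernel's covariance, written as the Fourier multiplier $e^{-4\pi^2k^2/n^2}$ acting on the coefficients of $\sum_{k\ge 1}\cos(2\pi kx)/k$, against the truncated Fourier covariance $f_n$ of Lemma~\ref{lemma:fourier_limit}, invoking Theorem~\ref{thm:uniqueness}. The one place you diverge is the verification of \eqref{ehto1}, and there the paper's argument is substantially simpler than what you outline: instead of showing separately that both covariances equal $2\log n+O(1)$ for $|x-y|\lesssim 1/n$ and lie within $O(1)$ of the true kernel for $|x-y|\gtrsim 1/n$ (which, as you note, forces a summation-by-parts argument to extract cancellation from the cosines in the Gaussian tail), the paper subtracts the two covariances directly, obtaining
\[
2\sum_{k=1}^n\frac{\cos(2\pi kx)}{k}\bigl(1-e^{-4\pi^2k^2/n^2}\bigr)\;-\;2\sum_{k=n+1}^\infty\frac{\cos(2\pi kx)}{k}\,e^{-4\pi^2k^2/n^2},
\]
and bounds each sum termwise in absolute value, uniformly in $x$: the first using $1-e^{-u}\le u$, the second by comparison with $2\int_1^\infty e^{-4\pi^2s^2}s^{-1}\,ds$. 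No case analysis and no cancellation are needed, because after subtraction the multipliers $\chi_{\{k\le n\}}$ and $e^{-4\pi^2k^2/n^2}$ differ by coefficients that are absolutely summable against $1/k$. So the ``delicate step'' you anticipate dissolves; your two-regime argument with Abel summation would also work, but it re-does for the Gaussian side the kind of Dirichlet-kernel analysis that the paper only needs once, in the Fourier-versus-white-noise comparison of Lemma~\ref{lemma:fourier_limit}.
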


\begin{proof}
  It is enough to show the assumptions of Theorem~\ref{thm:uniqueness} for one kernel satisfying the conditions of the lemma because of Corollary~\ref{cor:convolutions_circle}, and because of Lemma~\ref{lemma:fourier_limit} we can do our comparison against the covariance obtained from the Fourier series construction.
  We will make the convenient choice of $\varphi(x) = \frac{1}{\sqrt{2\pi}} e^{-\frac{x^2}{2}}$ as our kernel.
  The covariance of the field $\varphi_\varepsilon * X_{per}$ is given by $(\psi_\varepsilon * f)(x-y)$,
  where $\psi_\varepsilon(x) = (\varphi_\varepsilon * \varphi_\varepsilon(- \cdot))(x) = \frac{1}{2\varepsilon\sqrt{\pi}} e^{-\frac{x^2}{4\varepsilon^2}}$ and $f(x) = 4 \log 2 + 2 \log \frac{1}{2|\sin(\pi x)|}$.

  Using the identity $\log \frac{1}{2|\sin(\pi x)|} = \sum_{k=1}^\infty \frac{\cos(2\pi k x)}{k}$, a short computation shows that we can write the difference of the covariances of $X_{2,n}$ (the Fourier field) and $X_{3,n}$ in the form
  \[2\sum_{k=1}^n \frac{\cos(2\pi k x)}{k}(1 - e^{-4\pi^2 \frac{k^2}{n^2}}) - 2\sum_{k=n+1}^\infty \frac{\cos(2\pi k x)}{k} e^{-4\pi^2 \frac{k^2}{n^2}}.\]
  Since $1 - e^{-x} \le x$ for $x \ge 0$, the first term is bounded by $2 \sum_{k=1}^n \frac{4\pi^2 k}{n^2} \le 16\pi^2$.
  In turn the second term is bounded from above by
  \[2\int_{n}^\infty \frac{e^{-4\pi^2 \frac{t^2}{n^2}}}{t} \, dt = 2\int_{1}^\infty \frac{e^{-4\pi^2 s^2}}{s} \, ds.\]
  Because both of the covariances converge locally uniformly outside the diagonal, we again see that the assumptions of Theorem~\ref{thm:uniqueness} are satisfied.
\end{proof}

Our next goal is to prove the convergence in distribution for the vaguelet approximation $X_{4,n}$. We start with the following useful estimate.

\begin{lemma}\label{lemma:half_integral_estimate}
  Let $f \colon \reals \to \reals$ be a bounded integrable function and let
  \[F(x) = \frac{1}{\sqrt{2\pi}} \int_{-\infty}^\infty \frac{f(t)}{\sqrt{|x-t|}} \, dt\]
  be its half-integral. Then there exists a constant $C > 0$ {\rm (}not depending on $f${\rm )} such that for all $x,y \in \reals$ we have
  \[|F(x) - F(y)| \le C \|f\|_\infty \sqrt{|x-y|}.\]
\end{lemma}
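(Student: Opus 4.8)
The plan is to write the difference of the two half-integrals directly as a single integral against $f$, pull out $\|f\|_\infty$, and then reduce the remaining kernel integral to a dimensionless constant by a scaling substitution. First I would write
\[
F(x) - F(y) = \frac{1}{\sqrt{2\pi}} \int_{-\infty}^\infty f(t) \left( \frac{1}{\sqrt{|x-t|}} - \frac{1}{\sqrt{|y-t|}} \right) dt,
\]
so that estimating $|f| \le \|f\|_\infty$ gives
\[
|F(x) - F(y)| \le \frac{\|f\|_\infty}{\sqrt{2\pi}} \int_{-\infty}^\infty \left| \frac{1}{\sqrt{|x-t|}} - \frac{1}{\sqrt{|y-t|}} \right| dt.
\]
Here one first checks that $F$ is well defined: boundedness of $f$ controls the local singularity $|x-t|^{-1/2}$, which is integrable, while integrability of $f$ handles the region where $t$ is far from $x$ and the kernel is bounded.

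Next I would extract the H\"older factor by the substitution $t = x + h s$ with $h := |x-y|$. This turns the kernels into $|x-t| = h|s|$ and $|y-t| = h|1-s|$ (after the reflection $s \mapsto -s$ in the case $y < x$), and the Jacobian $dt = h\,ds$ combines with the $h^{-1/2}$ coming from each kernel to leave exactly one factor $\sqrt{h}$:
\[
\int_{-\infty}^\infty \left| \frac{1}{\sqrt{|x-t|}} - \frac{1}{\sqrt{|y-t|}} \right| dt = \sqrt{h} \int_{-\infty}^\infty \left| \frac{1}{\sqrt{|s|}} - \frac{1}{\sqrt{|1-s|}} \right| ds.
\]
It then only remains to verify that the universal constant
\[
I := \int_{-\infty}^\infty \left| \frac{1}{\sqrt{|s|}} - \frac{1}{\sqrt{|1-s|}} \right| ds
\]
is finite, after which the lemma follows with $C = I/\sqrt{2\pi}$.

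The one genuine point is the finiteness of $I$, and this is where the cancellation between the two kernels is essential, since neither $|s|^{-1/2}$ nor $|1-s|^{-1/2}$ is integrable at infinity on its own. I would split the real line into the bounded region $|s| \le 2$ and its complement. On the bounded region I simply dominate the integrand by $|s|^{-1/2} + |1-s|^{-1/2}$ and integrate the two integrable power singularities at $s=0$ and $s=1$ separately. On $|s| > 2$ both $|s|$ and $|1-s|$ are bounded below and differ by at most $1$, so applying the mean value theorem to $u \mapsto u^{-1/2}$, whose derivative is $-\tfrac{1}{2} u^{-3/2}$, yields a bound of order $|s|^{-3/2}$, which is integrable on $|s|>2$. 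This gives $I < \infty$ and completes the argument. I expect this tail estimate to be the only nonroutine step, precisely because it relies on the difference structure rather than on either term individually.
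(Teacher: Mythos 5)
Your proof is correct, and it establishes the key kernel inequality by a different route than the paper. Both arguments reduce the lemma to showing
\[
\int_{-\infty}^\infty \left| \frac{1}{\sqrt{|x-t|}} - \frac{1}{\sqrt{|y-t|}} \right| dt \le C \sqrt{|x-y|},
\]
but from there the methods diverge. The paper proceeds by direct estimation: it rewrites the integrand via the algebraic identity
\[
\left| \frac{1}{\sqrt{a}} - \frac{1}{\sqrt{b}} \right| = \frac{|b-a|}{a\sqrt{b} + \sqrt{a}\,b}
\quad (a = |x-t|,\ b = |y-t|),
\]
bounds the numerator by $|x-y|$ using the triangle inequality, and then splits the line into the four regions $(-\infty,x]$, $[x,\tfrac{x+y}{2}]$, $[\tfrac{x+y}{2},y]$, $[y,\infty)$ (taking $x<y$), estimating the integral on each piece separately. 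You instead exploit the homogeneity of the kernel: the substitution $t = x + hs$ with $h = |x-y|$ extracts the factor $\sqrt{h}$ exactly, reducing everything to the finiteness of the universal constant $I = \int_{-\infty}^\infty \bigl| |s|^{-1/2} - |1-s|^{-1/2} \bigr| \, ds$, which you verify by dominating the two integrable singularities on $|s| \le 2$ and using the mean value theorem to get an $O(|s|^{-3/2})$ tail on $|s| > 2$. Your scaling argument has the advantage that the H\"older exponent $1/2$ appears automatically and transparently from dimensional analysis, with all the analytic work concentrated in a single dimensionless integral; the paper's approach is more hands-on but avoids the change of variables and keeps the cancellation estimate (numerator bounded by $|x-y|$) in one uniform algebraic step rather than distinguishing a bounded region from a tail region. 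Both are complete and of comparable length.
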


\begin{proof}
  Clearly it is enough to show that
  \[\int_{-\infty}^\infty \left| \frac{1}{\sqrt{|x-t|}} - \frac{1}{\sqrt{|y-t|}} \right| \, dt \le C \sqrt{|x-y|}.\]
  Notice that the integrand can be approximated by
  \begin{align*}
    \left| \frac{1}{\sqrt{|x-t|}} - \frac{1}{\sqrt{|y-t|}} \right| & = \frac{\big||y-t| - |x-t|\big|}{|x-t|\sqrt{|y-t|} + \sqrt{|x-t|}|y-t|} \\
                                                            & \le \frac{|x-y|}{|x-t|\sqrt{|y-t|} + \sqrt{|x-t|}{|y-t|}}.
  \end{align*}
  We can without loss of generality assume that $x < y$ and split the domain of integration to the intervals $(-\infty, x]$, $[x,\frac{x+y}{2}]$, $[\frac{x+y}{2}, y]$ and $[y,\infty)$. On each of the intervals the value of the integral is easily estimated to be less than some constant times $\sqrt{|x-y|}$, which gives the result.
\end{proof}

In the lemmas below we recall the definition of the field $X_{4,n}$ in \eqref{eq:X4n_definition}.

\begin{lemma}\label{lemma:vaguelet_periodized_decay}
  We have
  \[\nu_{j,0}(x) \le \frac{c}{(1 + 2^{j} \dist(x, 0))^{1 + \delta}}\]
  for some constant $c > 0$. Here $\dist(x,y) = \min \{|x - y + k| : k \in \integers\}$.
\end{lemma}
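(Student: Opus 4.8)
The plan is to reduce everything to the single-vaguelet decay estimate \eqref{eq:vaguelet_decay} together with a convergent series bound. First, since
\[\nu_{j,0}(x) = \sum_{l \in \integers} \nu(2^j(x - l))\]
is $1$-periodic in $x$, it suffices to prove the claim for $x \in [-\tfrac12, \tfrac12]$, where $\dist(x,0) = |x|$. (As in \eqref{eq:vaguelet_decay}, I will in fact bound $|\nu_{j,0}(x)|$.) Applying \eqref{eq:vaguelet_decay} term by term gives
\[|\nu_{j,0}(x)| \le \sum_{l \in \integers} \frac{C}{(1 + 2^j|x-l|)^{1+\delta}},\]
and the idea is to split off the term $l=0$, which equals exactly $C(1+2^j|x|)^{-(1+\delta)}$ and hence already has the shape of the desired right-hand side, and then to absorb the remaining sum over $l \neq 0$ into the same expression.

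For the tail I would use the elementary bound $|x - l| \ge |l|/2$, valid for every $x \in [-\tfrac12,\tfrac12]$ and every $l \neq 0$. This gives $1 + 2^j|x-l| \ge 2^{j-1}|l|$, so that
\[\sum_{l \neq 0} \frac{C}{(1 + 2^j|x-l|)^{1+\delta}} \le \frac{2C}{2^{(j-1)(1+\delta)}} \sum_{l \ge 1} \frac{1}{l^{1+\delta}} = \frac{2C\,\zeta(1+\delta)}{2^{(j-1)(1+\delta)}},\]
which is $O(2^{-j(1+\delta)})$ uniformly in $x$.

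It remains to compare this tail with the target. Because $\dist(x,0) = |x| \le \tfrac12$, one has $1 + 2^j|x| \le 2^j$ for $j \ge 1$, and therefore $(1 + 2^j|x|)^{-(1+\delta)} \ge 2^{-j(1+\delta)}$. Hence the tail estimate of the previous paragraph is itself bounded by a constant multiple of $(1 + 2^j\dist(x,0))^{-(1+\delta)}$, and adding the $l=0$ contribution gives the asserted inequality with an appropriate constant $c$. The finitely many remaining small values of $j$ (in particular $j=0$) are harmless, since there both sides are bounded above and below by positive constants.

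The computation is routine; the one point I would flag as the main obstacle is obtaining the correct decay rate of the tail. A crude comparison with $\int_0^\infty (1 + 2^j t)^{-(1+\delta)}\,dt \asymp 2^{-j}$ is too lossy and would not match the right-hand side. The gain comes from the fact that the summation over $l \neq 0$ starts at distance of order $1$ from $x$, where the summand is already as small as $2^{-j(1+\delta)}$; this sharper rate is precisely what is needed to capture the worst case $x \approx \tfrac12$, where the claimed bound is itself of size $2^{-j(1+\delta)}$.
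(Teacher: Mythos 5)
Your proof is correct and follows essentially the same route as the paper: apply the pointwise decay \eqref{eq:vaguelet_decay} term by term, keep the nearest lattice term(s) which already have the desired shape, and show the tail over the remaining $l$ is $O(2^{-j(1+\delta)})$, which is comparable to the worst case of the right-hand side since $\dist(x,0)\le\tfrac12$. The only differences are cosmetic: you reduce to $x\in[-\tfrac12,\tfrac12]$ (one near term) and sum the tail via $\zeta(1+\delta)$, whereas the paper works on $[0,1)$ (two near terms) and uses an integral comparison, both yielding the same rate.
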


\begin{proof}
  Without loss of generality we may assume that $0 \le x < 1$ and let $d = \dist(x,0)$.
  We have
  \begin{align*}
    |\nu_{j,0}(x)| \le & \sum_{l \in \integers} \frac{C}{(1 + 2^j |x-l|)^{1+\delta}} \le \sum_{l=0}^\infty \frac{C}{(1 + 2^j x + 2^j l)^{1 + \delta}} + \sum_{l=1}^\infty \frac{C}{(1 + 2^j l - 2^j x)^{1 + \delta}} \\
    \le & \frac{2C}{(1 + 2^j d)^{1 + \delta}} + 2\sum_{l=1}^\infty \frac{C}{(1 + 2^j l)^{1 + \delta}} \\
  \le & \frac{2C}{(1 + 2^j d)^{1 + \delta}} + \frac{2C}{(1 + 2^j)^{1 + \delta}} + 2 \int_1^\infty \frac{C}{(1 + 2^j u)^{1 + \delta}} \, du \\
    \le & 4C \frac{1}{(1 + 2^j d)^{1 + \delta}} + \frac{2C}{\delta} \frac{1}{2^j (1 + 2^j)^{\delta}} \le \frac{c}{(1 + 2^jd)^{1 + \delta}}.\qedhere
  \end{align*}
\end{proof}

\begin{lemma}\label{lemma:vaguelet_abs_sum}
  There exists a constant $A > 0$ such that
  \[\sum_{k=0}^{2^j - 1} |\nu_{j,k}(x)| \le A\]
  for all $j \ge 0$ and $x \in \reals$.
\end{lemma}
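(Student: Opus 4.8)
The plan is to reduce the sum to a single periodized vaguelet and then exploit the regular spacing of the translates together with the decay estimate of Lemma~\ref{lemma:vaguelet_periodized_decay}. First I would record the translation identity $\nu_{j,k}(x) = \nu_{j,0}(x - 2^{-j}k)$, which follows directly from the definition \eqref{eq:periodized_vaguelet} upon writing $2^{j}(x-l)-k = 2^{j}(x - 2^{-j}k - l)$ and absorbing the shift into the summation index $l$. This rewrites the quantity to be estimated as
\[\sum_{k=0}^{2^j - 1} |\nu_{j,k}(x)| = \sum_{k=0}^{2^j - 1} |\nu_{j,0}(x - 2^{-j}k)|,\]
so that Lemma~\ref{lemma:vaguelet_periodized_decay} can be applied termwise to obtain
\[\sum_{k=0}^{2^j - 1} |\nu_{j,k}(x)| \le \sum_{k=0}^{2^j - 1} \frac{c}{(1 + 2^{j}\dist(x - 2^{-j}k, 0))^{1 + \delta}}.\]

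The crucial observation is then a counting argument. The $2^j$ points $x - 2^{-j}k \bmod 1$, for $0 \le k \le 2^j - 1$, are equally spaced on the circle $\reals/\integers$ with gap $2^{-j}$. Hence for each integer $m \ge 0$ the number of indices $k$ for which $2^{j}\dist(x - 2^{-j}k, 0)$ falls in $[m, m+1)$ is bounded by an absolute constant: the corresponding region $\{t \in \reals/\integers : \dist(t,0) \in [m2^{-j},(m+1)2^{-j})\}$ has total length $2^{1-j}$ and therefore meets only boundedly many of the equally spaced points, uniformly in $j$ and $m$. Grouping the summands by the value of $m$ and using monotonicity of the decay factor yields
\[\sum_{k=0}^{2^j - 1} \frac{c}{(1 + 2^{j}\dist(x - 2^{-j}k, 0))^{1 + \delta}} \le c' \sum_{m=0}^{\infty} \frac{1}{(1 + m)^{1 + \delta}} =: A < \infty,\]
where the final series converges precisely because $\delta > 0$. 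Since $A$ depends neither on $j$ nor on $x$, this is exactly the claimed uniform bound.

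The main obstacle will be making the counting step fully rigorous, that is, verifying that each dyadic annulus around $0$ captures only $O(1)$ of the equally spaced points with a constant independent of $j$; a minor point of care is the treatment of the two arcs constituting each annulus and of the endpoints, but these affect only the value of the absolute constant. Once the counting is in place the remainder is a routine convergent-series estimate.
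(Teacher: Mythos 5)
Your proposal is correct and follows essentially the same route as the paper: the translation identity $\nu_{j,k}(x) = \nu_{j,0}(x - k2^{-j})$, termwise application of Lemma~\ref{lemma:vaguelet_periodized_decay}, and the observation that the equally spaced points contribute at most boundedly many terms per unit distance band, yielding the convergent series $\sum_{m\ge 0}(1+m)^{-(1+\delta)}$. The paper compresses your counting step into the single factor $2c$ (at most one point per spacing $2^{-j}$ on each side of $0$), but the argument is the same.
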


\begin{proof}
  By using Lemma~\ref{lemma:vaguelet_periodized_decay} and the fact that $\nu_{j,k}(x) = \nu_{j,0}(x - k 2^{-j})$ we have
  \begin{align*}
    \sum_{k=0}^{2^j - 1} |\nu_{j,k}(x)| = & \sum_{k=0}^{2^j - 1} |\nu_{j,0}(x - k 2^{-j})| \le \sum_{k=0}^{2^j - 1} \frac{c}{(1 + 2^j \dist(x - k 2^{-j},0))^{1 + \delta}} \\
    \le & 2c \sum_{k=0}^{\infty} \frac{1}{(1 + k)^{1 + \delta}} < \infty.\qedhere
  \end{align*}
\end{proof}

\begin{lemma}\label{lemma:vaguelet_product_sum}
  There exists a constant $B > 0$ such that for all $n \ge 0$ and $x,y \in \reals$ satisfying $\dist(x,y) \ge 2^{-n}$ we have
  \[\sum_{j=n}^{\infty} \sum_{k=0}^{2^j - 1} |\nu_{j,k}(x) \nu_{j,k}(y)| \le B.\]
\end{lemma}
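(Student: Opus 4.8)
The plan is to collapse the double sum into a single geometric series in $j$ by exploiting that, at every scale $j \ge n$, the two points $x$ and $y$ are separated by at least one mesh width. Throughout write $d := \dist(x,y)$ and recall from the construction that $\nu_{j,k}(x) = \nu_{j,0}(x - k2^{-j})$, so that $\dist(x - k2^{-j}, 0) = \dist(x, k2^{-j})$. Lemma~\ref{lemma:vaguelet_periodized_decay} then yields the pointwise bounds
\[|\nu_{j,k}(x)| \le \frac{c}{(1 + 2^j \dist(x, k2^{-j}))^{1+\delta}}, \qquad |\nu_{j,k}(y)| \le \frac{c}{(1 + 2^j \dist(y, k2^{-j}))^{1+\delta}}.\]

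First I would fix a scale $j \ge n$ and estimate the inner sum $S_j := \sum_{k=0}^{2^j-1} |\nu_{j,k}(x)\nu_{j,k}(y)|$. The key observation is that $\dist$ is a genuine metric on the circle, so the triangle inequality $\dist(x, k2^{-j}) + \dist(y, k2^{-j}) \ge d$ forces $\max\big(\dist(x, k2^{-j}), \dist(y, k2^{-j})\big) \ge d/2$ for every $k$. Hence at least one of the two factors above is bounded by $c\,(1 + 2^j d/2)^{-(1+\delta)}$, and pulling this factor out gives
\[|\nu_{j,k}(x)\nu_{j,k}(y)| \le \frac{c}{(1 + 2^j d/2)^{1+\delta}}\,\big(|\nu_{j,k}(x)| + |\nu_{j,k}(y)|\big).\]
Summing over $k$ and invoking Lemma~\ref{lemma:vaguelet_abs_sum} for both $x$ and $y$ produces
\[S_j \le \frac{2Ac}{(1 + 2^j d/2)^{1+\delta}}.\]

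It remains to sum over $j \ge n$, and this is where the standing hypothesis $d \ge 2^{-n}$ enters decisively: for $j \ge n$ one has $2^j d \ge 2^{j-n} \ge 1$, whence $1 + 2^j d/2 \ge 2^{j-n-1}$ and therefore
\[\sum_{j=n}^\infty S_j \le 2Ac \sum_{j=n}^\infty 2^{-(1+\delta)(j-n-1)} = 2Ac\,2^{1+\delta}\sum_{m=0}^\infty 2^{-(1+\delta)m} = \frac{2Ac\,2^{1+\delta}}{1 - 2^{-(1+\delta)}} =: B,\]
which is finite and independent of $n$, $x$ and $y$; this would complete the proof.

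I do not anticipate any serious obstacle here. The only step demanding care is the separation argument in the second paragraph, where one genuinely needs that $\dist$ satisfies the triangle inequality so that every dyadic node $k2^{-j}$ is at distance at least $d/2$ from one of the two points. Once that is in place, the remainder is a routine geometric summation, and it is precisely the assumption $\dist(x,y)\ge 2^{-n}$ that upgrades the per-scale bound into a convergent series rather than a merely termwise estimate.
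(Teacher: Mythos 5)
Your proof is correct, and it takes a genuinely different (and arguably cleaner) route than the paper's. Both arguments start from the decay bound of Lemma~\ref{lemma:vaguelet_periodized_decay} and the observation that the separation $\dist(x,y)\ge 2^{-n}$ forces, for every node $k2^{-j}$, at least one of the two factors to be small; but from there the mechanics diverge. The paper never invokes Lemma~\ref{lemma:vaguelet_abs_sum} in this proof: instead it bounds each term by $2c^2/\max\big((1+k)^{1+\delta},(1+2^{j-n-1})^{1+\delta}\big)$ and splits the $k$-sum at $k\approx 2^{j-n-1}$, handling the tail by an integral comparison; this yields the per-scale bound $O\big(2^{-\delta(j-n)}\big)$, and as written it implicitly relies on reindexing the nodes by their distance rank from $x$ or $y$ (the literal inequality $2^j\dist(x,k2^{-j})\gtrsim k$ fails for individual $k$ without that reordering). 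Your trick of writing $|\nu_{j,k}(x)\nu_{j,k}(y)|\le M_j\big(|\nu_{j,k}(x)|+|\nu_{j,k}(y)|\big)$ with $M_j = c\,(1+2^{j}d/2)^{-(1+\delta)}$, justified by the triangle inequality for the circle metric, and then summing the surviving factor via Lemma~\ref{lemma:vaguelet_abs_sum}, encapsulates that reindexing inside an already-proved lemma and makes the argument more modular; it also gives the slightly stronger per-scale decay $O\big(2^{-(1+\delta)(j-n)}\big)$. Either rate is more than enough for the geometric summation over $j\ge n$, so both proofs deliver a constant $B$ independent of $n$, $x$ and $y$; there is no gap in your argument.
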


\begin{proof}
  By using Lemma~\ref{lemma:vaguelet_periodized_decay} and the fact that $\nu_{j,k}(x) = \nu_{j,0}(x - k 2^{-j})$ we may estimate
  \begin{align*}
    & \sum_{j=n}^\infty \sum_{k=0}^{2^j - 1} |\nu_{j,k}(x) \nu_{j,k}(y)| \\
    \le & \sum_{j=n}^\infty \sum_{k=0}^{2^j - 1} \frac{c^2}{(1 + 2^j \dist(x - k 2^{-j},0))^{1+\delta} (1 + 2^j \dist(y - k 2^{-j},0))^{1+\delta}} \\
    \le & \sum_{j=n}^\infty \sum_{k=0}^{2^j - 1} \frac{2c^2}{\max((1 + k)^{1+\delta}, (1 + 2^{j-n-1})^{1 + \delta})} \\
    \le & 2c^2 \sum_{j=n}^\infty \Big( \frac{1 + 2^{j-n-1}}{(1 + 2^{j-n-1})^{1 + \delta}} + \sum_{k=2^{j-n-1} + 1}^\infty \frac{1}{(1 + k)^{1+\delta}} \Big) \\
    \le & 2c^2 \sum_{j=n}^\infty \Big( \frac{1}{(1 + 2^{j-n-1})^{\delta}} + \int_{2^{j-n-1}}^\infty \frac{1}{(1 + x)^{1+\delta}} \, dx \Big) \\
    \le & 2c^2(1+\frac{1}{\delta}) \sum_{j=n}^\infty \frac{1}{(1 + 2^{j-n-1})^\delta} \le 2c^2(1 + \frac{1}{\delta}) \sum_{j=0}^\infty 2^{-\delta(j-1)} = B < \infty.\qedhere
  \end{align*}
\end{proof}

\begin{lemma}\label{lemma:vaguelet_limit}
  Let either $\beta < 1$ and $d\rho_n(x) = dx$ for all $n \ge 1$ or let $\beta = 1$ and $d\rho_n(x) = \sqrt{n\log 2} \, dx$. Then the measures
  \[d\mu_{4,n} := e^{\beta X_{4,n}(x) - \frac{\beta^2}{2} \E[X_{4,n}(x)^2]} \, d\rho_n(x)\]
  converge in distribution to the random measure $\mutorus{\beta}$ constructed in Lemma~\ref{lemma:white_noise_limit}.
\end{lemma}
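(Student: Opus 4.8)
The plan is to deduce the claim from Theorem~\ref{thm:uniqueness} by comparing the vaguelet fields $X_{4,n}$ against the truncated white noise fields
\[\widetilde{X}_n(x) := \sqrt{2}\,\Wper(H_{n\log 2}(x)),\]
using in both cases the common normalizing sequence $d\rho_n(x) = \sqrt{n\log 2}\,dx$ (resp.\ $d\rho_n = dx$ when $\beta < 1$). Since $e^{-(n\log 2)} = 2^{-n}$, the white noise is cut precisely at the dyadic scale $2^{-n}$ reached by the vaguelet sum \eqref{eq:X4n_definition}, so the two families live at the same resolution and the normalizations match ($\sqrt{n\log 2} = \sqrt{t_n}$ with $t_n = n\log 2$). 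By Lemma~\ref{lemma:white_noise_limit} the measures $\widetilde{\mu}_n$ converge weakly in $L^p$, hence in distribution, to the almost surely non-atomic measure $\mutorus{\beta}$, so the hypotheses of Theorem~\ref{thm:uniqueness} on $\widetilde{\mu}_n$ are in force. Lemma~\ref{lemma:half_integral_estimate} applied to each $\nu_{j,k}$ shows that $X_{4,n}$ is H\"older-regular, so it remains only to verify the covariance conditions \eqref{ehto1} and \eqref{ehto2} for
\[C_n(x,y) = 4\log 2 + 2\pi\sum_{j=0}^n\sum_{k=0}^{2^j-1}\nu_{j,k}(x)\nu_{j,k}(y) \quad\text{and}\quad \widetilde{C}_n(x,y) = h_{n\log 2}(x-y),\]
where $h_t$ is the explicit white noise covariance computed in the proof of Lemma~\ref{lemma:fourier_limit}.

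The central identity I would use is that the \emph{full} vaguelet sum reproduces the exact kernel, $2\pi\sum_{j=0}^\infty\sum_k\nu_{j,k}(x)\nu_{j,k}(y) = 2\log\frac{1}{2|\sin(\pi(x-y))|}$, since the $\nu_{j,k}$ are the half-integrals of an orthonormal wavelet basis and $I$ has kernel $\tfrac1\pi\log\frac1{2|\sin(\pi(s-t))|}$. Writing $f(x-y)$ for the exact kernel $4\log 2 + 2\log\frac1{2|\sin(\pi(x-y))|}$, this gives $C_n(x,y) = f(x-y) - 2\pi T_n(x,y)$ with tail $T_n(x,y) = \sum_{j=n+1}^\infty\sum_k\nu_{j,k}(x)\nu_{j,k}(y)$. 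Condition \eqref{ehto2} is then the easy half: for $\dist(x,y) > \delta$ the estimate behind Lemma~\ref{lemma:vaguelet_product_sum} gives geometric decay of the tail, so $T_n(x,y)\to 0$ uniformly; and for such $x,y$ the white noise formula gives $\widetilde{C}_n(x,y) = f(x-y)$ once $n$ is large enough that $\tfrac{2}{\pi}\arctan(\tfrac\pi2 2^{-n}) < \delta$. Hence $C_n - \widetilde{C}_n = -2\pi T_n \to 0$ uniformly off the diagonal.

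The hard part will be the uniform bound \eqref{ehto1}, which is genuinely a near-diagonal matching problem: near the diagonal both $f - \widetilde{C}_n$ and $2\pi T_n$ blow up like $2\log\frac{1}{2^n|x-y|}$, and I must show these singularities cancel to leave a bounded difference $D_n = C_n - \widetilde{C}_n = (f-\widetilde{C}_n) - 2\pi T_n$. For the white noise term the explicit formula for $h_{n\log 2}$ shows that, writing $\rho = 2^n|x-y|$, one has $f - \widetilde{C}_n = 2\log\frac1\rho + O(1)$ uniformly for $\rho \le 1$ and $= 0$ for $\rho$ large. For the vaguelet tail I would exploit the \emph{approximate dyadic self-similarity} of the periodized vaguelets: using $\nu_{j,k}(x) = \nu_{j,0}(x - k2^{-j})$ together with Lemma~\ref{lemma:vaguelet_periodized_decay}, the periodization is negligible at scales $\le 2^{-n}$, so after rescaling by $2^{n}$ the tail $2\pi T_n(x,y)$ equals, up to a uniformly bounded error controlled by Lemmas~\ref{lemma:vaguelet_abs_sum} and~\ref{lemma:vaguelet_product_sum}, a fixed profile $\Theta(2^n x, 2^n y)$ built from the non-periodized line vaguelets, with $\Theta(u,v) = 2\log\frac1{|u-v|} + O(1)$ near its diagonal and rapid decay away from it. Matching the leading logarithms of the two profiles then yields $|D_n(x,y)| \le K$ uniformly, which is \eqref{ehto1}. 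The diagonal case $x=y$ of this bound is exactly the statement $2\pi\sum_{j=0}^n\sum_k\nu_{j,k}(x)^2 = 2n\log 2 + O(1)$, matching $h_{n\log 2}(0)$, and explains why $\sqrt{n\log 2}$ is the correct normalization.

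With \eqref{ehto1} and \eqref{ehto2} established, Theorem~\ref{thm:uniqueness} yields that $\mu_{4,n}$ converges in distribution to the same limit $\mutorus{\beta}$ as $\widetilde{\mu}_n$; the subcritical case $\beta < 1$ is identical since every covariance is merely scaled by $\beta^2$. The strengthening to convergence in probability claimed in Theorem~\ref{co:1} is postponed to the final step via Theorem~\ref{thm:convergence_in_probability}. I expect the self-similar rescaling estimate controlling $T_n$ uniformly near the diagonal --- in particular making the periodization and scale-truncation errors bounded uniformly in $x$ so that the two near-diagonal profiles cancel --- to be the main technical obstacle; the off-diagonal condition \eqref{ehto2} and the reduction itself are routine given the preceding lemmas.
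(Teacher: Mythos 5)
Your overall reduction is the same as the paper's: compare $X_{4,n}$ with the white noise field cut at $t_n=n\log 2$ (i.e.\ $X_{1,2^n}$, so the common normalizing sequence $\sqrt{n\log 2}\,dx$ is legitimate --- you are actually more explicit about this than the paper), handle \eqref{ehto2} via the off-diagonal tail bound of Lemma~\ref{lemma:vaguelet_product_sum} together with the exact white noise formula, and conclude by Theorem~\ref{thm:uniqueness}. The gap is in your treatment of \eqref{ehto1} near the diagonal. Your argument rests on the claim that the rescaled tail profile $\Theta(u,v)=2\pi\sum_{j\ge 0}\Phi(2^ju,2^jv)$, with $\Phi(u,v)=\sum_{k\in\integers}\nu(u-k)\nu(v-k)$, satisfies $\Theta(u,v)=2\log\frac{1}{|u-v|}+O(1)$ near its diagonal. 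This is not a consequence of dyadic self-similarity plus the decay estimates: the claim amounts to $2\pi\sum_{j=0}^{J}\Phi(2^ju,2^jv)=2J\log 2+O(1)$ uniformly over pairs with $|u-v|\asymp 2^{-J}$, and the individual near-diagonal terms $2\pi\Phi(w,w')$ need \emph{not} be close to $2\log 2$, since $\Phi(w,w)=\sum_{k}\nu(w-k)^2$ is in general a non-constant $1$-periodic function. So the boundedness of the error is a global cancellation along dyadic orbits (a coboundary-type statement) that term-by-term estimation cannot produce; it is in fact exactly the near-diagonal statement you are trying to prove, transplanted to the real line. The self-similar rescaling relabels the difficulty rather than resolving it.

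The paper circumvents this by never evaluating any near-diagonal sum. First, the covariance is shown to be \emph{flat} across the strip $\{\dist(x,y)\le 2^{-n}\}$: by Lemma~\ref{lemma:half_integral_estimate} (applied with $f=\psi_{j,k}$, so $|\nu_{j,k}(x)-\nu_{j,k}(y)|\le CD\,2^{j/2}\sqrt{|x-y|}$) and Lemma~\ref{lemma:vaguelet_abs_sum}, one gets $|C_n(x,x)-C_n(x,y)|\le 2\pi\sum_{j=0}^{n}\sum_{k}|\nu_{j,k}(x)|\,|\nu_{j,k}(x)-\nu_{j,k}(y)|\lesssim \sqrt{|x-y|}\sum_{j=0}^n 2^{j/2}=O(1)$ there. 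Second, the value is \emph{anchored} at the edge of the strip: at $\dist(x,y)=2^{-n}$ Lemma~\ref{lemma:vaguelet_product_sum} bounds the tail by $2\pi B$, and your ``central identity'' (which the paper justifies by combining the distributional convergence of $C_n$ with its uniform off-diagonal convergence, rather than asserting it pointwise) pins $C_n(x,x+2^{-n})$ to $4\log 2+2\log\frac{1}{2\sin(\pi 2^{-n})}+O(1)=2n\log 2+O(1)$. Flatness plus anchoring then gives $C_n=2n\log 2+O(1)$ on the whole strip, matching the explicit white noise covariance and yielding \eqref{ehto1}. If you wanted to salvage the profile approach, the only apparent way to prove the asymptotics of $\Theta$ is to run this same flatness-plus-anchoring argument on the line, at which point the rescaling detour is superfluous; I would therefore replace your near-diagonal step by the paper's argument.
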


\begin{proof}
  The covariance $C_n(x,y)$ of the field $X_{4,n}$ is given by
  \[C_n(x,y) = 4 \log 2 + 2\pi \sum_{j=0}^n \sum_{k=0}^{2^j - 1} \nu_{j,k}(x) \nu_{j,k}(y).\]

  Let $\psi_{j,k}$ be the periodized wavelets. Then there exists a constant $D > 0$ such that $\|\psi_{j,k}\|_\infty \le D 2^{j/2}$ for all $j \ge 0$, $0 \le k \le 2^j - 1$.
  It follows from Lemma~\ref{lemma:half_integral_estimate} and Lemma~\ref{lemma:vaguelet_abs_sum} that when $\dist(x,y) \le 2^{-n}$, we have
  \begin{align}\label{eq:vaguelet_covariance_diagonal}
    |C_n(x,x) - C_n(x,y)| \le & 2\pi \sum_{j=0}^n \sum_{k=0}^{2^j - 1} |\nu_{j,k}(x)| |\nu_{j,k}(x) - \nu_{j,k}(y)| \\
    \le & 2\pi C \sqrt{|x-y|} \sum_{j=0}^n \sum_{k=0}^{2^j - 1} |\nu_{j,k}(x)| \|\psi_{j,k}\|_\infty \nonumber \\
    \le & 2\pi A C D \sqrt{|x-y|} \sum_{j=0}^n 2^{j/2} \le E. \nonumber
  \end{align}
  for some constant $E > 0$. It follows from Lemma~\ref{lemma:vaguelet_product_sum} that for any $\varepsilon > 0$ the covariances $C_n(x,y)$ converge uniformly in the set $V_\varepsilon = \{(x,y) : \dist(x,y) \ge \varepsilon\}$. Obviously by definition there is a distributional convergence to the right covariance $4 \log 2 + 2 \log \frac{1}{2 |\sin(\pi (x-y))|}$ and this must agree with the uniform limit in $V_\varepsilon$. Especially, by invoking again the bound from Lemma~\ref{lemma:vaguelet_product_sum} we deduce that
  \begin{equation}\label{eq:vaguelet_covariance_at_point}
    |C_n(x,x + 2^{-n}) - 4 \log 2 - 2 \log \frac{1}{2\sin(\pi 2^{-n})}| \le 2\pi B.
  \end{equation}
  Thus by combining \eqref{eq:vaguelet_covariance_diagonal} and \eqref{eq:vaguelet_covariance_at_point} the covariance satisfies
  \[|C_n(x,y) - 2 n \log 2| \le F \quad \text{for all } (x,y) \in \{(x,y) : \dist(x,y) \le 2^{-n}\}\]
  for some constant $F > 0$.
  From the known behaviour (see e.g.\ the end of the proof of Lemma~\ref{lemma:fourier_limit}) of the covariance of the white noise field $X_{1,n}$ it is now easy to see that the assumptions of Theorem~\ref{thm:uniqueness} are satisfied for the pair $(X_{4,n})$ and $(X_{1,n})$.
\end{proof}

Finally we observe that the convergence in lemmas~\ref{lemma:fourier_limit},~\ref{lemma:convolution_limit} and~\ref{lemma:vaguelet_limit} also takes place weakly in $L^p$.
\begin{lemma}\label{lemma:convg_in_prob}
  The convergences stated in lemmas~\ref{lemma:fourier_limit},~\ref{lemma:convolution_limit} and~\ref{lemma:vaguelet_limit} take place in $L^p$ for $0 < p < 1$ {\rm (}especially in probability{\rm )}.
\end{lemma}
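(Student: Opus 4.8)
The plan is to obtain all three statements from Theorem~\ref{thm:convergence_in_probability}, using the white noise approximation of Lemma~\ref{lemma:white_noise_limit} as the comparison sequence that simultaneously has independent increments and converges in probability, and then to upgrade the resulting convergence in probability to weak $L^p$ convergence by a uniform integrability argument based on Kahane's inequality. Throughout I would take the underlying field $X$ to be realized through the white noise expansion and view $X_{2,n}, X_{3,n}, X_{4,n}$ as regularizations $R_n X$ of this single field, so that the limit produced by the theorem is automatically the common measure $\mutorus{\beta}$ of Lemma~\ref{lemma:white_noise_limit}.

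First I would check that the fields $X_{1,n}(x) = \sqrt{2}\,\Wper(H_{\log n}(x))$ form an admissible comparison sequence. Since $H_{\log n}(x) = H(x) \cap \{y > 1/n\}$, the increment $X_{1,n+1} - X_{1,n}$ is, as a function of the base point, the white noise mass of $H(x) \cap \{\tfrac{1}{n+1} < y \le \tfrac{1}{n}\}$; these hyperbolic slabs are disjoint for distinct $n$, and the white noise assigns independent masses to disjoint sets, so the increments $\{X_{1,n+1} - X_{1,n}\}$ are independent. Moreover Lemma~\ref{lemma:white_noise_limit} gives weak $L^p$ convergence of the associated chaos to $\mutorus{\beta}$, hence convergence in probability of $e^{\beta X_{1,n} - \frac{\beta^2}{2}\E[X_{1,n}^2]}\,d\rho_n$ to $\mutorus{\beta}$. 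Thus the hypotheses on the comparison sequence in Theorem~\ref{thm:convergence_in_probability} hold with this sequence and limit field $X = \lim_k X_{1,k}$.

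Next I would realize each approximation as $R_n X$ for a linear regularization process in the sense of Definition~\ref{def:linear_reg}: for $X_{2,n}$ take $R_n$ to be truncation onto the Fourier modes of order $\le n$ together with the constant, for $X_{3,n}$ take $R_n = \varphi_{1/n} * \,\cdot\,$, and for $X_{4,n}$ take $R_n$ to be the partial sum over scales $j \le n$ of the periodized vaguelet expansion. Condition (2) of Definition~\ref{def:linear_reg} holds in each case because $R_n X_{1,k}$ depends continuously on finitely many (respectively rapidly decaying) linear functionals of the field and $X_{1,k} \to X$; for the vaguelet case the scale sums are controlled by Lemmas~\ref{lemma:vaguelet_periodized_decay} and~\ref{lemma:vaguelet_abs_sum}. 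Condition (1), the uniform convergence $R_n f \to f$ on H\"older functions, follows from the Dini--Lipschitz criterion in the Fourier case, from the standard mollifier estimate in the convolution case, and again from the vaguelet decay estimates in the last case. The distributional convergence of the corresponding chaos to $\mutorus{\beta}$ is precisely the content of Lemmas~\ref{lemma:fourier_limit},~\ref{lemma:convolution_limit} and~\ref{lemma:vaguelet_limit}. Applying Theorem~\ref{thm:convergence_in_probability} then yields convergence in probability of $\mu_{j,n}(f)$ to $\mutorus{\beta}(f)$ for every $f \in C(\T)$ and $j = 2,3,4$.

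Finally I would upgrade this to convergence in $L^p$ for $0 < p < 1$ by uniform integrability. Fix $p < p' < 1$ and a nonnegative $f \in C(\T)$. By \eqref{ehto1}, adding an independent standard Gaussian multiplied by $\beta\sqrt{K}$ to $\beta X_{j,n}$ makes its covariance dominate that of $\beta X_{1,n}$ pointwise, so Kahane's convexity inequality applied to the concave map $x \mapsto x^{p'}$, after factoring out the genuinely independent multiplier $e^{\beta\sqrt{K}G}$, gives the uniform bound
\[\E[\mu_{j,n}(f)^{p'}] \le e^{\beta^2 K p'(1-p')/2}\,\E[\mu_{1,n}(f)^{p'}].\]
The right-hand side is bounded in $n$ since $\mu_{1,n}(f) \to \mutorus{\beta}(f)$ in $L^{p'}$ by Lemma~\ref{lemma:white_noise_limit}, and the case of general $f$ follows from $|\mu_{j,n}(f)| \le \mu_{j,n}(|f|)$. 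A uniform $L^{p'}$ bound with $p' > p$ furnishes uniform integrability of $|\mu_{j,n}(f)|^p$, so convergence in probability improves to weak $L^p$ convergence, giving the claim. I expect the main obstacle to be the careful verification of condition (1) of Definition~\ref{def:linear_reg}, i.e.\ the uniform convergence of the regularizations on H\"older functions, in the Fourier and vaguelet cases, together with setting up the coupling in which all three families are realized as regularizations of the single white-noise field so that the weak $L^p$ convergence is genuinely to the common limit $\mutorus{\beta}$.
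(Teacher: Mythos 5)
Your overall route is exactly the paper's: take $X_{1,n}$ (independent increments via disjoint white-noise slabs, convergence in probability via Lemma~\ref{lemma:white_noise_limit}) as the comparison sequence in Theorem~\ref{thm:convergence_in_probability}, realize the Fourier, convolution and vaguelet approximations as linear regularization processes $R_n$ applied to the same white-noise field, and then upgrade to weak $L^p$ convergence by combining the convergence in probability with a uniform $L^{p'}$ bound ($p<p'<1$) obtained from Kahane's inequality and the bounded covariance difference \eqref{ehto1}. Your Kahane computation, including the factored constant $e^{\beta^2 K p'(1-p')/2}$, is correct and matches the paper's (sketched) final step, and your choices of $R_n$ in the Fourier and convolution cases coincide with the paper's (Dini--Lipschitz versus Jackson's theorem is an immaterial difference).

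The one genuine gap is your treatment of condition (1) of Definition~\ref{def:linear_reg} in the vaguelet case. You assert that the uniform convergence $R_nf\to f$ for H\"older $f$ ``follows from the vaguelet decay estimates,'' but Lemmas~\ref{lemma:vaguelet_periodized_decay} and~\ref{lemma:vaguelet_abs_sum} only control the size of the basis functions $\nu_{j,k}$; they say nothing about the size of the \emph{coefficients} of $f$ in the vaguelet system. Since the $\nu_{j,k}$ are not orthonormal, the relevant partial sum operator is
\[
R_nf(x)=\int_0^1 f+\sum_{j=0}^n\sum_{k=0}^{2^j-1}\Big(\int_0^1\psi_{j,k}(y)\big(I^{-1/2}f(y)\big)\,dy\Big)\nu_{j,k}(x),
\]
and uniform convergence requires the coefficient decay $\big|\int_0^1\psi_{j,k}\,(I^{-1/2}f)\big|\lesssim 2^{-\alpha j}$ for $f\in C^\alpha$. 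This is the only nontrivial analytic step in the whole proof: the paper obtains it by rewriting $I^{-1/2}$ via the identity $\frac{d}{dx}=-iHI^{-1}$, using that $HI^{1/2}f\in C^{\alpha+1/2}$ (mapping properties of $I^\beta$ and boundedness of the Hilbert transform on H\"older spaces), and then integrating by parts against $d\psi'$, which is exactly where the bounded-variation hypothesis \eqref{eq:wavelet_derivative_decay} on the wavelet enters. Your argument never invokes that hypothesis, which signals the omission; without this coefficient estimate the sum over scales $j\le n$ cannot be shown to converge uniformly, and Theorem~\ref{thm:convergence_in_probability} cannot be applied to $X_{4,n}$. The rest of your proposal stands as written.
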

\begin{proof}
  We only prove the claim in the critical case since the subcritical case is similar.
  We will use the fields $X_{1,n}$ as the fields $X_n$ in Theorem~\ref{thm:convergence_in_probability}.
Then according to Lemma~\ref{lemma:white_noise_limit} we have that $e^{X_n - \frac{1}{2}\E[X_n^2]} \, d\rho_n$ converges in probability to a measure $\mutorus{1}$ when $d\rho_n = \sqrt{\log n} \, dx$.

  In the case of the Fourier approximation we can define $R_n$ in Theorem~\ref{thm:convergence_in_probability} to be the $n$th partial sum of the Fourier series. That is
  \[R_n f := \sum_{k=-n}^n \widehat{f}(k) e^{2\pi i k x}.\]
  Recalling Jackson's theorem on the uniform convergence of Fourier series of H\"older continuous functions, it is straightforward to check that $R_n$ is a linear regularization process.

  In the case of convolutions we take $R_n$ to be the convolution against $\frac{1}{\varepsilon_n} \varphi(\frac{x}{\varepsilon_n})$, where $(\varepsilon_n)_{n \ge 1}$ is a sequence of positive numbers tending to $0$. The sequence $(R_n)$ obviously satisfies the required conditions.

Finally, we sketch the proof for the vaguelet approximations. This time we employ the  sequence of operators
   \[
     R_n f (x) :=\int_0^1f+\sum_{j=0}^n\sum_{k=0}^{2^j-1}\left(\int_0^1\psi_{j,k}(y)\big(I^{-1/2}f(y)\big)dy\right)\nu_{j,k}(x).
   \]
   Because of finiteness of the defining series it is easy to see that $(R_n)$ satisfies the second  condition in Definition~\ref{def:linear_reg}. For the first condition we first fix $\alpha \in (0,1/2)$ and observe that $R_n \nu_{j',k'} = \nu_{j',k'}$ as soon as $n \ge j'$. By the density of vaguelets, in order to verify the first condition it is enough to check that the remainder term tends uniformly to $0$ for any $f \in C^\alpha(S^1)$. We begin by noting that $\frac{d}{dx}=- i H I^{-1}$, where $H$ is the Hilbert transform, which yields for $f\in C^\alpha(S^1)$ 
   \[
  \Big| \int_0^1\psi_{j,k}(y)\big(I^{-1/2}f(y)\big)\Big| = \Big| \int_0^1\frac{d}{dy}\psi_{j,k}(y)\big(HI^{+1/2}f(y)\big)\Big|\leq C2^{-\alpha j},\qquad x\in [0,1),
   \]
   since $HI^{+1/2}f(x)\in C^{\alpha +1/2}(S^1)$ by the standard mapping properties of $I^\beta$, and  the Hilbert transform is bounded on any of the $C^\alpha$-spaces. Above, the final estimate was obtained by computing for any $g \in C^{\alpha + 1/2}(S^1)$ with periodic continuation $G$ to $\reals$ that
   \begin{align*}
     \left|\int_0^1 \frac{d}{dx}\psi_{j,0}(x) g(x)\right| & = \left|\int_{-\infty}^\infty 2^{\frac{3}{2}j} d\psi'(2^j x) G(x)\right| = 2^{j/2} \left|\int_{-\infty}^\infty d\psi'(x) (G(2^{-j} x) - G(0))\right| \\
                                                          & \le 2^{j/2} \int_{-\infty}^\infty |d\psi'(x)| (2^{-j} x)^{\alpha + 1/2} \le 2^{-\alpha j} \int_{-\infty}^\infty |d\psi'(x)| (1 + |x|).
   \end{align*}
   The last integral is finite by the assumption \eqref{eq:wavelet_derivative_decay}. Together with Lemma~\ref{lemma:vaguelet_abs_sum} this obviously yields the desired uniform convergence.

   The proofs of the lemmas~\ref{lemma:fourier_limit},~\ref{lemma:convolution_limit} and~\ref{lemma:vaguelet_limit} show that the covariances stay at a bounded distance from the covariance of the field $X_{1,n}$, and therefore a standard application of Kahane's convexity inequality gives us an $L^p$ bound. Combining this with Theorem~\ref{thm:convergence_in_probability} yields the result.
\end{proof}

As noted in the beginning of this section, having proved all the lemmas above we may conclude the proof of Theorem~\ref{co:1}.
\end{proof}

\begin{remark}
  In the case of vaguelet approximations we may also rewrite
  \[X(x) = \sum_{i=1}^\infty \widetilde{A}_i \widetilde{\nu}_i(x),\]
  where $\widetilde{A}_i$ and $\widetilde{\nu}_i$ are the random coefficents and vaguelets appearing in \eqref{eq:X4n_definition} ordered in their natural order. The convergence and uniqueness then also holds for the chaos constructed from the fields
  \[\widetilde{X}_{4,n} := \sum_{i=1}^n \widetilde{A}_i \widetilde{\nu}_i(x),\]
  with the normalizing measure $d\rho_n(x) = \sqrt{\log n} \, dx$.
\end{remark}

\begin{remark}
  There are many interesting questions that we did not touch in this paper. For example (this question is due to Vincent Vargas), it is natural to ask whether the convergence or uniqueness of the derivative martingale\cite{duplantier2014critical} depends on the approximations used.
\end{remark}

\end{document}